\newtheorem{theorem}{Theorem}[section]
\newtheorem{lemma}[theorem]{Lemma}
\newtheorem{corollary}[theorem]{Corollary}
\newtheorem{proposition}[theorem]{Proposition}
\theoremstyle{definition}
\newtheorem{definition}[theorem]{Definition}
\theoremstyle{remark}
\newtheorem{remark}[theorem]{Remark}  
\numberwithin{equation}{section}
\begin{document}

	\title{On Talagrand's functional and generic chaining}

	
	\author{Yiming Chen}
	\address{Institute for Financial Studies, Shandong University,  Jinan,  250100, China.}
	\email{chenyiming960212@mail.sdu.edu.cn}

	\author{Pengtao Li}
	\address{Department of Mathematics, University of Southern California, Los Angeles, CA 90089, United States.}
	\email{pengtaol@usc.edu}

	\author{Dali Liu}
	\address{Department of Statistics and Probability, Michigan State University, East Lansing, MI 48824, United States.}
	\email{liudali@msu.edu}

	\author{Hanchao Wang }
	\address{Institute for Financial Studies, Shandong University,  Jinan,  250100, China.}
	\email{wanghanchao@sdu.edu.cn}
	

	\subjclass[2020]{60E15;  60G17;  60G50}

	\date{}

	\keywords{$\varphi$-sub-Gaussian distribution; Generic chaining; Tail probability inequality;  Compressed sensing; Johnson-Lindenstrauss lemma; Order 2 Gaussian chaos. }

	\begin{abstract}
		In the study of the supremum of stochastic processes, Talagrand's chaining functionals and his generic chaining method are heavily related to the distribution of stochastic processes. In the present paper, we construct Talagrand's type functionals in the general distribution case and obtain the upper bound for the suprema of all $p$-th moments of the stochastic process using the generic chaining method. As applications, we obtained the Johnson-Lindenstrauss lemma, the upper bound for the supremum of all $p$-th moment of order 2 Gaussian chaos, and convex signal recovery in our setting.
		
	\end{abstract}

	\maketitle

	\section{Introduction}\label{sec-Intro}
	
	A family of random variables $(X_t)_{t\in T}$ is referred to as a stochastic process, where $T$ is the index set. The main objective of this paper is to investigate
	$$\left(\textsf{E}\sup_{t\in T}|X_t|^p\right)^{1/p}=:\sup\left\{\left(\textsf{E}\sup_{t\in F}|X_t|^p\right)^{1/p}, F\subset T, F ~\text{is finite}\right\},$$
	where $p\ge 1$. Prior to this paper, several authors have discussed $\mathsf{E}\sup_{t\in T}X_t$, and outstanding works have been done by Dudley \cite{RD}, Fernique \cite{XF}, Talagrand \cite{t0,t1,MT}, van Handel \cite{RV}, among others. It is worth mentioning that Talagrand developed a comprehensive framework, including Talagrand chaining functionals and generic chaining techniques, to handle the boundness of stochastic processes. Under the assumption that $(X_t)_{t\in T}$ is a Gaussian process, Talagrand \cite{t1} derived
	\begin{eqnarray}\label{maj}
		\frac{1}{L}\gamma_2(T,d)\le \mathsf{E}\sup_{t\in T}X_t\le L \gamma_2(T,d),
	\end{eqnarray}
	where $d$ is a distance defined on $T$, $\gamma_2(T,d)$ is the so-called Talagrand's chaining functional, and $L$ is a constant.  Result (\ref{maj}) is also known as the majorizing measure theorem.
	
	In fact, it can be observed that the proof of the upper bound in (\ref{maj}) heavily relies on:
	\begin{equation}
		\label{sub}
		\mathsf{P}(|X_t-X_s|\ge u)\le 2\exp\left(-\frac{u^2}{2d^2(s,t)}\right),
	\end{equation}
	where $u>0$. Furthermore, for $u>0$, if $(X_t)_{t\in T}$ satisfies the following condition:
	\begin{equation}
		\label{b}
		\mathsf{P}(|X_t-X_s|\ge u)\le 2\exp\left(-\min\left\{\frac{u^2}{d_2^2(s,t)}, \frac{u}{d_1(s,t)}\right\}\right),
	\end{equation}
	with $d_1$ and $d_2$ two distances defined on $T$,
	Talagrand \cite{t2} proved that
	\begin{eqnarray}\label{bj}
		\mathsf{E}\sup_{t\in T}X_t\le L_1 \gamma_2(T,d_2)+ L_2 \gamma_1(T,d_1),
	\end{eqnarray}
	where $L_1$, $L_2$ are constants, and $\gamma_1(T,d_1)$, $\gamma_2(T,d_2)$ are Talagrand chaining functionals.
	
	(\ref{maj}) and (\ref{bj}) inspired us to consider that Talagrand's chaining functionals may depend on the distribution of $X$. The main contribution of this article is to construct distribution-dependent Talagrand-type chaining functionals under the assumption that $(X_t)_{t\in T}$ satisfies a general distribution condition. Using generic chaining techniques, we obtain an upper bound for $(\mathsf{E}\sup_{t\in T}|X_t|^p)^{1/p}$. Our results indicate that the Talagrand-type chaining functionals depend not only on the geometric of  $T$ but also heavily on the distributions of $(X_t)_{t\in T}$. We establish a series of theoretical properties for the constructed Talagrand-type chaining functionals and derive upper bounds for $(\mathsf{E}\sup_{t\in T}|X_t|^p)^{1/p}$ in various scenarios. It should be noted that studying the upper bound of $(\mathsf{E}\sup_{t\in T}|X_t|^p)^{1/p}$ has its own theoretical significance and can further lead to tail probability inequalities for $\sup_{t\in T}X_t$.                                       
	
	This paper considers the case where $X_t$ obeys $\varphi$-sub-Gaussian distributions. $\varphi$-sub-Gaussian distribution is introduced by Kozachenko and Ostrovsky \cite{KO1985}, as a generation of sub-Gaussian distribution.  As an essential part of high-dimensional probability theory, the properties and various applications (such as random matrix theory, stochastic optimization, and statistical learning theory) of sub-Gaussian distribution have been well studied. Some details of sub-Gaussian distribution can be found in Vershynin \cite{vr}. Dirksen extended the sub-Gaussian case to some special cases in   \cite{SD}, and also discussed the upper bound of $(\textsf{E}\sup_{t\in T}|X_t|^p)^{1/p}$. Compared to Dirksen \cite{SD},  our results are more general.
	
	We write $a\lesssim b$ for convenience if a universal constant $C$ satisfies $a\le Cb$. Additionally, we use the  symbol $a\asymp b$ if $a\lesssim b$ and $b\lesssim a$. The structure of the paper is as follows.   The preliminaries on $\varphi$-sub-Gaussian distributions are given in  Section 2. Section 3 introduces distribution-dependent Talagrand-type chaining functionals and presents their theoretical properties. The main results and proofs are presented in Section  4. Section  5 provides some examples and applications of our main results.  Some discussions are presented in Section 6.

	\section{$\varphi$-sub-Gaussian distribution}
	
	Let us begin with the definition of sub-Gaussian distribution.  For a random variable $\xi$ and $\lambda \in \mathbb{R}$, if 
	$$\textsf{E} \exp(\lambda \xi)\le  \exp\left(\lambda^2/2\right),$$
	we say that $\xi$ have sub-Gaussian distribution or $\xi$ is a sub-Gaussian random variable.  If $\{\xi_i\}_{1\le i\le n }$ are sub-Gaussian variables,  we can obtain
	\begin{eqnarray*}
		\textsf{E}\max_{1\le i\le n } \xi_i&\le & \inf_{\lambda>0} \frac{1}{\lambda}  \log \textsf{E}\exp(\lambda\max_{1\le i\le n } \xi_i)\\
		&\le & \inf_{\lambda>0} \frac{1}{\lambda}  \log \sum_{1\le i\le n}\textsf{E}\exp(\lambda \xi_i)\\
		&\le & \inf_{\lambda>0} \frac{1}{\lambda}  \left(\log n+\frac{\lambda^2}{2}\right)=\sqrt{2\log n},
	\end{eqnarray*}
	by Chernoff's approach.  The convexity of $\frac{x^2}{2}$ plays an essential role in this approach.  Kozachenko and Ostrovsky extended $\frac{x^2}{2}$ to more general cases in \cite{KO1985} and introduced the  $\varphi$-sub-Gaussian distribution.   $\varphi$-sub-Gaussian distribution is widely used in many application scenarios. Antonini and Hu \cite{AHKV} present an application to random Fourier series for $\varphi$-sub-Gaussian random variables. In the sampling theorem, Kozachenko and Olenko \cite{KO}  investigated $L_p([0, T])$ and uniform approximations of $\varphi$-sub-Gaussian random processes. Moreover, they also present the results on the Whittaker-Kotelnikov-Shannon (WKS) approximation of $\varphi$-sub-Gaussian signals in $L_p([0, T])$ with a given accuracy and reliability \cite{KO2016}. 

	This section introduces the definition and notations of $\varphi$-sub-Gaussian distribution. Some useful lemmas and properties are also collected in this section.  More details can be found in  Buldygin and Kozachenko  \cite{BK}.

	\begin{definition} A function $\varphi(x), x \in \mathbb{R}$, is called an Orlicz $N$-function if it is continuous, even, convex with $\varphi(0)=0$ and monotonically increasing in the set $x>0$, satisfying
		\begin{itemize}
			\item $\varphi(x)/x\rightarrow 0$, when $x\rightarrow0$; 
			\item $\varphi(x) / x \rightarrow \infty$, when $x \rightarrow \infty$.
		\end{itemize}
	\end{definition}
	
	\begin{definition} An Orlicz $N$-function $\varphi$ satisfies  $Q-$condition, if  there exist constant $c>0$,  such that
		$$
		\liminf _{x \rightarrow 0} \frac{\varphi(x)}{x^2}=c.
		$$
	\end{definition}

	\begin{remark}
		The following are some useful examples of Orlicz $N$-functions. \\
		1) $\varphi(x)=a|x|^\alpha, x \in \mathbb{R} ; a>0, \alpha>1$;\\
		2) $\varphi(x)=c\left(\exp \left\{|x|^\alpha\right\}-1\right), x \in \mathbb{R} ; c>0, \alpha>1$;\\
		3) $\varphi(x)=\exp \{|x|\}-|x|-1, x \in \mathbb{R}$.
	\end{remark}
	
	Now, we present two useful lemmas on  Orlicz $N$-functions, which can be found in Buldygin and Kozachenko \cite{BK}.
	
	\begin{lemma}\label{book}
		For any Orlicz $N$-function $\varphi$, the following results hold:
		\begin{itemize}
			\item for $\beta>1$, $\varphi(\beta x)\geq\beta\varphi(x)$ for $x\in\mathbb{R}$;
			\item there exists a constant $c=c(\varphi)>0$, such that $\varphi(x)>cx$ for $x>1$;
			\item the function $g(x)=\frac{\varphi(x)}{x}$ is monotone nondecreasing in $x$ for all $x>0$;
			\item $\varphi(x)+\varphi(y)\leq\varphi(|x|+|y|)$ for $x,y\in\mathbb{R}$.
		\end{itemize}
	\end{lemma}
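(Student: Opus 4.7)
The plan is to derive all four items as essentially elementary consequences of convexity together with the normalization $\varphi(0)=0$; the deeper $N$-function growth conditions at zero and infinity play no role here. I would begin with item three, the monotonicity of $g(x)=\varphi(x)/x$ on $x>0$, since both items one and two follow easily from it. For $0<x<y$, the decomposition $x=\tfrac{x}{y}\,y+\bigl(1-\tfrac{x}{y}\bigr)\cdot 0$ combined with convexity and $\varphi(0)=0$ yields $\varphi(x)\le \tfrac{x}{y}\varphi(y)$, i.e.\ $g(x)\le g(y)$. Item one then drops out as the specialization $y=\beta x$ with $\beta>1$: after rearrangement one gets $\varphi(\beta x)\ge \beta\varphi(x)$ for $x>0$, and the extension to $x\le 0$ is handled by evenness.

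For item two, I would apply the monotonicity of $g$ on $[1,\infty)$: for any $x>1$, $\varphi(x)/x\ge \varphi(1)$. Since $\varphi$ is continuous, strictly increasing on $(0,\infty)$, and vanishes at $0$, one has $\varphi(1)>0$, so taking $c=\varphi(1)/2$ (or any positive constant strictly less than $\varphi(1)$) delivers $\varphi(x)>cx$ for all $x>1$. Item four, the superadditivity $\varphi(x)+\varphi(y)\le \varphi(|x|+|y|)$, reduces by evenness to the case $x,y\ge 0$. Writing $s=x+y$ (the case $s=0$ is trivial), the convex combinations $x=\tfrac{x}{s}\,s+\tfrac{y}{s}\cdot 0$ and $y=\tfrac{y}{s}\,s+\tfrac{x}{s}\cdot 0$, together with convexity and $\varphi(0)=0$, give $\varphi(x)+\varphi(y)\le \tfrac{x}{s}\varphi(s)+\tfrac{y}{s}\varphi(s)=\varphi(s)$.

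The main, and indeed only, point of care is ordering the four items correctly so that the star-shaped identity $\varphi(x)\le (x/y)\varphi(y)$ for $0<x<y$, which is the engine driving items one, two, and three, is established first. Once that is in hand no serious obstacle remains; even the strict inequality in item two is handled by the harmless slack $c=\varphi(1)/2$ rather than $c=\varphi(1)$, and the only edge cases ($x=0$ or $s=0$) are immediate.
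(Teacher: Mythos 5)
Your proof is correct. The paper does not prove this lemma at all --- it is quoted from Buldygin and Kozachenko \cite{BK} --- so there is no in-paper argument to compare against; your derivation is the standard one, resting entirely on the star-shape inequality $\varphi(x)\le (x/y)\varphi(y)$ for $0<x<y$ obtained from convexity and $\varphi(0)=0$, with evenness handling negative arguments and the slack $c=\varphi(1)/2$ correctly securing the strict inequality in the second item (note that $\varphi(1)>0$ follows from $\varphi$ being increasing on $x>0$ with $\varphi(0)=0$).
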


	\begin{lemma}\label{inv}
		Denoting the inverse function of $\varphi(\cdot)$ by $\varphi^{(-1)}(\cdot)$, then for an Orlicz $N-$function $\varphi(\cdot)$, the following holds true:
		\begin{itemize}
			\item $\varphi^{(-1)}(\beta x)\leq\beta\varphi^{(-1)}(x),\quad x>0,$ for $\beta>1$;
			\item $\varphi^{(-1)}(\beta x)\geq\beta\varphi^{(-1)}(x),\quad x>0,$ for $\beta\leq1$.
		\end{itemize}
	\end{lemma}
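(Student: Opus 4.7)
The plan is to reduce both inequalities to the first bullet of Lemma \ref{book}, namely $\varphi(\beta x) \geq \beta \varphi(x)$ for $\beta > 1$, by transporting the inequality through the inverse. Since $\varphi$ is continuous and strictly increasing on $[0,\infty)$ (it is convex with $\varphi(0)=0$ and $\varphi(x)/x \to \infty$), the inverse $\varphi^{(-1)}$ is well-defined, continuous, and strictly increasing on $[0,\infty)$, so applying $\varphi^{(-1)}$ preserves inequalities.

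For the first bullet, I would fix $x>0$ and set $y = \varphi^{(-1)}(x)$, so that $\varphi(y) = x$. By Lemma \ref{book} applied with this $y$, for $\beta>1$ one has
\begin{equation*}
\varphi(\beta y) \;\geq\; \beta \varphi(y) \;=\; \beta x.
\end{equation*}
Applying the monotone increasing function $\varphi^{(-1)}$ to both sides yields $\beta y \geq \varphi^{(-1)}(\beta x)$, i.e.\ $\beta \varphi^{(-1)}(x) \geq \varphi^{(-1)}(\beta x)$, which is the first inequality.

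For the second bullet, I would reduce to the first by a change of variable. Given $0<\beta\leq 1$, the case $\beta=1$ is trivial, so assume $\beta<1$ and put $\gamma = 1/\beta >1$ and $u = \beta x$. By the already-proved first bullet applied to $\gamma$ and $u$,
\begin{equation*}
\varphi^{(-1)}(\gamma u) \;\leq\; \gamma\, \varphi^{(-1)}(u),
\end{equation*}
which is exactly $\varphi^{(-1)}(x) \leq \tfrac{1}{\beta}\varphi^{(-1)}(\beta x)$; multiplying through by $\beta>0$ gives $\beta \varphi^{(-1)}(x) \leq \varphi^{(-1)}(\beta x)$, as required.

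The proof is essentially routine once one recognizes the duality between $\varphi$ and $\varphi^{(-1)}$; the only subtlety worth stating carefully is the justification that $\varphi^{(-1)}$ exists and is monotone on $[0,\infty)$, so that inequalities may be inverted without sign changes. No real obstacle is anticipated, since the heavy lifting has already been done in Lemma \ref{book}.
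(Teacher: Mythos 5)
Your proof is correct. The paper does not actually prove this lemma --- it simply cites Buldygin and Kozachenko \cite{BK} --- and your argument is the standard one: the first bullet follows by applying $\varphi^{(-1)}$ to the superadditivity property $\varphi(\beta y)\geq\beta\varphi(y)$ from Lemma \ref{book} at $y=\varphi^{(-1)}(x)$, and the second bullet follows from the first by the substitution $\gamma=1/\beta$, $u=\beta x$. Both steps are carried out correctly (the only implicit reading, which is the intended one, is that $\beta\leq 1$ means $0<\beta\leq 1$ so that $\varphi^{(-1)}(\beta x)$ is defined).
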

	
	We need  {\em Young-Fenchel transform} for a given function $f$ to present our results.

	\begin{definition} Let $f=(f(x), x \in \mathbb{R})$ be a real-valued function. The function $f^*=\left(f^*(x), x \in \mathbb{R}\right)$ defined by the formula
		$$
		f^*(x)=\sup_{y \in \mathbb{R}}(x y-f(y)),
		$$
		is called the convex conjugate of $f$, also known as the  Young-Fenchel transform of the function $f$.
	\end{definition}
	
	\begin{remark}
		It is remarkable that the convex conjugate function of an Orlicz $N$-function is also an Orlicz $N$-function. 
	\end{remark}
	
	\begin{remark}
		Here are some examples of the Young-Fenchel transform for some functions.
		\begin{itemize}
			\item Given $a>1, c>0$, let $f(x)=c|x|^a, x \in \mathbb{R}$. Then
			$$
			f^*(x)=c_1|x|^b, \quad x \in \mathbb{R},
			$$
			where $c_1=(c a)^{b / a} / b$ and where $b$ is the conjugate exponent of $a$, namely $1 / b+1 / a=1$. 
			\item In particular, if $f(x)=\frac{1}{2} x^2, x \in \mathbb{R}$, then $f^{*}(x)=\frac{1}{2} x^2$.
			\item Let $f(x)=e^{|x|}-|x|-1, x \in \mathbb{R}$, then
			$
			f^*(x)=(|x|+1) \ln (|x|+1)-|x|, \quad x \in \mathbb{R}.
			$
		\end{itemize}
		\end{remark}

	Now, we present the definition of  $\varphi$-sub-Gaussian distribution.

	\begin{definition} 
		For an Orlicz $N$-function $\varphi$ satisfying $Q-$condition, a zero-mean random variable $\xi$ obeys $\varphi$-sub-Gaussian distribution, if there exists a constant $a\geq 0$ such that the inequality 
		\begin{equation}\label{defin}
			\textsf{E} \exp (\lambda \xi) \leq \exp \left(\varphi\left(\lambda a \right)\right),
		\end{equation}
		holds for all $\lambda \in \mathbb{R}$.
	\end{definition}

	\begin{remark}
		The assumptions that $\xi$ is zero-mean and that $\varphi$ satisfies   $Q-$Condition are actually necessary. As $\lambda\to0$, it is known that
		\begin{align*}
			\textsf{E}\exp{(\lambda\xi)}=1+\lambda\textsf{E}\xi+\frac{\lambda^2}{2}\textsf{E}\xi^2+o(\lambda^2),\\
			\exp{(\varphi(a\lambda))}=1+\varphi(a\lambda)+o(\varphi(a\lambda)).
		\end{align*}
		The inequality (\ref{defin}) implies these two assumptions.
	\end{remark}
	
	A random variable that obeys $\varphi$-sub-Gaussian distribution is also called a $\varphi$-sub-Gaussian random variable. Let $Sub_{\varphi}(\Omega)$ denotes the space of $\varphi$-sub-Gaussian random variables. For a metric space $T$, a stochastic  process, $(X_{t})_{t\in T}$, is called $\varphi$-sub-Gaussian process if $X_t \in Sub_{\varphi}(\Omega)$ for all $t \in T$. 
	
	\begin{lemma}{\textbf{(\cite{BK} Lemma 4.2)}}\label{l2}
		For any random variable $\xi \in \operatorname{Sub}_{\varphi}(\Omega)$, 
		\begin{equation}\label{2p}
			\textsf{E} \exp \{\lambda \xi\} \leq \exp \left\{\varphi\left(\lambda \tau_{\varphi}(\xi)\right)\right\}, \quad \lambda \in \mathbb{R}
		\end{equation}
		holds, where $$
		\tau_{\varphi}(\xi):=\sup _{\lambda \neq 0} \frac{\varphi^{(-1)}(\log \textsf{E} \exp \{\lambda \xi\})}{|\lambda|},
		$$
		and  $\varphi^{(-1)}(\cdot)$ denotes the inverse function of $\varphi(\cdot)$.

	\end{lemma}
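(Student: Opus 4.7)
The plan is to unwind the definition of $\tau_\varphi(\xi)$ and then apply $\varphi$ to both sides of the resulting bound. Before that, two sanity checks are needed. First, I would verify that the supremum defining $\tau_\varphi(\xi)$ is finite: since $\xi\in\operatorname{Sub}_\varphi(\Omega)$, there exists $a\ge 0$ with $\mathsf{E}\exp(\lambda\xi)\le\exp(\varphi(\lambda a))$, so $\log\mathsf{E}\exp(\lambda\xi)\le\varphi(|\lambda|a)$ (using that $\varphi$ is even), whence $\varphi^{(-1)}(\log\mathsf{E}\exp(\lambda\xi))\le|\lambda|a$ and therefore $\tau_\varphi(\xi)\le a<\infty$. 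Second, I would check that $\log\mathsf{E}\exp(\lambda\xi)\ge 0$, so that $\varphi^{(-1)}$ is well-defined on this argument (since $\varphi$ maps $[0,\infty)$ onto $[0,\infty)$). This follows from Jensen's inequality combined with the zero-mean assumption, as $\mathsf{E}\exp(\lambda\xi)\ge\exp(\lambda\mathsf{E}\xi)=1$.

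The main step is then a one-line application of the defining supremum. For every $\lambda\neq 0$,
$$\frac{\varphi^{(-1)}(\log\mathsf{E}\exp(\lambda\xi))}{|\lambda|}\le\tau_\varphi(\xi),$$
hence $\varphi^{(-1)}(\log\mathsf{E}\exp(\lambda\xi))\le|\lambda|\tau_\varphi(\xi)$. Applying $\varphi$, which is nondecreasing on $[0,\infty)$ and even, yields
$$\log\mathsf{E}\exp(\lambda\xi)\le\varphi(|\lambda|\tau_\varphi(\xi))=\varphi(\lambda\tau_\varphi(\xi)),$$
and exponentiating recovers (\ref{2p}). The case $\lambda=0$ is trivial since both sides equal $1$ (because $\varphi(0)=0$).

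There is no serious obstacle here; the only subtle point is the domain check for $\varphi^{(-1)}$, which is handled by the zero-mean assumption built into the definition of $\varphi$-sub-Gaussian. The argument uses $\varphi$ being even and monotone on the positive axis (from the Orlicz $N$-function axioms) to pass from $|\lambda|$ inside $\varphi$ back to $\lambda$, while the $Q$-condition enters only implicitly through the existence half of the definition of $\operatorname{Sub}_\varphi(\Omega)$.
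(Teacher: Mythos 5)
Your proof is correct and complete. The paper does not include a proof of this lemma (it is cited directly from Buldygin and Kozachenko), and your argument is the standard one: unwind the supremum defining $\tau_\varphi(\xi)$ to get $\varphi^{(-1)}(\log\mathsf{E}\exp(\lambda\xi)) \le |\lambda|\tau_\varphi(\xi)$, then apply $\varphi$ and exponentiate, using evenness to replace $|\lambda|$ by $\lambda$. The two preliminary checks you flag — finiteness of $\tau_\varphi(\xi)$ via the existence clause in the definition of $\operatorname{Sub}_\varphi(\Omega)$, and nonnegativity of $\log\mathsf{E}\exp(\lambda\xi)$ via Jensen and the zero-mean assumption so that $\varphi^{(-1)}$ is applied on its domain and $\varphi\circ\varphi^{(-1)}$ is the identity there — are exactly the points that need care, and you handle them properly.
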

	
	\begin{lemma}{\textbf{(\cite{BK} Lemma 4.3)}}
		$Sub_{\varphi}(\Omega)$  is a Banach space with respect to the norm 
		\begin{equation}
			\tau_{\varphi}(X)=\inf \{a \geq 0: \textsf{E} \exp \{\lambda X\} \leq \exp \{\varphi(a \lambda)\}, \,\,\text{for all}\,\,\lambda \in \mathbb{R}\}.
		\end{equation}
		With the norm above, a  $\varphi$-sub-Gaussian process $(X_{t})_{t\in T}$ satisfies the following incremental inequality:
		\begin{equation}\label{2.1}
			\textsf{P}\left(|X_t-X_s| \geqslant u \tau_\varphi(X_t- X_s)\right) \leqslant 2 \exp \left\{-\varphi^*(u)\right\},
		\end{equation}
		where $t,s\in T$.
		
	\end{lemma}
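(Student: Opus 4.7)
The statement bundles two results: the Banach-space assertion (which is essentially \cite{BK}, Lemma 4.3) and the concentration inequality (\ref{2.1}). I would split the proof accordingly and spend most of the effort on (\ref{2.1}), since the first part is standard.

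For the Banach-space claim, my plan is to verify the four norm axioms and completeness in turn. Definiteness $\tau_\varphi(X)=0 \Rightarrow X=0$ a.s.\ uses the $Q$-condition: inserting $\lambda\to 0$ in $\textsf{E}\exp\{\lambda X\}\leq \exp\{\varphi(a\lambda)\}$ and comparing Taylor expansions as in the remark after Definition~2.7 forces $\textsf{E} X^2 \leq c^{-1} a^2$ for every admissible $a$, hence $X=0$ a.s.\ since $\textsf{E} X=0$. Positive homogeneity is a direct rescaling $\lambda \mapsto \lambda/c$ inside the defining infimum. For the triangle inequality I would apply H\"older's inequality to $\textsf{E}\exp\{\lambda(X+Y)\}$ with conjugate exponents $p=(a+b)/a$ and $q=(a+b)/b$ (where $a,b$ are admissible for $X,Y$), then use Lemma~\ref{book} item~1 to show $\frac{1}{p}\varphi(\lambda pa)+\frac{1}{q}\varphi(\lambda qb)\leq \varphi(\lambda(a+b))$. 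Completeness follows because the $Q$-condition makes $\tau_\varphi$-Cauchy sequences automatically $L^2$-Cauchy, and then Fatou's lemma applied to the moment-generating function transfers the uniform bound to the $L^2$-limit.

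For the incremental inequality, I would use Chernoff's method combined with Lemma~\ref{l2}. Write $\xi := X_t-X_s$ and $\tau:=\tau_\varphi(\xi)$. Applied to $\lambda>0$ and $u>0$, Markov's inequality together with (\ref{2p}) gives
\begin{equation*}
\textsf{P}(\xi \geq u\tau)\;\leq\; e^{-\lambda u\tau}\,\textsf{E}\exp\{\lambda\xi\}\;\leq\;\exp\bigl\{\varphi(\lambda\tau)-\lambda u\tau\bigr\}.
\end{equation*}
Substituting $v=\lambda\tau$ and optimizing over $v>0$ yields $\exp\{-\sup_{v>0}(uv-\varphi(v))\}=\exp\{-\varphi^*(u)\}$, where the last equality uses that $\varphi$ is even and vanishes at $0$, so the supremum defining $\varphi^*(u)$ for $u>0$ is attained on $v\geq 0$. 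The negative tail is obtained by applying the same argument to $-\xi$, which is $\varphi$-sub-Gaussian with the same $\tau_\varphi$-norm because $\varphi$ is even; summing the two one-sided bounds produces the factor of $2$.

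The main obstacle is the triangle inequality in the Banach-space part: one must combine H\"older on $\textsf{E}\exp\{\lambda(X+Y)\}$ with the convexity and scaling inequality for $\varphi$ (Lemma~\ref{book}) in a way that matches the infima in the definition of $\tau_\varphi$, and then pass to the infimum over admissible $(a,b)$. Once that is in place, the incremental bound is a one-line Chernoff optimization: the only subtle point is justifying that the unrestricted Young-Fenchel supremum agrees with the supremum over $v>0$, which follows cleanly from the evenness and convexity of $\varphi$.
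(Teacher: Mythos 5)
The paper does not prove this lemma; it is quoted verbatim as ``(\cite{BK} Lemma 4.3)'' from Buldygin and Kozachenko's book and invoked as a black box, so there is no in-paper argument to compare you against. Your sketch itself is sound, and the Chernoff derivation of the incremental inequality is exactly the standard route: Markov plus (\ref{2p}) gives $\textsf{P}(\xi\geq u\tau)\leq\exp\{\varphi(\lambda\tau)-\lambda u\tau\}$, optimizing $v=\lambda\tau>0$ recovers $\exp\{-\varphi^*(u)\}$ because $\varphi$ is even and vanishes at $0$, and the two-sided bound with the factor $2$ follows by running the same estimate for $-\xi$.

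Two small corrections on the Banach-space part. First, in the definiteness step the constant should go the other way: the $Q$-condition gives a \emph{lower} bound $\varphi(a\lambda)\geq(c-\varepsilon)(a\lambda)^2$ for small $\lambda$, and comparing Taylor expansions then yields $\textsf{E}X^2/2\leq\liminf_{\lambda\to0}\varphi(a\lambda)/\lambda^2=a^2c$, i.e.\ $\textsf{E}X^2\leq 2ca^2$, not $c^{-1}a^2$; the conclusion ($a\to 0$ forces $X=0$ a.s.) is of course unaffected. In fact one can dispense with the $Q$-condition here entirely: since $\varphi(a\lambda)\to 0$ as $a\to 0$ for each fixed $\lambda$, and Jensen with $\textsf{E}X=0$ gives $\textsf{E}\exp\{\lambda X\}\geq 1$, one gets $\textsf{E}\exp\{\lambda X\}=1$ for all $\lambda$, hence $X=0$ a.s. Second, with your specific choice of H\"older exponents $p=(a+b)/a$, $q=(a+b)/b$ one has $pa=qb=a+b$, so $\frac{1}{p}\varphi(\lambda pa)+\frac{1}{q}\varphi(\lambda qb)=\varphi(\lambda(a+b))$ is an identity and no appeal to Lemma \ref{book} is needed; the convexity estimate you mention would only come into play if you allowed general conjugate exponents. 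The Fatou argument for completeness is correct as stated.
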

	
	\begin{remark}
		Centered Gaussian  process  $(X_{t})_{t\in T}$ is an important example of  $\varphi$-sub-Gaussian process,  where $\varphi(x)=x^2 / 2$ and $\tau_{\varphi}(X_t)=\left(\textsf{E}|X_t|^2\right)^{1 / 2}$.
	\end{remark}
	
	\begin{remark}
		Another example of $\varphi$-sub-Gaussian distribution lies in the Weibull distribution. When the probability density function of $\xi$ is of the form
		$$f(x ; \kappa, q)= \begin{cases}\frac{q}{\kappa}\left(\frac{x}{\kappa}\right)^{q-1} e^{-(x / \kappa)^q}, & x \geq 0 \\ 0, & x<0\end{cases},$$
where $q>0$ is the shape parameter and $\kappa>1$ is the scale parameter of the distribution, $\xi$ obeys  $\varphi$-sub-Gaussian distribution.  
   \end{remark}

\section{Talagrand's chaining functionals }\label{secT}

Talagrand developed a comprehensive framework on the generic chaining method to capture the relevant geometric structure and the boundness of stochastic processes. Let us begin with some definitions related to generic chaining.  $T$ is the index set of $\varphi$-sub-Gaussian process $(X_t)_{t\in T}$. $d$ is a metric on $T$, and defined by $d(s,t)=\tau_\varphi(X_t- X_s)$.

\begin{definition}
	An increasing sequence $\left(\mathcal{A}_n\right)_{n \geq 0}$ of partitions of $T$ is called admissible if $ \operatorname{card} (\mathcal{A}_0)=1$ and $ \operatorname{card} (\mathcal{A}_n) \leq 2^{2^n}$ for all $n \geq 1$, $ \operatorname{card}(T)$ denotes the cardinality of a set $T$.
\end{definition}

For $t\in T$, $A_n(t)$ denotes the unique element of $\mathcal{A}_n$ which  contains $t$.  If the stochastic process $(X_t)_{t\in T}$ satisfies
\begin{equation}
	\label{sub-1}
	\mathsf{P}\left(|X_t-X_s|\ge u\right)\le 2\exp\left(-\frac{u^2}{2d^2(s,t)}\right),
\end{equation}
where $u>0$, Talagrand introduced chaining functional, $\gamma_{2}(T, d)$, to replace Dudely's bound as the upper bound of the sub-Gaussian process, where 
\begin{eqnarray}\label{g2}
	\gamma_{2}(T, d)=\inf\sup _{t \in T} \sum_{n\geq 0}^{\infty} 2^{n/2} \Delta \left(A_n(t)\right),
\end{eqnarray}
with the infimum taken over all admissible sequences. Here, as always, $ \Delta \left(A_n(t)\right)$ denotes the diameter of a set $A_n(t)$. Inspired by  Talagrand \cite{t2} and Dirksen \cite{SD}, we introduce the following functionals to capture the boundness of  $\varphi$-sub-Gaussian process. For any $1\leq p<\infty$, $k_p:=\left\lfloor\frac{\log(p)}{\log(2)}\right\rfloor$, where $\left\lfloor\cdot \right\rfloor$ denotes the integer part. 

\begin{definition}
	For an Orlicz $N$-function $\varphi$ satisfying $Q-$condition, distribution-dependent Talagrand-type $\gamma$-functional
	is defined by
	\begin{eqnarray}\label{gg1}
		\gamma_{\varphi,p}(T, d)=\inf\sup _{t \in T} \sum_{n\geq k_p}^{\infty} \varphi^{*(-1)}(2^{n}) \Delta \left(A_n(t)\right),
	\end{eqnarray}	
	where the infimum is taken over all admissible sequences.
\end{definition}

When $\varphi(x)=\frac{x^2}{2}$, we denote $\gamma_{\varphi,p}(T, d)$ by $\gamma_{2,p}(T, d)$. We also introduce a modified version of the functional in the following.

\begin{definition}
	For an Orlicz $N$-function $\varphi$ satisfying $Q-$condition,
	distribution-dependent Talagrand-type modified $\gamma$-functional
	is defined by
	\begin{eqnarray} \label{modg}
		\tilde\gamma_{\varphi,p}(T, d)=\inf_{T_n\subset T, \text{card}(T_n)\le 2^{2^n}}\sup _{t \in T} \sum_{n\geq k_p}^{\infty} \varphi^{*(-1)}(2^{n}) d\left(t, T_n\right).
	\end{eqnarray}
\end{definition}

Similarly, when $\varphi(x)=\frac{x^2}{2}$, we denote $\tilde{\gamma}_{\varphi,p}(T, d)$ by $\tilde{\gamma}_{2,p}(T, d)$.

To discuss the properties of (\ref{gg1}) and (\ref{modg}), we need to employ the entropy number of a subset of $T$.  For $A\subset T$,  the entropy number is defined as 
$$e_n(A)=\inf_{S\subset T, \text{card}(S)\le 2^{2^n}}\sup_{t\in A}d(t,S)=\inf\{\varepsilon:N(A, d, \varepsilon)\le 2^{2^n}\},$$
$N(A, d, \varepsilon)$ is the smallest integer $N$ such that $A$ can be covered by $N$ balls of radius $\varepsilon$.

Dudley's bound plays an essential role in the history of studying the boundness of stochastic processes.  It is natural to consider Dudley's bound in our setting. The following theorem shows that our chaining functional is smaller than Dudley's bound. Before presenting our result, we need a condition for $\varphi$.

\begin{definition}
			For any $b\geq1$, if  there exists some $M_b\in(0,1)$, such that
			\begin{equation}\label{0905}
				\frac{\varphi^{*(-1)}(bx)}{\varphi^{*(-1)}(x)}\geq\frac{1}{1-M_b}
			\end{equation}
			holds for any $x\geq 2$, we say that $\varphi$ satisfies  $\Delta_2-$condition.
\end{definition}
		
		\begin{remark}
			One important example satisfying the $\Delta_2-$condition is
			$$\varphi(x)=c|x|^a,\quad a>1,\, c>0.$$
		\end{remark}

\begin{remark}
The definition of  $\Delta_2-$condition is from (8.63) in Talagrand \cite{t2}. When the functions, $-\log \textsf{P}(\xi_i\ge x)$, are convex, Latała \cite{la} studied the the suprema of canonical processes $\sum_{i \ge 1}t_i\xi_i$ under   $\Delta_2-$condition, where $t_i\in \mathbb {R}$. 	$\Delta_2-$condition is essential for obtaining the majorizing measure theorem of  $\sum_{i \ge 1}t_i\xi_i$.	\end{remark}

\begin{theorem}
	It is assumed that the function $\varphi(\cdot)$ satisfies $\Delta_2-$condition. Then, 
	\begin{equation}
		\tilde{\gamma}_{\varphi,p}(T,d)\leq\int_0^{e_{k_p}(T)}\varphi^{*(-1)}\left(N(T,d,\varepsilon)\right)d\varepsilon\leq\int_0^{\infty}\varphi^{*(-1)}\left(N(T,d,\varepsilon)\right)d\varepsilon.
	\end{equation}
	\end{theorem}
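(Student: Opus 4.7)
The second inequality is immediate: the integrand $\varphi^{*(-1)}(N(T,d,\varepsilon))$ is nonnegative, so extending the upper limit of integration only enlarges the integral. The real content is the first inequality, and my plan has three steps: (i) produce admissible covering nets from the entropy numbers, (ii) convert the resulting discrete sum into an integral by a Fubini-type exchange, and (iii) dominate the piecewise-constant integrand slab-by-slab using the $\Delta_2$-condition.

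For step (i), for each $n\ge k_p$ the definition of $e_n(T)$ lets me pick $T_n\subset T$ with $\operatorname{card}(T_n)\le 2^{2^n}$ and $\sup_{t\in T}d(t,T_n)\le e_n(T)$, up to an arbitrarily small slack that I send to zero at the end. Substituting this family into the definition of $\tilde\gamma_{\varphi,p}$ immediately gives
$$\tilde\gamma_{\varphi,p}(T,d)\le \sum_{n\ge k_p}\varphi^{*(-1)}(2^n)\,e_n(T).$$
For step (ii), writing $e_n(T)=\int_0^\infty \mathbf{1}_{\{\varepsilon<e_n(T)\}}\,d\varepsilon$ and exchanging sum and integral yields
$$\sum_{n\ge k_p}\varphi^{*(-1)}(2^n)\,e_n(T)=\int_0^{e_{k_p}(T)}\Bigg(\sum_{\substack{n\ge k_p\\ e_n(T)>\varepsilon}}\varphi^{*(-1)}(2^n)\Bigg)d\varepsilon.$$
Since $e_n(T)$ is nonincreasing in $n$, the inner sum on the slab $\varepsilon\in(e_{n+1}(T),e_n(T))$ equals $\sum_{m=k_p}^{n}\varphi^{*(-1)}(2^m)$; moreover, by the very definition of $e_n(T)$ we have $N(T,d,\varepsilon)>2^{2^n}$, hence monotonicity of $\varphi^{*(-1)}$ gives $\varphi^{*(-1)}(N(T,d,\varepsilon))\ge \varphi^{*(-1)}(2^{2^n})$.

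The step I expect to be the main obstacle is the slabwise comparison
$$\sum_{m=k_p}^{n}\varphi^{*(-1)}(2^m)\le \varphi^{*(-1)}(2^{2^n}),$$
and this is where the $\Delta_2$-condition (\ref{0905}) is indispensable. Iterating it with $b=2$ produces the geometric estimate $\varphi^{*(-1)}(2^m)\le(1-M_2)^{n-m}\varphi^{*(-1)}(2^n)$ for $m\le n$, so the inner sum is bounded by $\varphi^{*(-1)}(2^n)/M_2$. Iterating $\Delta_2$ a further $2^n-n$ times (available since $2^n-n\ge 1$ for $n\ge 1$) yields $\varphi^{*(-1)}(2^{2^n})\ge(1-M_2)^{-(2^n-n)}\varphi^{*(-1)}(2^n)$, and the doubly-growing factor $(1-M_2)^{-(2^n-n)}$ swamps the $1/M_2$ loss for all but finitely many small $n$; the latter boundary cases can be absorbed by slightly inflating the first net $T_{k_p}$ or checked by hand. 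Assembling the three steps yields the asserted chain of inequalities.
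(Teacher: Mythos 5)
Your plan diverges from the paper at step (iii), and that step contains a genuine gap. The paper does not try to prove a slab-by-slab pointwise bound; it instead uses an Abel (summation-by-parts) rearrangement: it writes the target sum as $\sum_{n\ge k_p}\varphi^{*(-1)}(2^n)\bigl(e_n(T)-e_{n+1}(T)\bigr)$, bounds this above by the Riemann integral of $\varphi^{*(-1)}$ of the covering number over $[e_{n+1}(T),e_n(T)]$ (picking up a factor $\log 2$ from Lemma~\ref{inv}), and bounds it below by $M_2\sum_{n\ge k_p}\varphi^{*(-1)}(2^n)e_n(T)$ via the $\Delta_2$-condition. The net result is a comparison with a multiplicative constant $\frac{1}{M_2\log 2}$, not a pointwise domination of integrands. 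Your proposed slabwise inequality $\sum_{m=k_p}^{n}\varphi^{*(-1)}(2^m)\le\varphi^{*(-1)}(2^{2^n})$ is in fact \emph{false} in general: for $\varphi(x)=x^2/2$ (so $\varphi^{*(-1)}(y)=\sqrt{2y}$), $p=1$, $n=1$ one gets $\varphi^{*(-1)}(1)+\varphi^{*(-1)}(2)=\sqrt{2}+2\approx 3.41$, while $\varphi^{*(-1)}(2^{2^1})=\varphi^{*(-1)}(4)=\sqrt{8}\approx 2.83$. The ``swamping'' from iterating $\Delta_2$ only kicks in for $n$ large, and the proposed remedy of ``inflating the first net $T_{k_p}$'' cannot help, because the failing comparison is between two fixed numbers determined by $\varphi$ and $n$ alone, independent of the net. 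A correct pointwise approach would have to absorb a multiplicative constant into the statement, which the inequality as written does not allow.

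Two further cautions. First, the $\Delta_2$-condition as defined in the paper only applies for $x\ge 2$, so you cannot iterate it starting at $x=2^0=1$; your iteration chain at $n=0$ is therefore unavailable (although that particular slab happens to work by monotonicity alone). Second, be aware that the theorem's displayed statement reads $\varphi^{*(-1)}(N(T,d,\varepsilon))$ but the paper's own proof manipulates $\varphi^{*(-1)}(\log N(T,d,\varepsilon))$ throughout; your argument targets the literal $N$-version, and the paper's Abel-summation argument in fact only yields the $\log N$-version up to the constant $C_2=\frac{1}{M_2\log 2}>1$ (which the paper silently drops in its last display). If you want a watertight proof, follow the Abel-summation route and state the conclusion with $\lesssim$.
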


\begin{proof}
	It can be observed from the definition of $e_n(T)$ that $N(T,d,\varepsilon)\geq1+2^{2^n}$ as $\varepsilon\leq e_n(T)$. Hence, 
	\begin{equation*}
		\varphi^{*(-1)}(1+2^{2^n})\left(e_n(T)-e_{n+1}(T)\right)\leq\int_{e_n(T)}^{e_{n+1}(T)}	\varphi^{*(-1)}(\log N(T,d,\varepsilon))d\varepsilon.
	\end{equation*}
	Since $\log(1+2^{2^n})\geq2^n\log2$, summation over $n\geq k_p=\lfloor\frac{\log p}{\log2}\rfloor$ gives 
	\begin{equation}\label{dud01}
		\log2\sum_{n\geq k_p}\varphi^{*(-1)}(2^n)(e_n(T)-e_{n+1}(T))\leq\int_0^{e_{k_p}(T)}\varphi^{*(-1)}\left(\log N(T,d,\varepsilon)\right)d\varepsilon.
	\end{equation}
	For some constant $C_1\in(0,1)$, we have,
	\begin{align}\label{dud02}
		\sum_{n\geq k_p}\varphi^{*(-1)}(2^n)(e_n(T)-e_{n+1}(T))=&\sum_{n\geq k_p}\varphi^{*(-1)}(2^n)e_n(T)-\sum_{n\geq k_p+1}\varphi^{*(-1)}(2^{n-1})e_{n}(T\nonumber)\\
		&\geq\sum_{n\geq k_p}\varphi^{*(-1)}(2^n)\left(1-\frac{\varphi^{*(-1)}(2^{n-1})}{\varphi^{*(-1)}(2^{n})}\right)e_n(T)\\
		&\geq M_2\sum_{n\geq k_p}\varphi^{*(-1)}(2^n)e_n(T).\nonumber
	\end{align}
	Thus, a combination of (\ref{dud01}), (\ref{dud02}) and the definition of $\gamma_{\varphi,p}$ yields
	\begin{align}
		\gamma_{\varphi,p}(T,d)\leq\sum_{n\geq k_p}\varphi^{*(-1)}(2^n)e_n(T)&\leq C_2\int_0^{e_{k_p}(T)}\varphi^{*(-1)}\left(\log N(T,d,\varepsilon)\right)d\varepsilon\nonumber\\
		&\leq\int_0^{\infty}\varphi^{*(-1)}\left(\log N(T,d,\varepsilon)\right)d\varepsilon.
	\end{align}
	This completes the proof.
\end{proof}

The growth condition of Talagrand's chaining functional and partitioning schemes is the backbone of  Talagrand's formulation. Now, we discuss the growth condition and partition schemes in our setting.   We first present some definitions, which are from Talagrand \cite{t2}.

\begin{definition}
	A map $F$ is a functional on  subsets of  $T$ if 
	\begin{itemize}
		\item $F(H)\geq0$ for each subset $H$ of $T$;
		\item $F(H')\geq F(H)$ for $H\subset H'\subset T$.
	\end{itemize}
\end{definition}

\begin{definition}
	For a metric space $T$ endowed with distance $d$, given $a>0$ and an integer $r \geq 8$, we say that subsets $H_1, \ldots, H_m$ of $T$ are $(a, r)$-separated if
	$$
	H_{\ell} \subset B\left(t_{\ell}, 2 a / r\right) \quad\text{for all}\quad\ell \leq m,
	$$
	with $ B\left(t_{\ell}, 2 a / r\right)$ denoting a ball centered at $t_{\ell}$ with radius $2a/r$, where the points $t_1, t_2, \ldots, t_m$ in $T$ satisfy
	$$
	a \leq d\left(t_{\ell}, t_{\ell^{\prime}}\right) \leq 2 a r \quad \text{for all}\quad\ell, \ell^{\prime} \leq m, \ell \neq \ell^{\prime} .
	$$
\end{definition}

\begin{definition}
	We say that the functional $F$ satisfies the generalized growth condition with parameters $r \geq 8$ and $c^*>0$ if for any integer $n \geq 1+\lfloor\log_2p\rfloor$ and any $a>0$ the following holds true, where $m=2^{2^n}$ : For each collection of subsets $H_1, \ldots, H_m$ of $T$ that are $(a, r)$-separated, we have
	$$
	F\left(\bigcup_{\ell \leq m} H_{\ell}\right) \geq c^* a \varphi^{*(-1)}\left(2^{n}\right)+\min _{\ell \leq  m} F\left(H_{\ell}\right).
	$$
\end{definition}

\begin{proposition}\label{growth}
	Assume that $r\geq16$,  the functional $\gamma_{\varphi,p}(T, d)$ satisfies the generalized version of the growth condition with parameters $r$ and $c^*=1/8$. Under the same assumption, the functional $\tilde\gamma_{\varphi,p}(T, d)$ also satisfies the generalized version of the growth condition with parameters $r$ and $c^*=1/8$.
\end{proposition}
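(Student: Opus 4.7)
The plan is to fix an $\varepsilon$-optimal admissible sequence $(\mathcal{A}_k)_{k\ge 0}$ of $H := \bigcup_{\ell \le m} H_\ell$ (with $m = 2^{2^n}$) and to show directly that
$$\sup_{t \in H} \sum_{k \ge k_p} \varphi^{*(-1)}(2^k)\Delta(A_k(t)) \;\ge\; \tfrac{a}{8}\varphi^{*(-1)}(2^n) + \min_{\ell \le m} \gamma_{\varphi,p}(H_\ell),$$
after which the growth condition follows by sending $\varepsilon\to 0$. Since $r\ge 16$ forces $d(t_\ell,t_{\ell'})\ge a>4a/r$, the balls $B(t_\ell,2a/r)$, and hence the $H_\ell$, are pairwise disjoint. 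Call $\ell$ \emph{isolated at level $n-1$} if some $A\in \mathcal{A}_{n-1}$ meets $H_\ell$ but no $H_{\ell'}$ with $\ell'\ne \ell$. By disjointness of the $H_\ell$'s, distinct isolated indices witness distinct partition elements, so the number of isolated $\ell$ is at most $|\mathcal{A}_{n-1}|\le 2^{2^{n-1}}<m$; I pick any non-isolated $\ell^*$.

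With this choice, every $A_{n-1}(t)$ for $t\in H_{\ell^*}$ also meets some $H_{\ell'}$ with $\ell'\ne \ell^*$, so $\Delta(A_{n-1}(t))\ge a-4a/r\ge 3a/4$ \emph{uniformly} on $H_{\ell^*}$. I would then transfer the ``wasted'' $(n{-}1)$-st level to an admissible sequence on $H_{\ell^*}$ via a one-level shift: set $\mathcal{B}_k^{\ell^*} := \mathcal{A}_k|_{H_{\ell^*}}$ for $k\ne n-1$ and $\mathcal{B}_{n-1}^{\ell^*} := \mathcal{A}_{n-2}|_{H_{\ell^*}}$ (or $\{H_{\ell^*}\}$ when $n=1$). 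This sequence is nested, $|\mathcal{B}_{n-1}^{\ell^*}|\le 2^{2^{n-2}}\le 2^{2^{n-1}}$, and $\Delta(B_{n-1}^{\ell^*}(t))\le \operatorname{diam}(H_{\ell^*})\le 4a/r\le a/4$. Consequently,
$$\gamma_{\varphi,p}(H_{\ell^*}) \;\le\; \sup_{t\in H_{\ell^*}}\sum_{\substack{k\ge k_p\\ k\ne n-1}}\varphi^{*(-1)}(2^k)\Delta(A_k(t)) + \tfrac{a}{4}\varphi^{*(-1)}(2^{n-1}).$$

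To finish, the uniform bound $\Delta(A_{n-1}(t))\ge 3a/4$ on $H_{\ell^*}$ lets me pull the level-$(n{-}1)$ term out of the supremum:
\begin{align*}
\sup_{t\in H}\sum_{k\ge k_p}\varphi^{*(-1)}(2^k)\Delta(A_k(t))
&\ge \tfrac{3a}{4}\varphi^{*(-1)}(2^{n-1}) + \sup_{t\in H_{\ell^*}}\sum_{\substack{k\ge k_p\\ k\ne n-1}}\varphi^{*(-1)}(2^k)\Delta(A_k(t)) \\
&\ge \tfrac{a}{2}\varphi^{*(-1)}(2^{n-1}) + \gamma_{\varphi,p}(H_{\ell^*}).
\end{align*}
Lemma~\ref{inv} applied to $\varphi^{*}$ yields $\varphi^{*(-1)}(2^n)\le 2\varphi^{*(-1)}(2^{n-1})$, so the right-hand side dominates $\tfrac{a}{4}\varphi^{*(-1)}(2^n)+\min_\ell \gamma_{\varphi,p}(H_\ell)\ge \tfrac{a}{8}\varphi^{*(-1)}(2^n)+\min_\ell\gamma_{\varphi,p}(H_\ell)$, as required.

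The argument for $\tilde\gamma_{\varphi,p}$ is parallel. Since $|T_{n-1}|\le 2^{2^{n-1}}<m$ and the balls $B(t_\ell,a/2)$ are disjoint (because $d(t_\ell,t_{\ell'})\ge a$), some $\ell^*$ has $T_{n-1}\cap B(t_{\ell^*},a/2)=\emptyset$, so $d(t,T_{n-1})\ge a/2-2a/r\ge 3a/8$ for every $t\in H_{\ell^*}$. Replacing $T_{n-1}$ in the admissible family on $H_{\ell^*}$ by any single point of $H_{\ell^*}$ (whose contribution is at most $\operatorname{diam}(H_{\ell^*})\le a/4$) and keeping the remaining $T_k$'s inside $H_{\ell^*}$ plays exactly the role of the one-level shift above, and the same arithmetic produces $c^*=1/8$. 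The main obstacle is calibrating the shifted admissible sequence so that the level-$(n{-}1)$ contribution stays below $a/4$, which is exactly where the constraint $r\ge 16$ is used, while losing only half of the $(3a/4)\varphi^{*(-1)}(2^{n-1})$ bonus; that $3/4-1/4=1/2$ arithmetic combined with Lemma~\ref{inv} is precisely what delivers the constant $1/8$.
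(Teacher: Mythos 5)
Your argument for $\gamma_{\varphi,p}$ is correct and is essentially the paper's proof: the same pigeonhole count against $\operatorname{card}\mathcal{A}_{n-1}\le 2^{2^{n-1}}<2^{2^n}$ produces a bad index $\ell^*$, the same separation geometry forces $\Delta(A_{n-1}(t))\gtrsim a$ uniformly on $H_{\ell^*}$, and restricting the partitions to $H_{\ell^*}$ recovers $\min_{\ell}\gamma_{\varphi,p}(H_\ell,d)$. The only difference is the bookkeeping at level $n-1$: the paper keeps $\mathcal{A}_{n-1}\cap H_{\ell_0}$ and works with the nonnegative differences $\Delta(A_k(t))-\Delta(A_k(t)\cap H_{\ell_0})$, whereas you coarsen level $n-1$ to $\mathcal{A}_{n-2}|_{H_{\ell^*}}$ and subtract its $\le a/4$ cost; both routes extract at least $\tfrac14 a\,\varphi^{*(-1)}(2^{n-1})\ge\tfrac18 a\,\varphi^{*(-1)}(2^n)$ via Lemma \ref{inv}. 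One quantitative caveat on the $\tilde\gamma_{\varphi,p}$ half (which the paper also only asserts is ``similar''): there the level-$(n-1)$ gain is only $d(t,T_{n-1})\ge a/2-2a/r$ rather than $3a/4$, so your net bonus is $(a/2-2a/r-4a/r)\varphi^{*(-1)}(2^{n-1})\ge \tfrac{a}{8}\varphi^{*(-1)}(2^{n-1})\ge\tfrac{a}{16}\varphi^{*(-1)}(2^n)$ when $r\ge16$, i.e.\ $c^*=1/16$ rather than the stated $1/8$; this is harmless for Theorem \ref{th 2.9.1}, which only needs some fixed $c^*>0$.
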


 
 The proofs of Proposition \ref{growth} and Proposition \ref{growthh} are similar. We only give a detailed proof of Proposition \ref{growth} for simplicity.
 
\begin{proof}
	Consider points $\left(t_{\ell}\right)_{\ell \leq m}$ of $T$ with $d\left(t_{\ell}, t_{\ell^{\prime}}\right) \geq a$ if $\ell \neq \ell^{\prime}$. Consider sets $H_{\ell} \subset B\left(t_{\ell}, a / 8\right)$ and the set $H=\bigcup_{\ell \leq m} H_{\ell}$. For an arbitrary admissible sequence of partitions $\left(\mathcal{A}_n\right)$ of $H$, consider the set
	$$
	I_n=\left\{\ell \leq m, \text{there exists}\,\, A \in \mathcal{A}_{n-1} \,\,\text{satisfying}\,\, A \subset H_{\ell}\right\}.
	$$
	
	An injection $\mathcal{M}: I_n \to \mathcal{A}_{n-1}$ is defined by, for $\ell \in I_n$,
	$$\mathcal{M}(\ell)=A$$
	for an arbitrary $A \in \mathcal{A}_{n-1}$ with $A \subset H_{\ell}$. As a consequence, $ \operatorname{card} I_n \leq \operatorname{card} \mathcal{A}_{n-1} \leq 2^{2^{n-1}}<m=2^{2^n}$. 
	
	Hence, there exists $\ell_0 \notin I_n$, and then $A_{n-1}(t) \not \subset H_{\ell_0}$. So that since $A_{n-1}(t) \subset H$, the set $A_{n-1}(t)$ must intersect a set $H_{\ell} \neq H_{\ell_0}$. Consequently, $A_{n-1}(t)$ intersects the ball $B\left(t_{\ell}, a / 8\right)$. Since $t \in H_{\ell_0}$, we have $d\left(t, B\left(t_{\ell}, a / 8\right)\right) \geq a-2a/r-a/8\geq a / 2$. Since $t \in A_{n-1}(t)$, this implies that $\Delta\left(A_{n-1}(t)\right) \geq a / 2$. 
	
	Since $\Delta\left(A_{n-1}(t) \cap H_{\ell_0}\right) \leq \Delta\left(H_{\ell_0}\right) \leq a / 4$, we have proven that for $t \in H_{\ell_0}$ and any integer $n\geq1$, 
	\begin{equation}\label{equaa1}
		\Delta\left(A_{n-1}(t)\right) \geq \Delta\left(A_{n-1}(t) \cap H_{\ell_0}\right)+\frac{1}{4} a.
	\end{equation}
	
	Recall that for $p\geq1$, $ \lfloor\frac{\log p}{\log2}\rfloor$ is denoted by $k_p$. Now, since for each $k \geq 0$ we have $\Delta\left(A_k(t)\right) \geq \Delta\left(A_k(t) \cap H_{\ell_0}\right)$, we have
	$$
	\begin{aligned}
		&\sum_{k \geq k_p} \varphi^{*(-1)}\left(2^{k }\right)\left(\Delta\left(A_k(t)\right) -\Delta\left(A_k(t) \cap H_{\ell_0}\right)\right)\\
		& \geq \varphi^{*(-1)}\left(2^{n+k_p-1}\right)\left(\Delta\left(A_{n+k_p-1}(t)\right)-\Delta\left(A_{n+k_p-1}(t) \cap H_{\ell_0}\right)\right) \\
		& \geq \frac{1}{4} a \varphi^{*(-1)}\left(2^{n-1}\right),
	\end{aligned}
	$$
	where we have used (\ref{equaa1}) in the last inequality, and, consequently,
	\begin{equation}\label{equaa2}
		\sum_{k\geq k_p} \varphi^{*(-1)}\left(2^{k}\right) \Delta\left(A_k(t)\right) \geq \frac{1}{4} a \varphi^{*(-1)}\left(2^{n-1}\right)+\sum_{k\geq k_p} \varphi^{*(-1)}\left(2^{k }\right) \Delta\left(A_k(t) \cap H_{\ell_0}\right).
	\end{equation}
	
	Next, consider the admissible sequence $\left(\mathcal{A}_n^{\prime}\right)$ of $H_{\ell_0}$ given by $\mathcal{A}_n^{\prime}=\{A \cap$ $\left.H_{\ell_0} ; A \in \mathcal{A}_n\right\}$. We have, by definition,
	$$
	\sup _{t \in H_{\ell_0}} \sum_{k \geq k_p} \varphi^{*(-1)}\left(2^{k}\right) \Delta\left(A_k(t) \cap H_{\ell_0}\right) \geq \gamma_{\varphi,p}\left(H_{\ell_0}, d\right) .
	$$
	
	Hence, taking the supremum over $t$ in $H_{\ell_0}$ in (\ref{equaa2}), we get
	\begin{align*}
		\sup _{t \in H_{\ell_0}} \sum_{k \geq k_p} \varphi^{*(-1)}\left(2^{k }\right) \Delta\left(A_k(t)\right) &\geq \frac{1}{4} a  \varphi^{*(-1)}\left(2^{n-1}\right)+\gamma_{\varphi,p}\left(H_{\ell_0}, d\right)\\
		&\geq \frac{1}{8} a  \varphi^{*(-1)}\left(2^{n }\right)+\min _{\ell \leq m} \gamma_{\varphi,p}\left(H_{\ell}, d\right).
	\end{align*}
	Since the admissible sequence $\left(\mathcal{A}_n\right)$ is arbitrary, we have proven that
	\begin{equation}
		\gamma_{\varphi,p}(H, d) \geq \frac{1}{8} a \varphi^{*(-1)}\left(2^{n}\right)+\min _{\ell \leq m} \gamma_{\varphi,p}\left(H_{\ell}, d\right),
	\end{equation}
	which completes the proof.
\end{proof}

The following result is on the partitioning schemes. It is essential for obtaining the lower bound or other important results in generic chaining.  The  equivalence of $\gamma_{\varphi,p}(T, d)$ and $\tilde\gamma_{\varphi,p}(T, d)$ can be also derived by this result. See Theorem \ref{eq1} for the proof of equivalence.

\begin{theorem}\label{th 2.9.1}
	Assume a functional $F$ exists, which satisfies the generalized growth condition on $T$ for the parameters $r$ and $c^*$. Assume that the function $\varphi(\cdot)$ satisfies $\Delta_2-$condition, 
	then we have 
	\begin{equation}\label{gc}
		\gamma_{\varphi}\left(T , d\right) \leqslant \frac{L r}{c^*} F(T)+ L r \Delta(T).
	\end{equation}
\end{theorem}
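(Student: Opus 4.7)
The plan is to adapt Talagrand's partitioning scheme to build an admissible sequence $(\mathcal{A}_n)_{n\geq 0}$ of $T$ that witnesses
$$\sum_{n\geq k_p}\varphi^{*(-1)}(2^n)\,\Delta(A_n(t))\ \leq\ \frac{Lr}{c^*}F(T)+Lr\,\Delta(T),\qquad t\in T,$$
which by (\ref{gg1}) is the desired bound on $\gamma_{\varphi,p}(T,d)$. The $\Delta_2$-condition will enter only to absorb constant factors between $\varphi^{*(-1)}(2^n)$ and $\varphi^{*(-1)}(2^{n+1})$, and to handle the finite block of indices $n\leq k_p$ by a single geometric sum.

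First I would set $\mathcal{A}_n=\{T\}$ for $n\leq k_p$; the $\Delta_2$-condition (\ref{0905}) gives $\sum_{n\leq k_p}\varphi^{*(-1)}(2^n)\leq L\varphi^{*(-1)}(2^{k_p})$, so these coarse levels contribute only $Lr\,\Delta(T)$. For $n\geq k_p$, I would partition each $A\in\mathcal{A}_n$ into at most $2^{2^n}$ pieces (so that $\operatorname{card}(\mathcal{A}_{n+1})\leq 2^{2^{n+1}}$) as follows. For a candidate scale $a=\Delta(A)\,r^{-k}$, pick a maximal $a$-separated set $\{t_\ell\}_{\ell\leq m}\subset A$; by maximality the balls $B(t_\ell,a)$ cover $A$ and the associated Voronoi cells $H_\ell\subset A$ (those points closer to $t_\ell$ than to any other $t_{\ell'}$) form a partition of $A$ with $H_\ell\subset B(t_\ell,a)$. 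Increase $k$ geometrically until either $m\leq 2^{2^n}$ (acceptable, and we place the $H_\ell$'s into $\mathcal{A}_{n+1}$) or $m>2^{2^n}$; in the latter case any $2^{2^n}+1$ of the $t_\ell$ constitute an $(a,r)$-separated family, and the growth condition yields
$$F(A)\ \geq\ c^*\,a\,\varphi^{*(-1)}(2^n)+\min_{\ell}F(H_\ell).$$

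Chained along the descending sequence $A_{k_p}(t)\supset A_{k_p+1}(t)\supset\cdots$, acceptable levels satisfy $\Delta(A_{n+1}(t))\leq 2a_n$ with $a_n$ the scale chosen at level $n$, while forced levels give $\varphi^{*(-1)}(2^n)a_n\leq(1/c^*)\bigl(F(A_n(t))-F(A_{n+1}(t))\bigr)$. Telescoping over $n\geq k_p$ and using $F\geq 0$ produces the bound $LrF(T)/c^*$; combined with the $Lr\,\Delta(T)$ term from the coarse levels this yields the claim. The main obstacle is the inductive bookkeeping in the partitioning step: one must orchestrate the geometric choice of scales $\Delta(A)r^{-k}$ so that (i) every descending chain eventually has small enough diameter, (ii) the growth condition can be invoked whenever the greedy packing produces too many centers, and (iii) the diameter and $F$-decrement contributions match level by level. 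The hypothesis $r\geq 8$ built into the growth condition is precisely what makes these three constraints compatible, and the $\Delta_2$-condition supplies the constant factor $L$ that absorbs the off-by-one shift in $\varphi^{*(-1)}$ arising from the iterated scale reduction.
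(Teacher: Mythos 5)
Your high-level strategy matches the paper's: recursively partition, invoke the growth condition whenever a greedy packing produces too many centers, and telescope $F$-decrements. But two key steps have genuine gaps.

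First, the Voronoi cells $H_\ell$ of a maximal $a$-separated set $\{t_\ell\}$ are \emph{not} $(a,r)$-separated, so the growth condition cannot be invoked on them as you describe. Maximality only gives $H_\ell \subset B(t_\ell, a)$, whereas the definition requires $H_\ell \subset B(t_\ell, 2a/r)$, a ball at least four times smaller once $r \geq 8$. The paper's Lemma~\ref{l2.9.4} avoids this entirely: it defines the ``bad'' set of points $t \in B$ for which $F(B \cap B(t,2r^{-j-2}))$ stays large, and shows by contradiction --- applying the growth condition to the family $H_k := B \cap B(t_k, 2r^{-j-2})$ built around $r^{-j-1}$-separated centers in the bad set, a family that genuinely is $(r^{-j-1},r)$-separated --- that the bad set can be covered by fewer than $2^{2^n}$ balls of radius $r^{-j-1}$; the single leftover piece, the complement of the bad set in $B$, is the one carrying the decrement. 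Your stopping rule ``increase $k$ until $m \leq 2^{2^n}$ or $m > 2^{2^n}$'' is also not well posed: shrinking $a$ only increases the packing number, and you never say which partition to output when the count explodes.

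Second, and more seriously, the telescoping does not close as sketched. The growth condition's conclusion controls $F$ of a ball of radius $2r^{-j-2}$ around $t$, whereas on a ``forced'' level $A_{n+1}(t)$ still has diameter of order $r^{-j}$, so the inequality $F(A_n(t)) - F(A_{n+1}(t)) \geq c^*\varphi^{*(-1)}(2^n)a_n$ simply does not follow from the construction. One must skip ahead to a descendant of $A_n(t)$ whose diameter has dropped by the extra factor $r^{-2}$ needed to fit inside the decrement ball, and simultaneously control the intermediate levels. The paper does this with Lemma~\ref{l2.9.5}: setting $a(n) = \varphi^{*(-1)}(2^n)r^{-j(n)}$, it extracts a dominant index set $I$ with $\sum_n a(n) \lesssim \sum_{n\in I}a(n)$, uses the $\Delta_2$-condition to derive the structural relations $j(n+1)=j(n)+1$ and $j(n-1)=j(n)$ for $n \in I$, and hence $j(n_{k+2}) \geq j(n_k)+2$, which places $A_{n_{k+2}}(t)$ inside the decrement ball so that the sum telescopes along the enumeration $n_1<n_2<\cdots$ of $I$ with bounded overlap. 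This dominant-index machinery is indispensable, and your remark that one must ``orchestrate the geometric choice of scales'' points at the difficulty without supplying the argument.
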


Some lemmas are needed for the proof of Theorem \ref{th 2.9.1}. The following two elementary lemmas are from Talagrand \cite{t2}.

\begin{lemma}\label{l2.9.3}
	For a given metric space $(T,d)$, assume that any sequence $\left(t_{\ell}\right)_{\ell \leq m}$ with $d\left(t_{\ell}, t_{\ell^{\prime}}\right) \geq$ a for $\ell \neq \ell^{\prime}$ satisfies $m \leq$ $N$. Then, $T$ can be covered by $N$ balls of radius $a$.	
\end{lemma}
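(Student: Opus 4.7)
The plan is to prove this via the classical packing-to-covering duality: construct a maximal $a$-separated set in $T$, invoke the hypothesis to bound its cardinality by $N$, and then use maximality to show that the balls of radius $a$ centered at these points cover $T$.

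Concretely, I would first consider the collection $\mathcal{S}$ of all finite subsets $\{t_1,\dots,t_m\}\subset T$ such that $d(t_\ell,t_{\ell'})\ge a$ whenever $\ell\neq\ell'$. By the assumption, every member of $\mathcal{S}$ has cardinality at most $N$, so we can pick $\{t_1,\dots,t_m\}\in\mathcal{S}$ of maximum cardinality $m\le N$ (no Zorn's lemma needed, since the sizes are bounded by $N$). In the degenerate case $T=\emptyset$ the conclusion is vacuous, so assume $T$ is nonempty, which guarantees $\mathcal{S}$ is nonempty as well.

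Next I would invoke maximality to produce the covering. For an arbitrary $t\in T$, the enlarged set $\{t_1,\dots,t_m,t\}$ cannot belong to $\mathcal{S}$ (otherwise it would contradict maximality, assuming $t\notin\{t_1,\dots,t_m\}$; if $t$ is already one of the $t_\ell$ there is nothing to show). Hence there exists some $\ell\le m$ with $d(t,t_\ell)<a$, so $t\in B(t_\ell,a)$. Letting $t$ range over $T$ yields
\begin{equation*}
T\subset\bigcup_{\ell\le m}B(t_\ell,a),
\end{equation*}
which is a covering of $T$ by $m\le N$ balls of radius $a$, as required.

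I do not anticipate a substantial obstacle: the only subtle point is the distinction between strict and non-strict inequality in the separation condition, which is handled by noting that appending a new point $t$ with $d(t,t_\ell)\ge a$ for all $\ell$ would strictly enlarge a maximum-cardinality separated set. If one wishes to avoid assuming that a maximum exists in infinite $T$, a short Zorn's lemma argument on the poset of $a$-separated subsets (ordered by inclusion) produces a maximal—not maximum—$a$-separated set, which by the hypothesis is still finite of size at most $N$, and the same argument concludes.
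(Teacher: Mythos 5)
Your argument is correct and is exactly the standard packing--covering duality proof that the paper relies on (it cites this lemma from Talagrand without reproducing the proof): take a maximal $a$-separated subset, bound its size by $N$ via the hypothesis, and use maximality to conclude that the balls of radius $a$ around its points cover $T$. Your handling of the boundary cases (empty $T$, strict versus non-strict separation, and existence of a maximal separated set without Zorn since cardinalities are bounded by $N$) is careful and adds nothing beyond the intended argument.
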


\begin{lemma}\label{l2.9.5}
	Consider numbers $\left(a_n\right)_{n \geq 0}, a_n>0$, and assume $\sup _n a_n<\infty$. Consider $\alpha>1$, and define
	$$
	I=\left\{n \geq 0 ; \text{for all}\,\, k \geq 0, k \neq n, a_k<a_n \alpha^{|n-k|}\right\} .
	$$
	Then $I \neq \emptyset$, and we have
	$$
	\sum_{k \geq 0} a_k \leq \frac{2 \alpha}{\alpha-1} \sum_{n \in I} a_n.
	$$
\end{lemma}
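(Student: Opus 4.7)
The plan is to first establish a stronger \emph{domination} property from which both claims of the lemma follow: for every $k\geq 0$ there exists some $n\in I$ such that $a_k\leq a_n\alpha^{-|n-k|}$. This at once yields $I\neq\emptyset$ (applying it to any fixed $k_0$ produces an element of $I$) and provides the pointwise estimate needed to bound the total sum.

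To prove the domination property I would argue by iteration. Fix $k_0\geq 0$. If $k_0\in I$ we may take $n=k_0$. Otherwise the failure of the defining condition at $k_0$ supplies an index $k_1\neq k_0$ with $a_{k_1}\geq a_{k_0}\alpha^{|k_0-k_1|}$, and in particular $a_{k_1}\geq \alpha\, a_{k_0}$ since $|k_0-k_1|\geq 1$. Continuing recursively, as long as $k_i\notin I$ we produce $k_{i+1}\neq k_i$ with $a_{k_{i+1}}\geq a_{k_i}\alpha^{|k_i-k_{i+1}|}$. By induction together with the triangle inequality applied to the exponents,
\begin{equation*}
a_{k_i}\geq a_{k_0}\prod_{j=1}^{i}\alpha^{|k_{j-1}-k_j|}\geq a_{k_0}\alpha^{|k_0-k_i|},\qquad\text{and}\qquad a_{k_i}\geq a_{k_0}\alpha^{i}.
\end{equation*}
The geometric growth $a_{k_i}\geq a_{k_0}\alpha^{i}$ contradicts $\sup_m a_m<\infty$ if the recursion never reaches $I$; hence some $k_i$ must lie in $I$, and taking $n=k_i$ gives $a_{k_0}\leq a_n\alpha^{-|n-k_0|}$.

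With the domination property in hand, for each $k\geq 0$ one has
\begin{equation*}
a_k\leq\max_{n\in I}a_n\alpha^{-|n-k|}\leq\sum_{n\in I}a_n\alpha^{-|n-k|}.
\end{equation*}
Summing over $k$ and swapping the order of summation yields
\begin{equation*}
\sum_{k\geq 0}a_k\leq\sum_{n\in I}a_n\sum_{k\geq 0}\alpha^{-|n-k|}.
\end{equation*}
Splitting the inner sum into the ranges $k\leq n$ and $k\geq n$, each piece is dominated by the geometric series $\sum_{j\geq 0}\alpha^{-j}=\alpha/(\alpha-1)$, so $\sum_{k\geq 0}\alpha^{-|n-k|}\leq 2\alpha/(\alpha-1)$, which delivers the claimed inequality.

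The main obstacle is engineering the iterative construction so that it simultaneously produces the two exponent estimates above: the triangle-inequality bound on $|k_0-k_i|$ is what the conclusion requires, whereas the cruder bound $a_{k_i}\geq\alpha^{i}a_{k_0}$ is what actually forces termination against the hypothesis $\sup_m a_m<\infty$. Once the iteration is set up correctly, the remaining step is a routine geometric-series computation.
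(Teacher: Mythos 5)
Your proof is correct: the iteration terminates because each step multiplies the value by at least $\alpha>1$, contradicting $\sup_n a_n<\infty$, the triangle inequality on the exponents gives the domination $a_k\leq a_n\alpha^{-|n-k|}$ for some $n\in I$, and the geometric-series summation then yields the stated constant $2\alpha/(\alpha-1)$. The paper itself gives no proof of this lemma (it is quoted from Talagrand's book), and your argument is essentially the standard one found there, so there is nothing further to reconcile.
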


The following lemma provides a fundamental principle for the proof of partition schemes. 

\begin{lemma}\label{l2.9.4}
	Under the conditions of Theorem \ref{th 2.9.1}, consider $B \subset T$ satisfies that $\Delta(B) \leq 2 r^{-j}$  for a certain $j \in \mathbb{Z}$. For any $n \geq 0$, set $m=2^{2^n}$. Then there exists a partition $\left(A_{k}\right)_{k \leq m}$ of $B$ into sets which have either of the following properties:
	\begin{equation}\label{2.9.4.01}
		\Delta\left(A_{k}\right) \leq 2 r^{-j-1},
	\end{equation}
	or else
	\begin{equation}\label{2.9.4.02}
		t \in A_{k} \Rightarrow F\left(B \cap B\left(t, 2 r^{-j-2}\right)\right) \leq F(B)-c^* \varphi^{*(-1)}(2^{n}) r^{-j-1} .
	\end{equation}
\end{lemma}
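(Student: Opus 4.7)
The plan is to follow Talagrand's classical partitioning argument, exploiting the generalized growth condition for $F$ to control the number of points at which the estimate (\ref{2.9.4.02}) can fail. First I would isolate the set of \emph{bad} points,
\[
D = \bigl\{t \in B : F\bigl(B \cap B(t, 2r^{-j-2})\bigr) > F(B) - c^{*}\varphi^{*(-1)}(2^{n})\, r^{-j-1}\bigr\}.
\]
Every point of $B \setminus D$ already satisfies (\ref{2.9.4.02}), so the task reduces to covering $D$ by at most $m-1$ balls of radius $r^{-j-1}$; the final cell of the partition can then absorb the complement.

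Next I would extract a maximal $r^{-j-1}$-separated family $\{t_1,\dots,t_M\} \subset D$ and set $H_\ell := B \cap B(t_\ell, 2r^{-j-2})$. The crucial choice is $a := r^{-j-1}$: it makes $2a/r = 2r^{-j-2}$ match the radius appearing in the definition of $D$, so that $H_\ell \subset B(t_\ell, 2a/r)$, while $\Delta(B) \leq 2r^{-j} = 2ar$ combined with the separation gives $a \leq d(t_\ell,t_{\ell'}) \leq 2ar$. Hence the $H_\ell$ form an $(a,r)$-separated family in the sense of the definition preceding Proposition \ref{growth}.

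Suppose for contradiction that $M \geq m = 2^{2^n}$. Applying the generalized growth condition to the first $m$ sets, together with the defining property of $D$, yields
\[
F(B) \geq F\Bigl(\bigcup_{\ell \leq m} H_\ell\Bigr) \geq c^{*} a\, \varphi^{*(-1)}(2^{n}) + \min_{\ell \leq m} F(H_\ell) > F(B),
\]
since $\bigcup_{\ell \leq m} H_\ell \subset B$ and $F(H_\ell) > F(B) - c^{*}\varphi^{*(-1)}(2^{n}) r^{-j-1} = F(B) - c^{*} a\, \varphi^{*(-1)}(2^{n})$. This contradiction forces $M \leq m-1$. By maximality, every $t \in D$ lies within $r^{-j-1}$ of some $t_\ell$, so $D \subset \bigcup_{\ell \leq M} B(t_\ell, r^{-j-1})$. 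I would then build the partition by setting $A_\ell := \bigl(B \cap B(t_\ell, r^{-j-1})\bigr) \setminus \bigcup_{k<\ell} A_k$ for $\ell \leq M$, each of diameter at most $2r^{-j-1}$ and therefore obeying (\ref{2.9.4.01}), together with $A_{M+1} := B \setminus \bigcup_{\ell \leq M} A_\ell \subset B \setminus D$, every point of which satisfies (\ref{2.9.4.02}); padding with empty sets brings the total number of cells up to $m$.

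The main obstacle is purely bookkeeping: one must choose $a$ so that three a priori unrelated quantities, namely $2r^{-j-2}$ (the radius in $D$), $2a/r$ (the ambient radius in the $(a,r)$-separation definition), and $2ar$ (the diameter cap matching $\Delta(B)$), line up simultaneously, and then verify that the gain $c^{*} a\, \varphi^{*(-1)}(2^{n})$ from the growth condition cancels exactly against the slack $c^{*}\varphi^{*(-1)}(2^{n}) r^{-j-1}$ built into $D$. The hypothesis $r \geq 8$ (in fact only $r \geq 1$ is needed here; the stronger $r \geq 16$ is used in Proposition \ref{growth}) leaves enough room for all the inequalities to close.
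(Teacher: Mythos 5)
Your proof is correct and follows essentially the same route as the paper's: define the set of bad points, use the growth condition with $a = r^{-j-1}$ and $H_\ell = B \cap B(t_\ell, 2r^{-j-2})$ to bound any $r^{-j-1}$-separated subfamily by $m-1$, cover the bad set by $m-1$ balls of radius $r^{-j-1}$, and assemble the partition with one leftover cell on which (\ref{2.9.4.02}) holds. The only cosmetic difference is that the paper invokes Lemma \ref{l2.9.3} to pass from the packing bound to a covering, and intersects its small-diameter cells with the bad set $I$ before disjointifying, whereas you extract a maximal separated family (which gives covering directly) and disjointify inside $B$; both yield a valid partition satisfying the required dichotomy.
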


\begin{proof}
	Consider the set below
	$$
	I=\left\{t \in B ; F\left(B \cap B\left(t, 2 r^{-j-2}\right)\right)>F(B)-c^* \varphi^{*(-1)}(2^{n}) r^{-j-1}\right\} .
	$$
	For points $\left(t_{k}\right)_{k \leq m^{\prime}}$ in $I$ such that $d\left(t_{k}, t_{k^{\prime}}\right) \geq r^{-j-1}$ when $k \neq k^{\prime}$, we have $m^{\prime}<m$. For otherwise, by the generalized growth condition, for $a=r^{-j-1}$ and for the sets $H_{k}:=$ $B \cap B\left(t_{k}, 2 r^{-j-2}\right)$, 
	$$
	F(B) \geq F\left(\bigcup_{k \leq m} H_{k}\right) \geq c^* \varphi^{*(-1)}(2^{n}) r^{-j-1}+\min _{k \leq m} F\left(H_{k}\right)>F(B) .
	$$
	The contradiction means $m^{\prime}<m$.  Consequently, then by the Lemma \ref{l2.9.3} with $N=m-1$, we may cover $I$ by $m^{\prime}<m$ balls $\left(B_{k}\right)_{k \leq m^{\prime}}$ of radius $\leq r^{-j-1}$. Finally, we set $A_{k}=I \cap\left(B_{k} \backslash \cup_{k^{\prime}<k} B_{k^{\prime}}\right)$ for $k \leq m^{\prime}, A_{k}=\emptyset$ for $m^{\prime}<k<m$ and $A_m=B \backslash I$, and the proof of the lemma is complete.
\end{proof}

\begin{proof}[Proof of Theorem \ref{th 2.9.1}]

	To drive (\ref{gc}), we construct an admissible sequence of partitions $\mathscr{A}_n$, and for $A\in \mathscr{A}_n$, we construct $j_n(A) \in Z$, such that 
	\begin{equation}
		\Delta(A) \leq 2 r^{-j_n(A)}.
	\end{equation}
	
	We start with $\mathcal{A}_0=\{T\}$ and $j_0(T)=\max\{j_0\in\mathbb{Z}, \Delta(T)\leq2 r^{-j_0}\}$. Having constructed $\mathcal{A}_n$, we construct $\mathcal{A}_{n+1}$ as follows: for each $B \in \mathcal{A}_n$, we use Lemma \ref{l2.9.4} with $j=j_n(B)$ to split $B$ into sets $\left(A_{\ell}\right)_{\ell \leq 2^{2^n}}$. If $A_{\ell}$ satisfies (\ref{2.9.4.01}), we set $j_{n+1}\left(A_{\ell}\right)=j_n(B)+1$, and otherwise we set $j_{n+1}\left(A_{\ell}\right)=j_n(B)$. 
	
	The sequence thus constructed is admissible, since each set $B$ in $\mathcal{A}_n$ is split into at most $2^{2^n}$ sets and $({2^{2^n}})^2 \leq 2^{2^{n+1}}$. We note also by construction that if $B \in \mathcal{A}_n$ and $A \subset B, A \in \mathcal{A}_{n+1}$, then
	\begin{itemize}
		\item Either $j_{n+1}(A)=j_n(B)+1$,
		\item Or else $j_{n+1}(A)=j_n(B)$ and
		\begin{equation}\label{largepiece}
			F(B \cap B(t,2 r^{-j_{n+1}(A)-2})) \leq F(B)-c^* \varphi^{*(-1)}(2^n) r^{- j_{n+1}(A)-1}.
		\end{equation}
	\end{itemize}
	
	Now, it suffices to show that for a fixed $t \in T$,
	\begin{equation}\label{firststep}
		\sum_{n \geqslant k_p} \varphi^{*(-1)}(2^{n}) \Delta\left(A_n(t)\right) \leqslant \frac{L r}{c^*} F(T)+Lr \Delta(T).
	\end{equation}
	
	Let $j(n)=j_n(A_n(t))$, and by the construction of the admissible sequence $\mathcal{A}_n$, we know that 
	\begin{equation}\label{DeltaA}
		\Delta(A_n(t)) \leq 2 r^{-j(n)}.
	\end{equation}
	
	Let $a(n)=\varphi^{*(-1)}\left(2^n\right) r^{-j(n)}$,  and $M_2$ be the constant defined in (\ref{0905}) and then take 
	$$
	I=\left\{n \geqslant 0; \text{for all}\,\, k \geqslant 0 ,  n \neq k , a(k)<a(n)\left(\frac{1}{1-M_2}\right)^{|n-k|}\right\}.
	$$
	
	Then by Lemma \ref{l2.9.5},
	$$
	\sum_{n \geqslant k_p} \varphi^{*(-1)}(2^{n}) \Delta\left(A_n(t)\right)\leq\sum_{n \geqslant 0} \varphi^{*(-1)}(2^{n}) \Delta\left(A_n(t)\right) \leq \frac{2}{M_2}\sum_{n \in I} \varphi^{*(-1)}(2^{n}) \Delta\left(A_n (t)\right) .
	$$
	
	Observe that by the definition of $j_0$, we know that $2r^{-j_0}\leq r\Delta(T)$. Hence, $a(0)=\varphi^{*(-1)}(1)r^{-j_0}\leq \varphi^{*(-1)}(1)r\Delta(T)/2$, so we only need to show that  
	\begin{equation}\label{secondstep}
		\sum_{n \in I \backslash\{0\}}\varphi^{*(-1)}(2^{n}) \Delta\left(A_n (t)\right)\leq\frac{1}{2}\sum_{n \in I \backslash\{0\}} a(n) \leqslant \frac{L r}{c^*} F(T).
	\end{equation}
	
	Note that if $n \in I $, then 
	$$a(n+1)<\frac{1}{1-M_2} a(n),\quad a(n-1)<\frac{1}{1-M_2} a(n).$$
	Also, we know that 
	$$
	a(n+1)=\frac{\varphi^{*(-1)}\left(2^{n+1}\right)}{\varphi^{*(-1)}\left(2^n\right)} r^{j(n)-j(n+1)}a(n) .
	$$
	
	Therefore, 
	\begin{align*}
		\frac{1}{1-M_2} r^{j(n)-j(n+1)}a(n)&<\frac{\varphi^{*(-1)}\left(2^{n+1}\right)}{\varphi^{*(-1)}\left(2^n\right)} r^{j(n)-j(n+1)}a(n)\\
		&=a(n+1)<\frac{1}{1-M_2} a(n),
	\end{align*}
	
	\begin{align*}
		a(n-1)<\frac{1}{1-M_2}a(n)&=\frac{1}{1-M_2}\frac{\varphi^{*(-1)}\left(2^{n}\right)}{\varphi^{*(-1)}\left(2^{n-1}\right)} r^{j(n-1)-j(n)}a(n-1)\\
		&\leq\left(\frac{\varphi^{*(-1)}\left(2^{n}\right)}{\varphi^{*(-1)}\left(2^{n-1}\right)}\right)^2 r^{j(n-1)-j(n)}a(n-1)\\
		&\leq4r^{j(n-1)-j(n)}a(n-1).
	\end{align*}
	
	The observation above implies that $j(n+1)=j(n)+1 $ and $ j(n-1)=j(n)$.
	Hence, when $n \geq k_p$, then let $I \backslash\{0\}$ enumerated as as $n_1<n_2<\ldots$, so that  
	\begin{equation}\label{2.9.2}
		j\left(n_k+1\right)=j\left(n_k\right)+1 ;\quad j\left(n_k-1\right)=j\left(n_k\right) .
	\end{equation}
	As a consequence, 
	\begin{equation}\label{jjj}
		j\left(n_{k+2}\right) \geq j\left(n_{k+1}\right)+1 \geq j\left(n_k\right)+2.
	\end{equation}
	
	For simplicity, denote $F\left(A_n(t)\right)$ by  $f(n)$. Then $f(0)=F(T)$ and the sequence $(f(n))_{n\geq0}$ is decreasing because $A_n(t)\subset A_{n-1}(t)$. Then, the key to derive (\ref{secondstep}) is to prove that for $k\geq1$, 
	\begin{equation}\label{thirdstep}
		a\left(n_k\right) \leq \frac{L r}{c^*}\left(f\left(n_{k}-1\right)-f\left(n_{k+2}\right)\right).
	\end{equation}
	For $k\geq2$, $f(n_k-1)\leq f(n_{k-1})$, so that (\ref{thirdstep}) implies that 
	\begin{equation*}
		a\left(n_k\right) \leq \frac{L r}{c^*}\left(f\left(n_{k-1}\right)-f\left(n_{k+2}\right)\right).
	\end{equation*}
	Summation overall $k\geq2$ gives 
	\begin{equation}\label{sumkgeq2}
		\sum_{k\geq2}a(n_k)\leq\frac{Lr}{c^*}F(T),
	\end{equation}
	and then a combination of (\ref{sumkgeq2}) and (\ref{thirdstep}) for $k=1$ concludes the proof of the theorem when $I$ is infinite.
	The case when $I$ is finite will be proven at the end of the proof.
	
	So now let us roll up our sleeves to handle (\ref{thirdstep}).
	
	Since $n_k \geq 1$, applying (\ref{largepiece}) to $A=A_{n_k}(t)$ and $B=A_{n_k-1}(t)$, thus we get
	\begin{equation}
		\begin{aligned}
			& F\left(B \cap B\left(t, 2 r^{-j_{n_k}(A)-2}\right)\right) \\
			& \leqslant F(B)-c^* \varphi^{*(-1)}\left(2^{n_k-1}\right) r^{-j_{n_k}(A)-1} \\
			&=F(B)-c^* \frac{\varphi^{*(-1)}\left(2^{n_k-1}\right)}{\varphi^{*(-1)}\left(2^{n_k}\right)} \varphi^{* (-1)}\left(2^{n_k}\right) r^{-j_{n_k}(A)-1} \\
			& =F(B)-c^* \frac{\varphi^{*(-1)}\left(2^{n_k-1}\right)}{\varphi^{*(-1)}\left(2^{n_k}\right)}  a(n_k) \cdot r^{-1}.
		\end{aligned}
	\end{equation}
	
	Combined with Lemma \ref{inv},  we obtain that
	\begin{align}\label{2.9.7}
		a\left(n_k\right) &\leq \frac{ r}{c^*}\frac{\varphi^{*(-1)}\left(2^{n_k}\right)}{\varphi^{*(-1)}\left(2^{n_k-1}\right)}  \left(F(B)-F\left(B \cap B\left(t, 2 r^{-j_{n_k}(A)-2}\right)\right)\right)\nonumber\\
		&\leq \frac{2r}{c^*} \left(F(B)-F\left(B \cap B\left(t, 2 r^{-j_{n_k}(A)-2}\right)\right)\right).
	\end{align}

	Moreover, recalling that $j\left(n_{k+2}\right)=j_{n_{k+2}}\left(A_{n_{k+2}}(t)\right)$, hence a combination with (\ref{jjj}) gives 
	$\Delta\left(A_{n_{k+2}}(t)\right) \leq 2 r^{-j\left(n_{k+2}\right)} \leq 2 r^{-j\left(n_k\right)-2}$. So $A_{n_{k+2}}(t) \subset B \cap B\left(t, 2 r^{-j\left(n_k\right)-2}\right)$ and then
	\begin{align}\label{fnk+2}
		f\left(n_{k+2}\right)\leq F\left(B \cap B\left(t, 2 r^{-j_{n_k}(A)-2}\right)\right).
	\end{align}
	
	Recalling that $F(B)=f(n_k-1)$, combined with (\ref{2.9.7}) and (\ref{fnk+2}), we have 
	$$
	a\left(n_k\right) \leqslant \frac{L r}{c^*}\left(f\left(n_{k}-1\right)-f\left(n_{k+2}\right)\right).
	$$
	then we finish the proof of (\ref{gc}) when $I$ is infinite.
	
	When $I$ is finite, we denote the largest element of $I$ as $n_{\bar{k}}$. For $k\leq\bar{k}-2$, $a(n_k)$ is controlled by previous argument. For $k=\bar{k}-1$ and $k=\bar{k}$, we use the fact that for $n\geq0$,
	\begin{equation}\label{onean}
		a(n)\leq\frac{Lr}{c^*}F(T)+L\Delta(T).
	\end{equation}
	
	For $n\geq1$ and $j(n-1)=j(n)$, use (\ref{largepiece}) for $n-1$ rather than $n$ yields (\ref{onean}). For $n\geq1$ and $j(n-1)=j(n)-1$,
	\begin{align*}
		a(n)&=\frac{\varphi^{*(-1)}\left(2^{n}\right)}{\varphi^{*(-1)}\left(2^{n-1}\right)}r^{-1}a(n-1)\\
		&\leq\frac{2}{r}a(n-1)<a(n-1).
	\end{align*}
	Iterate this relation until we reach an integer $n^*$ with 
	\begin{itemize}
		\item either $j(n^*)=j(n^*-1)$,
		\item or $n^*=0$.
	\end{itemize}
	
	Note that $a(0)\leq L\Delta(T)$, the case when $I$ is finite is done. We finally prove the theorem.
\end{proof}

With Theorem $\ref{th 2.9.1}$ in hand, we obtain the equivalence of $\gamma_{\varphi,p}(T, d)$ and $\tilde\gamma_{\varphi,p}(T, d)$ under proper conditions.

\begin{theorem}\label{eq1}
	For an Orlicz $N$-function $\varphi$ satisfying $Q-$condition and $\Delta_2-$condition,
	$$\gamma_{\varphi,p}(T, d)\asymp \tilde\gamma_{\varphi,p}(T, d).$$
\end{theorem}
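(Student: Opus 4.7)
The plan is to prove the two directions $\tilde\gamma_{\varphi,p}(T, d) \leq \gamma_{\varphi,p}(T, d)$ and $\gamma_{\varphi,p}(T, d) \lesssim \tilde\gamma_{\varphi,p}(T, d)$ separately.

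For the easy direction $\tilde\gamma_{\varphi,p}(T, d) \leq \gamma_{\varphi,p}(T, d)$, I would turn any admissible sequence of partitions into an admissible sequence of nets by choosing representatives. Given $(\mathcal{A}_n)_{n \geq 0}$, pick any $t_A \in A$ for each $A \in \mathcal{A}_n$ and set $T_n := \{t_A : A \in \mathcal{A}_n\}$; then $|T_n| \leq 2^{2^n}$ and $d(t, T_n) \leq d(t, t_{A_n(t)}) \leq \Delta(A_n(t))$ for every $t \in T$. Multiplying by $\varphi^{*(-1)}(2^n)$, summing over $n \geq k_p$, taking $\sup_t$, and passing to the infima on both sides delivers the inequality.

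For the reverse direction the strategy is to feed $\tilde\gamma_{\varphi,p}$ into the generic-chaining machinery. Proposition \ref{growth} tells us that the set-valued functional $F(H) := \tilde\gamma_{\varphi,p}(H, d)$ satisfies the generalized growth condition with parameters $r \geq 16$ and $c^* = 1/8$. Together with the $\Delta_2$-condition on $\varphi$, Theorem \ref{th 2.9.1} applied with this $F$ immediately outputs
$$\gamma_{\varphi,p}(T, d) \leq \frac{L r}{c^*}\, \tilde\gamma_{\varphi,p}(T, d) + L r\, \Delta(T).$$

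The main obstacle is the residual $L r\, \Delta(T)$ term, which a priori need not be comparable to $\tilde\gamma_{\varphi,p}(T, d)$ since $\Delta(T)$ is purely geometric (e.g.\ when $T$ is a fine discretization of an interval and $k_p$ is large, $\tilde\gamma_{\varphi,p}(T,d)$ can be much smaller than $\Delta(T)$). I would dispose of it by a mild refinement of the construction inside the proof of Theorem \ref{th 2.9.1}: rather than initializing at level $0$ with the trivial partition $\mathcal{A}_0 = \{T\}$, I would initialize at level $k_p$ using the Voronoi partition generated by an $\varepsilon$-almost minimizing net $T_{k_p}^\star$ for the definition of $\tilde\gamma_{\varphi,p}(T, d)$. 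Since $|T_{k_p}^\star| \leq 2^{2^{k_p}}$, this partition is admissible at level $k_p$, and each of its cells has diameter at most $2(e_{k_p}(T) + \varepsilon)$. Re-running the splitting scheme of Lemma \ref{l2.9.4} and the bookkeeping of Lemma \ref{l2.9.5} from this starting point lets the initial contribution $a(k_p)$ replace the role of $a(0)$ in the original proof, so the correction term becomes of order $\varphi^{*(-1)}(2^{k_p})\, e_{k_p}(T)$ rather than $\Delta(T)$. This new correction is itself bounded by $\tilde\gamma_{\varphi,p}(T, d)$ through the elementary lower bound
$$\tilde\gamma_{\varphi,p}(T, d) \;\geq\; \varphi^{*(-1)}(2^{k_p})\, e_{k_p}(T),$$
obtained by keeping only the $n = k_p$ term and the infimum over $T_{k_p}$ in the definition of $\tilde\gamma_{\varphi,p}$. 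Sending $\varepsilon \to 0$ and combining the two bounds yields $\gamma_{\varphi,p}(T, d) \lesssim \tilde\gamma_{\varphi,p}(T, d)$, completing the equivalence.
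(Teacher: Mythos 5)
Your overall strategy matches the paper's: the easy direction is immediate by selecting representatives, and the hard direction combines Proposition~\ref{growth} with Theorem~\ref{th 2.9.1}. However, you have spotted something the paper's own proof glosses over. The paper concludes directly from
\[
\gamma_{\varphi,p}(T,d)\le \frac{Lr}{c^*}\tilde\gamma_{\varphi,p}(T,d)+Lr\Delta(T)
\]
that $\gamma_{\varphi,p}(T,d)\lesssim\tilde\gamma_{\varphi,p}(T,d)$, implicitly asserting $\Delta(T)\lesssim\tilde\gamma_{\varphi,p}(T,d)$. As you observe, this is not true in general once $p>1$: if $\operatorname{card}(T)\le 2^{2^{k_p}}$ one can take $T_n=T$ for all $n\ge k_p$ and get $\tilde\gamma_{\varphi,p}(T,d)=0$ while $\Delta(T)>0$, and more generally $e_{k_p}(T)$ can be arbitrarily small relative to $\Delta(T)$. (When $p=1$, $k_p=0$ and a two-point net cannot beat $\Delta(T)/4$, so the paper's inequality does hold in that case; the issue is genuinely a $p>1$ phenomenon.) Your diagnosis of the residual term is therefore a real improvement on the argument as written.

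Your proposed patch is the right idea and essentially the standard repair: initialize the partitioning scheme at level $k_p$ with a Voronoi partition induced by a near-optimal $T_{k_p}^\star$, keep $\mathcal A_n=\{T\}$ for $n<k_p$ so the sequence remains admissible, and replace $a(0)\lesssim\Delta(T)$ by $a(k_p)\lesssim\varphi^{*(-1)}(2^{k_p})\,e_{k_p}(T)$. The lower bound $\tilde\gamma_{\varphi,p}(T,d)\ge\varphi^{*(-1)}(2^{k_p})\,e_{k_p}(T)$ you invoke is correct (keep only the $n=k_p$ term in the defining infimum). What remains to make this airtight is to check that the iteration of Lemma~\ref{l2.9.4} and the bookkeeping via Lemma~\ref{l2.9.5} genuinely close up when started at level $k_p$: in particular, one must verify that $j(k_p)$ can be chosen with $r^{-j(k_p)}\le r\,(e_{k_p}(T)+\varepsilon)$ while still satisfying $\Delta(A_{k_p}(t))\le 2r^{-j(k_p)}$, and that the growth condition, which in Definition~4.5 is only required for $n\ge 1+k_p$, is applied only at those levels inside the induction. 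These are bookkeeping checks rather than new ideas, but they do need to be carried out, and you have left them at the sketch level. In short: your first direction agrees with the paper, your second direction identifies and credibly repairs a gap in the paper's one-line conclusion, and the remaining work is to fully rerun the partition scheme from level $k_p$.
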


\begin{proof}
		It is easy to see that 
	$$ \tilde\gamma_{\varphi,p}(T, d)\lesssim \gamma_{\varphi,p}(T, d). $$
	Hence, it suffices to prove that 
    $$\gamma_{\varphi,p}(T, d)\lesssim \tilde\gamma_{\varphi,p}(T, d). $$
    By Proposition \ref{growthh}, we know that $\tilde\gamma_{\varphi,p}(T,d)$ satisfies the generalized growth condition. By Theorem \ref{th 2.9.1} , we know that  
\begin{align*}
 \gamma_{\varphi}\left(T , d\right) &\leq \frac{L r}{c^*} \tilde\gamma_{\varphi,p}(T,d)+ L r \Delta(T)\\
 &\lesssim \tilde\gamma_{\varphi,p}(T, d). 
\end{align*}
This finally completes the proof.

\end{proof}

\section{ Upper bounds and tails bounds via generic chaining}\label{secR}
\setcounter{equation}{0}
\renewcommand{\theequation}{\thesection.\arabic{equation}}

In the following,  $||\cdot||_p\,\,(p\geq1)$ denotes the $L_p$ norm of a random variable. $T$ is the index set of $\varphi$-sub-Gaussian process $(X_t)_{t\in T}$. $d$ is a metric on $T$, and defined by $d(s,t)=\tau_\varphi(X_t- X_s)$. $\displaystyle\Delta(S):=\sup_{s,t\in S}d(s,t)$ denotes the diameter of the a set $S\in T$ with metric $d$. For an admissible sequence $T_n$, $\pi_n(t):=\mathop{\arg\min}\limits_{s \in T_n} d(s, t)$. $a\vee b=\max\{a,b\}$ for $a,b\in\mathbb{R}$. $\langle\cdot,\cdot\rangle$ represents inner product. $A^t$ denotes the transpose of a matrix $A$.

\begin{theorem}\label{dks}
	For $\varphi$-sub-Gaussian process $(X_{t})_{t\in T}$, we have, for any given $1 \leq p<\infty$, 
	\begin{equation}\label{p3.1}
		\begin{aligned}
			\left(\textsf{E}\sup_{t \in T}|X_t|^p\right)^{\frac{1}{p}}\leq C_0\tilde\gamma_{\varphi,p}(T,d)+\inf_{t_0\in T}\left\{2\sup_{t\in T}\left(\textsf{E}|X_{t}-X_{t_0}|^p\right)^{\frac{1}{p}}+(\textsf{E}|X_{t_0}|^p)^{1/p}\right\}.
		\end{aligned}
	\end{equation}
	Here, $C_0$ is a universal constant. As a consequence, we have,
	\begin{equation}\label{p3.1}
		\begin{aligned}
			\left(\textsf{E}\sup_{t \in T}|X_t|^p\right)^{\frac{1}{p}}\leq C_1\tilde\gamma_{\varphi,p}(T,d)+C_2\sqrt{p}\Delta(T)+C_3p\Delta(T),
		\end{aligned}
	\end{equation}
	with $C_1, C_2, C_3$ some positive universal constants. Therefore, for $u\geq\sqrt{2},$
	\begin{equation}\label{t06}
		\begin{aligned}
			\textsf{P}\left(\sup_{t\in T}\left|X_t-X_{t_0}\right|\geq C_4\Delta(T)\varphi^*(u)+C_5\Delta(T)\sqrt{\varphi^*(u)}+C_6\tilde\gamma_{\varphi,p}(T,d)\right)\leq\exp(-\varphi^*(u)),
		\end{aligned}
	\end{equation}
	where $C_4, C_5, C_6$ are positive universal constants.
\end{theorem}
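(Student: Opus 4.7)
The plan is to deploy generic chaining for the main inequality (\ref{p3.1}), derive the explicit consequence by bounding single-increment $L^p$ moments via (\ref{2.1}), and then pass to the tail bound (\ref{t06}) by Markov's inequality.

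For the main inequality, I would fix an admissible sequence $(T_n)_{n\geq 0}$ that realizes $\tilde\gamma_{\varphi,p}(T,d)$ up to a universal constant, and let $\pi_n(t)\in\arg\min_{s\in T_n}d(s,t)$. For any $t_0\in T$, the triangle inequality combined with Minkowski gives
$$\bigl\|\sup_t|X_t|\bigr\|_p \leq \|X_{t_0}\|_p + \bigl\|\sup_t|X_{\pi_{k_p}(t)}-X_{t_0}|\bigr\|_p + \sum_{n>k_p}\bigl\|\sup_t|X_{\pi_n(t)}-X_{\pi_{n-1}(t)}|\bigr\|_p.$$
The first term gives $(\mathsf{E}|X_{t_0}|^p)^{1/p}$, and the second, being a supremum over only $|T_{k_p}|\leq 2^{2^{k_p}}\leq 2^p$ values, is at most $2\sup_t\|X_t-X_{t_0}\|_p$ via the crude estimate $\|\max_{i\leq N}|Y_i|\|_p\leq N^{1/p}\max_i\|Y_i\|_p$. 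This is precisely why $k_p=\lfloor\log_2 p\rfloor$ is the correct starting level; taking the infimum over $t_0$ then matches the inf-term of (\ref{p3.1}).

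For each chain-level summand I would invoke the sub-$\varphi$-Gaussian maximal inequality: for $\varphi$-sub-Gaussian $(Y_i)_{i\leq N}$ with $\tau_\varphi(Y_i)\leq\tau$ and any $q\geq p$,
$$\bigl\|\max_{i\leq N}|Y_i|\bigr\|_p \leq N^{1/q}\max_i\|Y_i\|_q \leq C\,N^{1/q}\varphi^{*(-1)}(q)\,\tau.$$
The choice $q=\log N$ yields the sharp bound $\|\max_i|Y_i|\|_p\leq Ce\,\varphi^{*(-1)}(\log N)\,\tau$, valid whenever $p\leq\log N$. Applied to the pairs $(s,s')\in T_n\times T_{n-1}$ --- of which there are $N_n\leq 2^{2^{n+1}}$ --- using that $n>k_p$ forces $p\leq 2^n\leq\log N_n$, together with $d(\pi_n(t),\pi_{n-1}(t))\leq 2d(t,T_{n-1})$ and Lemma \ref{inv} applied to $\varphi^{*(-1)}(2^{n+1}\log 2)\leq 2\log 2\cdot\varphi^{*(-1)}(2^n)$, this produces
$$\bigl\|\sup_t|X_{\pi_n(t)}-X_{\pi_{n-1}(t)}|\bigr\|_p \leq C\,\varphi^{*(-1)}(2^n)\sup_t d(t,T_{n-1}).$$
Summing over $n>k_p$ collapses to $\leq C_0\tilde\gamma_{\varphi,p}(T,d)$ directly from the definition, which together with the previous display gives (\ref{p3.1}).

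The explicit consequence then follows by applying (\ref{2.1}) to control $\sup_t\|X_t-X_{t_0}\|_p\leq C\varphi^{*(-1)}(p)\Delta(T)$, together with the elementary estimate $\varphi^{*(-1)}(p)\leq C(\sqrt{p}+p)$ for $p\geq 1$, which follows from the $Q$-condition at $0$ (forcing $\varphi^*(x)\gtrsim x^2$ near $0$) and the $N$-function growth at infinity (forcing $\varphi^*(x)\gtrsim x$ for large $x$). The tail bound (\ref{t06}) then follows from Markov's inequality at $p=\varphi^*(u)$: since $\varphi^{*(-1)}(\varphi^*(u))=u$, the $L^p$ bound combined with $\mathsf{P}(Y\geq e\|Y\|_p)\leq e^{-p}$ produces the claimed exponential tail, with the assumption $u\geq\sqrt{2}$ ensuring $p\geq 1$. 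The principal obstacle is establishing the sub-$\varphi$-Gaussian maximal inequality with the sharp constant $\varphi^{*(-1)}(\log N)$, which requires a careful layer-cake argument against (\ref{2.1}) that retains the correct $n$-dependence; once it is available, the remainder of the chaining telescopes cleanly from admissibility and Lemma \ref{inv}.
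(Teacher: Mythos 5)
Your decomposition via the triangle inequality, the treatment of the base level $\pi_{k_p}$ by the crude cardinality bound, and the choice $q=\varphi^*(u)$ in the final Markov step all match the paper's architecture. However, your handling of the core chaining sum has a genuine gap that the paper's good-event argument is specifically designed to avoid.

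You apply Minkowski to get $\bigl\|\sup_t|X_t-X_{\pi_{k_p}(t)}|\bigr\|_p\le\sum_{n>k_p}\bigl\|\sup_t|X_{\pi_n(t)}-X_{\pi_{n-1}(t)}|\bigr\|_p$, then bound each level by the maximal inequality to get $\lesssim\sum_{n>k_p}\varphi^{*(-1)}(2^n)\sup_t d(t,T_{n-1})$, and claim this "collapses to $\tilde\gamma_{\varphi,p}$ directly from the definition." It does not: the definition of $\tilde\gamma_{\varphi,p}$ has the supremum over $t$ \emph{outside} the sum over $n$, i.e.\ $\sup_t\sum_n\varphi^{*(-1)}(2^n)d(t,T_n)$, whereas your bound is $\sum_n\varphi^{*(-1)}(2^n)\sup_t d(t,T_n)$. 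The latter is the Dudley-type entropy bound $\sum_n\varphi^{*(-1)}(2^n)e_n(T)$ (up to optimizing the $T_n$), which dominates $\tilde\gamma_{\varphi,p}$ but can exceed it by an unbounded factor --- indeed the entire raison d'\^{e}tre of the $\gamma$-functional is that it captures geometries where this interchange of $\sup$ and $\sum$ loses badly (e.g.\ ellipsoids for the Gaussian case). The paper sidesteps this by first establishing a high-probability event $\bigcap_n\Omega_{u,p,n}$ on which the pointwise bound $|X_{\pi_n(t)}-X_{\pi_{n-1}(t)}|\le\varphi^{*(-1)}(2^nu)d(\pi_n(t),\pi_{n-1}(t))$ holds for \emph{every} $t$ and $n$ simultaneously; only then is the sum over $n$ taken for each fixed $t$, and only then the supremum over $t$, which keeps the sup outside the sum and delivers $\tilde\gamma_{\varphi,p}$. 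This ordering --- bad-event probability first, then deterministic summation, then supremum --- is the step your Minkowski route skips, and it cannot be recovered from the maximal inequality alone.

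Two smaller points. First, your justification that "$\varphi^{*(-1)}(p)\le C(\sqrt p+p)$ follows from the $Q$-condition forcing $\varphi^*(x)\gtrsim x^2$ near $0$" is backwards: the $Q$-condition $\liminf_{x\to0}\varphi(x)/x^2=c>0$ gives a \emph{lower} bound on $\varphi$ near zero, hence an \emph{upper} bound $\varphi^*(y)\lesssim y^2$, not the lower bound you want. For $p\ge1$ you do not in fact need the near-zero behaviour at all: since $\varphi^*$ is convex with $\varphi^*(0)=0$, one has $\varphi^*(p)\ge p\varphi^*(1)$ for $p\ge1$, whence $\varphi^{*(-1)}(q)\lesssim q$, which already suffices. (The $\sqrt p$ term in the paper's bound comes from a finer split of the integral using the standardization $\varphi(x)=x^2/2$ for $|x|<\sqrt2$, a normalization the paper invokes explicitly.) Second, the sub-$\varphi$-Gaussian maximal inequality $\|Y_i\|_q\lesssim\varphi^{*(-1)}(q)\tau$ you invoke is true and provable by the layer-cake argument you gesture at, but as noted above it only yields the entropy bound once fed into Minkowski, not the theorem as stated.
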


\begin{corollary}\label{signalrlemma}
	For  $\varphi$-sub Gaussian process $(X_t)_{t\in T}$, we have, for any given $1\leq p<\infty$,
	\begin{equation*}
		(\textsf{E}|X_t|^p)^{1/p}\leq C_4\Delta(T)p+C_4\Delta(T)p^{1/2}.
	\end{equation*}
\end{corollary}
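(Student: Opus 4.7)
The plan is to derive the bound by invoking the incremental inequality (\ref{2.1}) -- or equivalently, by specializing the tail inequality (\ref{t06}) from Theorem \ref{dks} to the trivial case of a single variable, in which $\tilde{\gamma}_{\varphi,p}(T,d)$ drops out. Under the natural convention $X_{t_0} \equiv 0$ for some reference point $t_0 \in T$, we may interpret $X_t$ as the increment $X_t - X_{t_0}$, whose $\varphi$-sub-Gaussian parameter satisfies $\tau_\varphi(X_t - X_{t_0}) = d(t, t_0) \leq \Delta(T)$. Applying (\ref{2.1}) and using the monotonicity of $\varphi^*$ on $[0, \infty)$ yields
$$\mathsf{P}\bigl(|X_t| \geq u\Delta(T)\bigr) \leq 2\exp\bigl(-\varphi^*(u)\bigr), \quad u > 0.$$

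Next I would recast this in a form revealing the $p + \sqrt{p}$ structure. Substituting $v = \varphi^*(u)$, which is legitimate since $\varphi^*$ is strictly increasing and continuous on $[0,\infty)$, and invoking the two-sided bound $\varphi^{*(-1)}(v) \leq C(\sqrt{v} + v)$ -- which follows from the $Q$-condition on $\varphi$ (giving $\varphi^*(u) \gtrsim u^2$ near the origin and hence $\varphi^{*(-1)}(v) \lesssim \sqrt{v}$ for small $v$) together with the super-linear growth of the Orlicz $N$-function $\varphi$ (implying $\varphi^{*(-1)}(v) \lesssim v$ for large $v$) -- gives
$$\mathsf{P}\bigl(|X_t| \geq C\Delta(T)(\sqrt{v} + v)\bigr) \leq 2 e^{-v}.$$
Then the standard tail-to-moment formula $\mathsf{E}|X_t|^p = \int_0^\infty p s^{p-1}\mathsf{P}(|X_t| \geq s)\,ds$, together with the change of variable $s = C\Delta(T)(\sqrt{v} + v)$, reduces the problem to bounding the two gamma-type integrals $\int_0^\infty v^{p-1}e^{-v}\,dv = \Gamma(p) \lesssim p^p$ and $\int_0^\infty v^{(p-1)/2}e^{-v}\,dv \lesssim p^{p/2}$, via Stirling's formula. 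Summing these contributions and taking $p$-th roots produces the stated bound $\bigl(\mathsf{E}|X_t|^p\bigr)^{1/p} \lesssim \Delta(T)(p + \sqrt{p})$.

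The step I expect to be the main obstacle is establishing the two-sided bound $\varphi^{*(-1)}(v) \lesssim \sqrt{v} + v$; this is exactly what produces the two separate contributions $p$ and $\sqrt{p}$ on the right-hand side, and relies on combining the $Q$-condition (for the sub-Gaussian regime near the origin) with the Orlicz $N$-function super-linearity (for the near-linear regime at infinity). Once this dual control is in place, the rest is a routine integration.
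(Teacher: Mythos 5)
Your overall strategy --- start from the incremental tail bound $\mathsf{P}(|X_t|\ge u\Delta(T))\le 2\exp(-\varphi^*(u))$, change variables via $\varphi^*$, and split into a near-Gaussian regime and a sub-exponential regime to produce $\Gamma(p/2)$- and $\Gamma(p)$-type integrals --- is exactly what the paper does in deriving \eqref{appsrt}. The large-$v$ half of your key bound, $\varphi^{*(-1)}(v)\lesssim v$, is also justified correctly via the $N$-function property of $\varphi^*$ (Lemma \ref{book} gives $\varphi^*(u)\ge cu$ for $u>1$, equivalently $\varphi^*(u)/u$ nondecreasing). But the small-$v$ half contains a genuine error.

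You claim the $Q$-condition gives $\varphi^*(u)\gtrsim u^2$ near the origin. It gives the opposite. The $Q$-condition states $\liminf_{x\to 0}\varphi(x)/x^2=c>0$, i.e.\ $\varphi(x)\gtrsim x^2$ for small $x$ --- a \emph{lower} bound on $\varphi$. The Young--Fenchel transform reverses inequalities ($\varphi_1\le\varphi_2\Rightarrow\varphi_1^*\ge\varphi_2^*$), so a lower bound on $\varphi$ near zero yields the \emph{upper} bound $\varphi^*(u)\lesssim u^2$ for small $u$, hence $\varphi^{*(-1)}(v)\gtrsim\sqrt{v}$ --- the wrong direction for your purposes. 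To get $\varphi^{*(-1)}(v)\lesssim\sqrt{v}$ you need an \emph{upper} bound $\varphi(x)\lesssim x^2$ near $0$, and a $\liminf$ condition simply does not supply one, since $\limsup_{x\to 0}\varphi(x)/x^2$ can be infinite for a convex Orlicz $N$-function. What the paper actually uses is the standardization of the Orlicz $N$-function from Buldygin--Kozachenko \cite{BK} (p.~67), which allows one to assume WLOG that $\varphi(x)=x^2/2$ for $|x|<\sqrt{2}$; this gives $\varphi^*(u)=u^2/2$ exactly for small $u$, i.e.\ a two-sided quadratic control near the origin. That is the missing ingredient in your argument. (A shortcut that avoids the issue altogether: for $u<1$ just use $\mathsf{P}(|X_t|\ge u\Delta(T))\le 1$, which contributes $O(\Delta(T))$ to $(\mathsf{E}|X_t|^p)^{1/p}$ and is absorbed into the $\Delta(T)\sqrt{p}$ term, so the refined quadratic behavior of $\varphi^*$ near $0$ is not strictly needed to obtain the stated, somewhat lossy, bound.)
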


Before starting to prove our results, we need some lemmas.

\begin{lemma}\label{DA1} (Foucart and Rauhut  \cite{FR})
	If a random variable $\xi$ satisfies
	$$\left(\textsf{E}\left|\xi\right|^p\right)^{\frac{1}{p}} \leq c_1 p+c_2 \sqrt{p}+c_3, \quad \text { for all } \quad p \geq 1,$$
	for some $0 \leq c_1, c_2, c_3<\infty$, then
	$$\textsf{P}\left(\left|\xi\right| \geq e\left(c_1 u+c_2 \sqrt{u}+c_3\right)\right) \leq \exp (-u) \quad(u \geq 1).$$
\end{lemma}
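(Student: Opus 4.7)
The plan is to convert the moment bound into a tail bound by applying Markov's inequality to $|\xi|^p$ and then choosing $p$ to match the target threshold. The hypothesis supplies a moment bound that is valid for every $p\ge 1$, so the freedom to optimize $p$ is exactly what is needed.

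First I would write Markov's inequality in the $p$-th moment form: for any $t>0$ and $p\ge 1$,
\begin{equation*}
\textsf{P}(|\xi|\ge t)=\textsf{P}(|\xi|^p\ge t^p)\le\frac{\textsf{E}|\xi|^p}{t^p}\le\left(\frac{c_1 p+c_2\sqrt{p}+c_3}{t}\right)^p,
\end{equation*}
where the last step uses the hypothesis. The aim is then to set $t=e(c_1 u+c_2\sqrt{u}+c_3)$, and the natural choice is $p=u$. Since $u\ge 1$, this value of $p$ lies in the range where the hypothesis applies, so no extra care is needed at the boundary.

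With $p=u$, the numerator $c_1 p+c_2\sqrt{p}+c_3$ becomes exactly $c_1 u+c_2\sqrt{u}+c_3$, and the ratio inside the parenthesis collapses to $1/e$. Hence the bound simplifies to
\begin{equation*}
\textsf{P}\bigl(|\xi|\ge e(c_1 u+c_2\sqrt{u}+c_3)\bigr)\le e^{-u},
\end{equation*}
which is precisely the claimed tail inequality.

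There is essentially no obstacle here; the only point worth checking is the admissibility of the choice $p=u$, which is guaranteed by $u\ge 1$. The entire argument is a one-line Markov-plus-optimization calculation, so the proposal is really just to record the calculation cleanly. The same strategy shows more generally that any family of $L_p$ bounds of the form $\|\xi\|_p\le f(p)$ with $f$ at most linear in $p$ yields a sub-exponential tail upon choosing $p$ proportional to the deviation level.
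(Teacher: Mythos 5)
Your proof is correct and is essentially the standard argument behind the cited result of Foucart and Rauhut (which the paper quotes without proof): Markov's inequality applied to $|\xi|^p$ followed by the choice $p=u$, admissible since $u\ge 1$, yields the factor $e^{-u}$ exactly as you compute. No gap to report.
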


\begin{lemma}\label{EJPA.5} (Dirksen \cite{SD})
	Fix $1 \leq p<\infty$ and $0<\alpha<\infty$. Let $\gamma \geq 0$ and suppose that $\xi$ is a positive random variable such that for some $c \geq 1$ and $u_*>0$,
	$$
	\textsf{P}(\xi>\gamma u) \leq c \exp \left(-p u^\alpha / 4\right) \quad\left(u \geq u_*\right) .
	$$
	Then, for a constant $\tilde{c}_\alpha>0$ depending only on $\alpha$,
	$$
	\left(\textsf{E} \xi^p\right)^{1 / p} \leq \gamma\left(\tilde{c}_\alpha c+u_*\right) .
	$$
\end{lemma}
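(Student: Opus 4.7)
The plan is to pass from the tail bound to the $p$-th moment by the standard tail-integral formula and then split the integration at the threshold $u_*$. Concretely, I would start from
\begin{equation*}
\textsf{E}\xi^p=\int_0^{\infty}p t^{p-1}\textsf{P}(\xi>t)\,dt,
\end{equation*}
substitute $t=\gamma u$ (if $\gamma=0$ the conclusion is trivial, so assume $\gamma>0$), and obtain $\textsf{E}\xi^p=\gamma^p\int_0^\infty p u^{p-1}\textsf{P}(\xi>\gamma u)\,du$. I would then break this integral into the pieces $[0,u_*]$ and $[u_*,\infty)$, using the crude bound $\textsf{P}(\xi>\gamma u)\le 1$ on the first piece to get a contribution of $\gamma^p u_*^p$.

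For the tail piece $[u_*,\infty)$ I would insert the hypothesis $\textsf{P}(\xi>\gamma u)\le c\exp(-pu^\alpha/4)$, extend the integration back to $[0,\infty)$ (the integrand is nonnegative), and substitute $v=u^\alpha$ to reduce to a Gamma integral:
\begin{equation*}
\int_0^\infty p u^{p-1}\exp(-pu^\alpha/4)\,du=\frac{p}{\alpha}\Bigl(\tfrac{4}{p}\Bigr)^{p/\alpha}\Gamma(p/\alpha).
\end{equation*}
The key step is then to show that this quantity is bounded by $C_\alpha^{\,p}$ for a constant depending only on $\alpha$. This follows from Stirling's bound $\Gamma(p/\alpha)\le \bigl(p/\alpha\bigr)^{p/\alpha}e^{-p/\alpha}\sqrt{2\pi p/\alpha}$, after which the factor $(4/p)^{p/\alpha}(p/\alpha)^{p/\alpha}=(4/\alpha)^{p/\alpha}$ combines with $e^{-p/\alpha}$ to give a pure $p$-th power $((4/\alpha)^{1/\alpha}e^{-1/\alpha})^p$; the polynomial prefactor $(p/\alpha)^{1/2}\cdot p/\alpha$ is absorbed into a (slightly enlarged) constant $\tilde c_\alpha^{\,p}$ since it grows subexponentially in $p$.

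Putting the two pieces together yields
\begin{equation*}
\textsf{E}\xi^p\le \gamma^p\bigl(u_*^p+c\,\tilde c_\alpha^{\,p}\bigr),
\end{equation*}
and taking $p$-th roots, together with the elementary inequality $(a^p+b^p)^{1/p}\le a+b$ for $a,b\ge 0$ (i.e.\ the $\ell^p$/$\ell^1$ comparison), gives $(\textsf{E}\xi^p)^{1/p}\le\gamma(u_*+c^{1/p}\tilde c_\alpha)$. Since $c\ge 1$ we have $c^{1/p}\le c$, yielding the stated bound $(\textsf{E}\xi^p)^{1/p}\le\gamma(\tilde c_\alpha c+u_*)$.

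The only genuine obstacle is the Stirling estimate giving a bound of the form $C_\alpha^{\,p}$ with a constant depending solely on $\alpha$; everything else is a routine application of the layer-cake formula, a monotone change of variables, and the reverse triangle inequality for $\ell^p$ norms. A small technical care is needed when $p/\alpha<1$ so that $\Gamma(p/\alpha)$ may be large, but this is easily handled by the identity $\Gamma(x)=\Gamma(x+1)/x$ and then applying Stirling to $\Gamma(p/\alpha+1)$, which only changes $\tilde c_\alpha$.
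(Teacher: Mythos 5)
The paper does not prove this lemma; it is quoted directly from Dirksen \cite{SD}, so there is no internal proof to compare against. Your argument is correct and is essentially the standard proof from that reference: layer-cake formula, splitting at $u_*$, the change of variables $v=u^\alpha$ giving $\frac{p}{\alpha}\bigl(\tfrac{4}{p}\bigr)^{p/\alpha}\Gamma(p/\alpha)\le \tilde c_\alpha^{\,p}$ via Stirling (with the small-argument case handled through $\Gamma(x)=\Gamma(x+1)/x$), and the $\ell^p$-subadditivity step $(u_*^p+c\,\tilde c_\alpha^{\,p})^{1/p}\le u_*+c^{1/p}\tilde c_\alpha\le u_*+c\,\tilde c_\alpha$ using $c\ge 1$.
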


\begin{lemma}\label{moment}
	For $g\sim N(0,1)$, for any $\alpha\geq1$,
	$$\textsf{E}\left|g\right|^{\alpha}\leq\sqrt{\frac{e}{e-1}}\alpha^{\frac{\alpha}{2}}\quad.$$
\end{lemma}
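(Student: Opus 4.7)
The plan is to reduce the bound to a pointwise exponential estimate and then evaluate a single Gaussian integral; no chaining or moment machinery is needed.

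First, I would establish the deterministic inequality
$$x^{\alpha} \leq \alpha^{\alpha/2} \exp\!\left(\frac{x^{2}}{2e}\right), \qquad x \geq 0,\ \alpha \geq 1.$$
Taking logarithms and substituting $u = x/\sqrt{\alpha}$, this is equivalent to $\log u \leq u^{2}/(2e)$ for all $u>0$. The function $h(u) := u^{2} - 2e\log u$ has derivative $h'(u) = 2u - 2e/u$, so its only critical point on $(0,\infty)$ is $u = \sqrt{e}$, where $h(\sqrt{e}) = e - e = 0$. Hence $h \geq 0$ and the pointwise bound follows (with equality precisely at $x = \sqrt{\alpha e}$).

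Next, I would apply this pointwise inequality to $|g|$ and take expectations, which reduces the problem to a single moment-generating-type integral:
$$\textsf{E}|g|^{\alpha} \leq \alpha^{\alpha/2}\, \textsf{E}\exp\!\left(\frac{g^{2}}{2e}\right).$$

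Finally, since $1/(2e) < 1/2$, completing the square in the standard Gaussian density yields
$$\textsf{E}\exp\!\left(\frac{g^{2}}{2e}\right) = \frac{1}{\sqrt{2\pi}}\int_{-\infty}^{\infty} \exp\!\left(-\tfrac{1}{2}\!\left(1-\tfrac{1}{e}\right)x^{2}\right)\,dx = \left(1 - \frac{1}{e}\right)^{-1/2} = \sqrt{\frac{e}{e-1}},$$
and combining the last two displays gives the claim. There is no real obstacle here; the only design choice is the constant $1/(2e)$ inside the exponential, which is precisely the value that makes the pointwise estimate tight and produces the factor $\alpha^{\alpha/2}$ on the right.
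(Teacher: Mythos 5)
Your proposal is correct and is essentially the same argument as the paper's: the pointwise bound $(x/\sqrt{\alpha})^{\alpha}\leq\exp(x^{2}/(2e))$ followed by integration against the Gaussian density, yielding $(1-1/e)^{-1/2}=\sqrt{e/(e-1)}$. You simply supply the calculus verification of the pointwise inequality that the paper leaves as "easily checked."
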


\begin{proof}
	It is easily checked that for $x>0$,
	$$\left(\frac{x}{\sqrt{\alpha}}\right)^{\alpha}\leq\exp\left(\frac{x^2}{2e}\right).$$
	Then
	\begin{equation*}
		\begin{aligned}
			\frac{\textsf{E}\left|g\right|^{\alpha}}{\alpha^{\frac{\alpha}{2}}}=\frac{2}{\sqrt{2\pi}}\int_0^{\infty}\left(\frac{x}{\sqrt{\alpha}}\right)^{\alpha}\exp\left(-\frac{x^2}{2}\right)dx=\sqrt{\frac{e}{e-1}}.
		\end{aligned}
	\end{equation*}
	This completes the proof.
\end{proof}

\begin{proof}[Proof of Theorem \ref{dks}]\label{pf1}
	By triangle inequality, we have for any $t_0\in T$,
	\begin{equation}\label{t01}
		\begin{aligned}
			\displaystyle\left(\textsf{E}\sup_{t \in T}|X_t|^p\right)^{\frac{1}{p}} &\leq\left(\textsf{E}\sup_{t \in T}|X_t-X_{t_0}|^p\right)^{\frac{1}{p}}+\left(\textsf{E}|X_{t_0}|^p\right)^{\frac{1}{p}}  \\
			&\leq\left(\textsf{E} \sup_{t \in T}|X_t-X_{\pi_{k_p}(t)}|^p\right)^{\frac{1}{p}}+\left(\textsf{E} \sup_{t \in T}|X_{\pi_{k_p}(t)}-X_{t_0}|^p\right)^{\frac{1}{p}}+\left(\textsf{E}|X_{t_0}|^p\right)^{\frac{1}{p}} .
		\end{aligned}
	\end{equation}
	
	For the second term in (\ref{t01}),
	\begin{equation}\label{t02}
		\begin{aligned}
			\left(\textsf{E} \sup_{t \in T}|X_{\pi_{k_p}(t)}-X_{t_0}|^p\right)^{\frac{1}{p}}&\leq\left(\textsf{E} \sum_{t \in T}|X_{\pi_{k_p}(t)}-X_{t_0}|^p\right)^{\frac{1}{p}}\\
			&\leq\left(2^{2^{k_p}}\sup_{t\in T}\textsf{E}|X_{\pi_k(t)}-X_{t_0}|^p\right)^{\frac{1}{p}}\\
			&\leq2\sup_{t\in T}\left(\textsf{E}|X_{\pi_{k_p}(t)}-X_{t_0}|^p\right)^{\frac{1}{p}}\leq2\sup_{t\in T}\left(\textsf{E}|X_t-X_{t_0}|^p\right)^{\frac{1}{p}}.
		\end{aligned}
	\end{equation}
	
	The generic chaining method is needed to bound the first term in (\ref{t01}). Recall that the increment condition here is 
	$$\textsf{P}\left(\left|X_t-X_s\right|\geq ud(t,s)\right)\leq2\exp(-\varphi^*(u)).$$
	
	Hence, we have 
	\begin{equation}\label{lstart}
		\begin{aligned}
			\textsf{P}\left(\left|X_{\pi_n(t)}-X_{\pi_{n-1}(t)}\right|\geq\varphi^{*(-1)}(2^nu)d(\pi_n(t),d_{n-1}(t))\right)\\
			\leq2\exp\left(-\varphi^*(\varphi^{*(-1)}(2^nu))\right)=2\exp(-2^nu).
		\end{aligned}
	\end{equation}
	
	Here, $ \operatorname{card}\{(\pi_n(t),\pi_{n-1}(t));t\in T\}\leq  \operatorname{card}(T_n)\,\, \operatorname{card}(T_{n-1})\leq2^{2^n}2^{2^{n-1}}\leq2^{2^{n+1}}$. $\Omega_{u,p,n}$ denotes the following event:
	$$\left|X_{\pi_n(t)}-X_{\pi_{n-1}(t)}\right|\leq\varphi^{*(-1)}(2^nu)d(\pi_n(t),\pi_{n-1}(t)) \quad\text{for all} \quad t\in T.$$
	
	By the choice of $k$, we have $2^{k_p}\leq p<2^{{k_p}+1}$. Then we know that for $u\geq2$,
	\begin{equation*}
		\begin{aligned}
			\textsf{P}\left((\bigcap_{n>k_p}\Omega_{u,p,n})^{c}\right)&\leq2\sum_{n>k_p}2^{2^n+1}\exp\left(-2^nu\right)=2\sum_{n>k_p}\exp\left(2(\log2)2^n\right)\exp(-2^nu)\\
			&\leq 2\sum_{n>k_p}\exp\left((\log2-1)u2^n\right)=2\exp\left(-\frac{2^{k_p}u}{2}\right)\sum_{n>k_p}\exp\left((\log2-1)u2^n+\frac{2^{k_p}u}{2}\right)\\
			&\leq2\exp\left(-\frac{2^{k_p}u}{2}\right)\sum_{n\geq k_p}\exp\left((\log2-1)u2^n+\frac{2^nu}{4}\right)\\
			&\leq c\exp\left(-\frac{pu}{4}\right).
		\end{aligned}
	\end{equation*}
	Here, 
	 $$ c \leq2\sum_{n\geq k_p}\exp\left((\log2-1)u2^n+\frac{2^nu}{4}\right) \leq2\sum_{n\geq 0}\exp\left(2(\log2-3/4)n\right)<\infty. $$

	If event $\displaystyle\bigcap_{n>k_p}\Omega_{u,p,n}$ occurs, then for $u\geq2$, by Lemma \ref{inv},
	\begin{equation*}
		\begin{aligned}
			\left|\sum_{n>k_p}X_{\pi_n(t)}-X_{\pi_{n-1}(t)}\right|&\leq\sum_{n>k}\left|X_{\pi_n(t)}-X_{\pi_{n-1}(t)}\right|\\
			&\leq \sum_{n>k_p}\varphi^{*(-1)}(2^nu)d(\pi_n(t),\pi_{n-1}(t))\\
			&\leq u\sum_{n>k_p}\varphi^{*(-1)}(2^n)d(\pi_n(t),t)+u\sum_{n>k_p}\varphi^{*(-1)}(2^n)d(t,\pi_{n-1}(t))\\
			&\leq u\sum_{n>k_p}\varphi^{*(-1)}(2^n)d(\pi_n(t),t)+2u\sum_{n\geq k_p}\varphi^{*(-1)}(2^n)d(t,\pi_{n}(t))\\
			&\leq3u\tilde{\gamma}_{\varphi,p}(T,d).
		\end{aligned}
	\end{equation*}
	Taking the supremum on both sides, then we get for $u \geq 2$,
	$$\sup_{t \in T}|X_t-X_{\pi_{k_p}(t)}|\leq 3u\tilde{\gamma}_{p}(T, d).$$
	
	To conclude, 
	\begin{equation}\label{lend}
		\begin{aligned}
			\textsf{P}\left(\sup_{t\in T}\left|X_t-X_{\pi_{k_p}(t)}\right|\geq3u\tilde{\gamma}_{\varphi,p}(T,d)\right)\leq c\exp\left(-\frac{pu}{4}\right)\qquad(u\geq2).
		\end{aligned}
	\end{equation}
	
	Therefore, by Lemma \ref{moment},
	\begin{equation*}
		\begin{aligned}
			\textsf{E}\sup_{t\in T}\left|X_t-X_{\pi_{k_p}(t)}\right|^p&=\int_{0}^{\infty}pu^{p-1}\textsf{P}\left(\sup_{t\in T}\left|X_t-X_{\pi_{k_p}(t)}\right|>u\right)du\\
			&=\int_0^{\infty}p(3\tilde{\gamma}_{\varphi,p}(T,d))^p v^{p-1}\textsf{P}\left(\sup_{t\in T}\left|X_t-X_{\pi_{k_p}(t)}\right|>3v\tilde{\gamma}_{\varphi,p}(T,d)\right)dv\\
			&=(3\tilde{\gamma}_{\varphi,p}(T,d))^p\left(\int_0^2+\int_2^{\infty}\right)pv^{p-1}\textsf{P}\left(\sup_{t\in T}\left|X_t-X_{\pi_{k_p}(t)}\right|>3v\tilde{\gamma}_{\varphi,p}(T,d)\right)dv\\
			&\leq(3\tilde{\gamma}_{\varphi,p}(T,d))^p\left(2^p+cp\int_2^{\infty}v^{p-1}\exp\left(-\frac{pv}{4}\right)dv\right)\\
			&\leq(6\tilde{\gamma}_{\varphi,p}(T,d))^p+cp(3\tilde{\gamma}_{\varphi,p}(T,d))^p\int_0^{\infty}v^{p-1}\exp\left(-\frac{pv}{4}\right)dv\\
			&=(6\tilde{\gamma}_{\varphi,p}(T,d))^p+2^{1+p}p^{1-p}c(3\tilde{\gamma}_{\varphi,p}(T,d))^p\int_0^{\infty}u^{2p-1}\exp\left(-\frac{u^2}{2}\right)du\\
			&=(6\tilde{\gamma}_{\varphi,p}(T,d))^p+2^{1+p}p^{1-p}c(3\tilde{\gamma}_{\varphi,p}(T,d))^p\frac{\sqrt{2\pi}}{2}\textsf{E}|g|^{2p-1}\\
			&\leq(6\tilde{\gamma}_{\varphi,p}(T,d))^p+2^{p}p^{1-p}(3\tilde{\gamma}_{\varphi,p}(T,d))^p\frac{c\sqrt{2\pi e}}{\sqrt{e-1}}(2p-1)^{p-\frac{1}{2}}\\
			&\leq(6\tilde{\gamma}_{\varphi,p}(T,d))^p+2^{2p}(3\tilde{\gamma}_{\varphi,p}(T,d))^p\frac{c\sqrt{\pi e}}{\sqrt{e-1}}p^{\frac{1}{2}}.
		\end{aligned}
	\end{equation*}
	Here, $g$ is a standard Gaussian random variable.
	
	Noting that $p^{1/2p}\leq e^{1/2e}$, we obtain that 
	\begin{equation}\label{t03}
		\begin{aligned}
			\left(\textsf{E}\sup_{t\in T}\left|X_t-X_{\pi_{k_p}(t)}\right|^p\right)^{\frac{1}{p}}&\leq\left((3\tilde{\gamma}_{\varphi,p}(T,d))^p(2^p+2^{2p}\frac{c\sqrt{\pi e}}{\sqrt{e-1}}p^{\frac{1}{2}})\right)^{\frac{1}{p}}\\
			&\leq3\tilde{\gamma}_{\varphi,p}(T,d)\left(2+4\left(\frac{c\sqrt{\pi e}}{\sqrt{e-1}}\right)^{\frac{1}{p}}p^{\frac{1}{2p}}\right)\\
			&\leq C_1\tilde{\gamma}_{\varphi,p}(T,d),
		\end{aligned}
	\end{equation}
	for some universal constant $C_1>0$.
	
	By a standardization of the Orlicz $N$-function $\varphi$ in a neighborhood of zero (see \cite{BK} P.67 for details), we might assume $\varphi(x)=\frac{x^2}{2}$ for $|x|<\sqrt{2}$.
	
	By Lemma \ref{book}, $\frac{\varphi^*(u)}{u}$ is monotonically nondecreasing for $u>0$, 
	$$\varphi^*(u)\geq\varphi^*(1)u=\frac{u}{2} \quad\text{for}\quad u>1.$$
	
	By the increment condition, we get
	\begin{equation}\label{appsta}
		\begin{aligned}
			\textsf{P}\left(\left|X_t-X_{t_0}\right|\geq u\Delta(T)\right)&\leq\textsf{P}\left(\left|X_t-X_s\right|\geq ud(t,s)\right)\\
			&\leq2\exp\left(-\varphi^*(u)\right)=\begin{cases}2\exp\left(-\frac{u^2}{2}\right) \quad0\leq u<1\\
				2\exp\left(-\frac{u}{2}\right)\quad u\geq1\\
			\end{cases}.
		\end{aligned}
	\end{equation}
	
	Then we have
	\begin{equation*}
		\begin{aligned}
			\textsf{E}\left|X_t-X_{t_0}\right|^p=&\int_0^{\infty}pu^{p-1}\textsf{P}\left(\left|X_t-X_{t_0}\right|>u\right)du\\
			&=\Delta^p(T)p\left(\int_0^1+\int_1^{\infty}\right)u^{p-1}\textsf{P}\left(\left|X_t-X_{t_0}\right|>u\Delta(T)\right)du\\
			&\leq\Delta^p(T)p\left(2\int_0^1u^{p-1}\exp\left(-\frac{u^2}{2}\right)du+2\int_1^{\infty}u^{p-1}\exp\left(-\frac{u}{2}\right)du\right)\\
			&\leq\Delta^p(T)p\left(2^{\frac{p}{2}}\int_0^1v^{\frac{p}{2}-1}\exp(-v)dv+2^{p+1}\int_{1/2}^{\infty}v^{p-1}\exp(-v)dv\right)\\
			&\leq\Delta^p(T)\left(2^{\frac{p}{2}}p\Gamma\left(\frac{p}{2}\right)+2^{p+1}p\Gamma(p)\right),
		\end{aligned}
	\end{equation*}
	where $\Gamma\left(\cdot\right)=\int_0^{\infty}v^{\cdot-1}\exp(-v)dv$ is the gamma function.
	
	By Stirling's formula, we know for some $0\leq\theta_1(p),\theta_2(p)\leq1$,
	$$2^{\frac{p}{2}}p\Gamma\left(\frac{p}{2}\right)=2\sqrt{\pi}e^{\frac{\theta_1(p/2)}{6p}}p^{\frac{p+1}{2}}\exp\left(-\frac{p}{2}\right),$$
	$$2^{p+1}p\Gamma\left(p\right)=\sqrt{\pi}e^{\frac{\theta_2(p)}{12p}}2^{p+\frac{3}{2}}p^{p+\frac{1}{2}}\exp\left(-p\right).$$
	
	So
	\begin{equation}\label{appsr}
		\begin{aligned}
			\left(\textsf{E}\left|X_t-X_{t_0}\right|^p\right)^{\frac{1}{p}}&\leq\Delta(T)\left(2\sqrt{\pi}e^{\frac{\theta_1(p/2)}{6p}}p^{\frac{p+1}{2}}\exp\left(-\frac{p}{2}\right)+\sqrt{\pi}e^{\frac{\theta_2(p)}{12p}}2^{p+\frac{3}{2}}p^{p+\frac{1}{2}}\exp\left(-p\right)\right)^{1/p}\\
			&\leq\Delta(T)\left(\left(2\sqrt{\pi}e^{\frac{1}{6p}}\right)^{\frac{1}{p}}e^\frac{1-e}{2e}p^{\frac{1}{2}}+2\left(\sqrt{8\pi}e^{\frac{1}{12p}}\right)^{\frac{1}{p}}e^\frac{1-2e}{2e}p\right).\\
		\end{aligned}
	\end{equation}
	
	The same procedure from (\ref{appsta}) to (\ref{appsr}) also applied to $\left(\textsf{E}|X_{t_0}|^p\right)^{\frac{1}{p}}$, so we can also get
	\begin{equation}\label{appsrt}
		\begin{aligned}
			\left(\textsf{E}|X_{t_0}|^p\right)^{\frac{1}{p}}&\leq\Delta(T)\left(2\sqrt{\pi}e^{\frac{\theta_1(p/2)}{6p}}p^{\frac{p+1}{2}}\exp\left(-\frac{p}{2}\right)+\sqrt{\pi}e^{\frac{\theta_2(p)}{12p}}2^{p+\frac{3}{2}}p^{p+\frac{1}{2}}\exp\left(-p\right)\right)^{1/p}\\
			&\leq\Delta(T)\left(\left(2\sqrt{\pi}e^{\frac{1}{6p}}\right)^{\frac{1}{p}}e^\frac{1-e}{2e}p^{\frac{1}{2}}+2\left(\sqrt{8\pi}e^{\frac{1}{12p}}\right)^{\frac{1}{p}}e^\frac{1-2e}{2e}p\right).\\
		\end{aligned}
	\end{equation}
	
	Combing (\ref{t01}), (\ref{t02}), (\ref{t03}) and (\ref{appsrt}), we obtain the moment bound
	\begin{equation}\label{t04}
		\begin{aligned}
			\left(\textsf{E}\sup_{t \in T}|X_t-X_{t_0}|^p\right)^{\frac{1}{p}}\leq C_1\tilde{\gamma}_{\varphi,p}(T,d)+C_2\sqrt{p}\Delta(T)+C_3p\Delta(T).
		\end{aligned}
	\end{equation}
	for some universal constants $C_1,C_2,C_3>0$

	Now using Lemma \ref{DA1}, 
	\begin{equation}\label{t05}
		\begin{aligned}
			\textsf{P}\left(\sup_{t\in T}\left|X_t\right|\geq e\left(c_1\varphi^*(u)+c_2\sqrt{{\varphi^*(u)}}+c_3\right)\right)\leq\exp\left(-\varphi^*(u)\right) \quad  (u\geq\sqrt{2}),
		\end{aligned}
	\end{equation}
	where 
	$$c_1=12e^{\frac{1-2e}{2e}}\Delta(T)\left(\sqrt{8\pi}e^{\frac{1}{12p}}\right)^{\frac{1}{p}},$$
	$$c_2=6e^{\frac{1-e}{2e}}\Delta(T)\left(\sqrt{2\pi}e^{\frac{1}{6p}}\right)^{\frac{1}{p}},$$
	$$c_3=3\left(2+4e^{\frac{1}{2e}}\left(c\sqrt{\frac{2\pi e}{e-1}}\right)^{\frac{1}{p}}\right)\tilde{\gamma}_{\varphi,p}(T,d).$$
	
	To put (\ref{t05}) in another way, for some positive universal constant $C_1, C_2, C_3$,
	\begin{equation}\label{t06}
		\begin{aligned}
			\textsf{P}\left(\sup_{t\in T}\left|X_t\right|\geq C_1\Delta(T)\varphi^*(u)+C_2\Delta(T)\sqrt{\varphi^*(u)}+C_3\tilde{\gamma}_{\varphi,p}(T,d)\right)\leq\exp(-\varphi^*(u)) \quad (u\geq\sqrt{2}).
		\end{aligned}
	\end{equation}
	
	This completes the proof of Theorem \ref{dks}.
\end{proof}

\begin{proof}[Proof of Corollary \ref{signalrlemma}]
	See (\ref{appsrt}) in the proof of Theorem \ref{dks}.
\end{proof}

\section{ Examples and Applications}\label{secE}
This section illustrates a range of applications based on our preceding results.

\subsection{Order 2 Gaussian chaos}
Consider independent standard Gaussian sequences $(g_i)_{i\geq1}$, $(g'_j)_{j\geq1}$ and a given  double sequence sequence $t=(t_{i,j})_{i,j\geq1}$.
\begin{equation}
	X_t=\sum_{i,j\geq1}t_{i,j}g_ig_j
\end{equation}
is defined as a (non-decoupled) order 2 Gaussian chaos.

To lighten the notation, we denote by $tg$ the sequence $\left(\sum_{j\geq1}t_{i,j}g_j\right)_{i\geq1}$, by $\langle\cdot, \cdot\rangle$ the dot product in $\ell^2$, and by $||\cdot||_2$ the corresponding norm. For $t=(t_{i,j})_{i,j\geq 1}$, $||t||_{HS}:=\sum_{i,j\geq1}t_{i,j}^2$ denotes the Hilbert-Schmidt norm and  write
\begin{eqnarray*}
||t||&=&\sup_{ \sum_{j\ge 1}\alpha_{j}^{2}\le 1}(\sum_{ i\ge 1}(\sum_{j\ge 1}\alpha_{j}t_{i,j})^2)^{1/2}\\
     &=& \sup\{\sum_{i\ge 1, j\ge 1}\alpha_j\beta_it_{i,j}: \sum_{j\ge 1}\alpha_{j}^{2}\le 1,  \sum_{i\ge 1}\beta_{i}^{2}\le 1 \}.
\end{eqnarray*}
We define $d_{\infty}(s,t)=||t-s||, $ and 

\begin{equation}
	Y_t^*:=\sum_{i \geq 1}\left(\sum_{j \geq 1} t_{i, j} g_j\right)^2=||t g||_2^2=\langle t g, t g\rangle=\sum_{i \geq 1} \sum_{j, k \geq 1} t_{i, j} t_{i, k} g_j g_k,
\end{equation}

\begin{equation}
	Y_t:=Y_t^*-\textsf{E} Y_t^*=\sum_{i \geq 1} \sum_{j \neq k} t_{i, j} t_{i, k} g_j g_k+\sum_{i \geq 1} \sum_{j \geq 1} t_{i, j}^2\left(g_j^2-1\right),
\end{equation}
which is a Gaussian chaos of order 2 .

\begin{theorem}\label{pchaos}
	For any set $T$ with $0 \in T$, we have, for $p\geq1$,
	\begin{equation}
		\left(\textsf{E} \sup _{t \in T}\left|Y_t\right|^p\right)^{1/p} \leq L \gamma_{2,p}\left(T, d_{\infty}\right)\left(\gamma_{2,p}\left(T, d_{\infty}\right)+\sup _{t \in T}||t||_{H S}\right).
	\end{equation}
\end{theorem}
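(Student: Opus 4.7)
The plan is to adapt Talagrand's argument for suprema of order 2 Gaussian chaos to the $p$-th moment setting, combining the Hanson-Wright inequality with the $\gamma_{2,p}$ chaining developed in Section \ref{secT} and the tail-to-moment conversion used in the proof of Theorem \ref{dks}. Write $S := \sup_{t\in T}\|t\|_{HS}$ and note that $\|t\|_{op}\leq\|t\|_{HS}\leq S$ for every $t\in T$, since $0\in T$ and in particular $Y_0=0$. The increment identity $Y_t - Y_s = g^{T}(t^{T}t-s^{T}s)g - \mathsf{E}[\cdot]$ together with the factorisations $t^{T}t - s^{T}s = \bar{s}^{T}(t-s) + (t-s)^{T}\bar{s}$ (with $\bar{s}:=(s+t)/2$) and $t^{T}t - s^{T}s = (t-s)^{T}(t-s) + (t-s)^{T}s + s^{T}(t-s)$ yields
\begin{equation*}
\|t^{T}t-s^{T}s\|_{HS}\leq d_\infty(s,t)(\|s\|_{HS}+\|t\|_{HS}),\quad \|t^{T}t-s^{T}s\|_{op}\leq d_\infty(s,t)^{2}+2\,d_\infty(s,t)\|s\|_{op},
\end{equation*}
so the Hanson-Wright inequality gives a mixed sub-Gaussian/sub-exponential tail for $|Y_t-Y_s|$.

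Next, I fix a nearly optimal admissible sequence $(T_n)_{n\geq k_p}$ for $\tilde\gamma_{2,p}(T,d_\infty)$ (via Theorem \ref{eq1}) with $\pi_0=0$, telescope $Y_t = Y_{\pi_{k_p}(t)} + \sum_{n>k_p}(Y_{\pi_n(t)} - Y_{\pi_{n-1}(t)})$, and union-bound each chaining level $n$ over the $\leq 2^{2^{n+1}}$ pairs at the mixed threshold
\begin{equation*}
u_n = c\sqrt{u}\, 2^{n/2}\, S\, d_n + cu\, 2^{n}\bigl(d_n^{2} + 2 d_n\|\pi_{n-1}(t)\|_{op}\bigr),\qquad d_n := d_\infty(\pi_n(t),\pi_{n-1}(t)),
\end{equation*}
so that summing the failure probabilities over $n>k_p$ gives aggregate failure $\leq c\exp(-pu/4)$ for $u\geq 2$, exactly as in the proof of Theorem \ref{dks}. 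On the good event the sub-Gaussian part telescopes to $L\sqrt{u}\,S\,\tilde\gamma_{2,p}(T,d_\infty)$, the purely quadratic sub-exponential piece $u\sum_{n>k_p} 2^{n} d_n^{2}$ collapses to $Lu\,\tilde\gamma_{2,p}(T,d_\infty)^{2}$ via the elementary inequality $\sum_n a_n^{2}\leq(\sum_n a_n)^{2}$ applied with $a_n = 2^{n/2}d_n$, and the cross piece $u\sum 2^{n} d_n\|\pi_{n-1}(t)\|_{op}$ is handled by a chain-adaptive control of $\|\pi_{n-1}(t)\|_{op}$ together with the Cauchy-Schwarz variant $\sum_n 2^{n} a_n b_n \leq (\sum_n 2^{n/2} a_n)(\sum_n 2^{n/2} b_n)$, producing $Lu\,\tilde\gamma_{2,p}(T,d_\infty)(\tilde\gamma_{2,p}(T,d_\infty)+S)$. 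The $\leq 2^{p}$ base points in $T_{k_p}$ contribute $\lesssim p\, S^{2}$ via a direct Hanson-Wright union bound on $|Y_s| = |Y_s - Y_0|$, absorbable into the main term since $\sqrt{p}\,S\lesssim \gamma_{2,p}(T,d_\infty)+S$. Integrating the tail bound as in the proof of Theorem \ref{dks} and invoking $\tilde\gamma_{2,p}\asymp\gamma_{2,p}$ (Theorem \ref{eq1}) yields the stated moment bound.

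The principal obstacle is the treatment of the sub-exponential cross piece: avoiding a standalone $\gamma_{1,p}(T,d_\infty^{2})$-type contribution requires that the multiplier $\|\pi_{n-1}(t)\|_{op}$ be controlled adaptively along the chain rather than by the uniform bound $\|\pi_{n-1}(t)\|_{op}\leq S$, which would leave the divergent $\sum 2^{n} d_n$. The decisive tools are the non-negativity inequality $\sum a_n^{2}\leq (\sum a_n)^{2}$, its Cauchy-Schwarz variant above, and the growth condition of Proposition \ref{growth} certifying the geometric decay of the optimal chain; combining them delicately forces the sub-exponential contribution to collapse into the advertised product $\gamma_{2,p}(T,d_\infty)(\gamma_{2,p}(T,d_\infty)+S)$ rather than exceeding it.
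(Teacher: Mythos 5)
Your proposal does not follow the paper's route, and the divergence is consequential. The paper first decouples $Y_t$ to $Z_t=\langle tg,\,tg'\rangle$ (Proposition~\ref{decouplingl}), then \emph{conditions on $g'$}: conditionally, each chaining increment $Z_{\pi_n(t)}-Z_{\pi_{n-1}(t)}$ is a genuine centered Gaussian with standard deviation at most $\Delta(A_{n-1}(t))\cdot W'$ where $W'=\sup_t\|tg'\|_2$, so the chain produces a purely sub-Gaussian tail for $R=\sup_t|Z_t|/(W+W')$ with scale $\gamma_{2,p}(T,d_\infty)$. The product structure of the final bound then emerges only at the last step, from $(\mathsf{E}\,R^{2p})^{1/2p}(\mathsf{E}(W+W')^{2p})^{1/2p}$ together with Proposition~\ref{prop2}. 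No sub-exponential or cross term ever appears inside the chain.

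Your approach instead chains $Y_t$ directly with Hanson--Wright increments, and the resulting cross term is where the argument breaks down. After your union bound, the third piece on the good event is $u\sum_{n>k_p}2^{n}d_n\|\pi_{n-1}(t)\|_{op}$. The inequality you propose, $\sum_n 2^{n}a_nb_n\le(\sum_n 2^{n/2}a_n)(\sum_n 2^{n/2}b_n)$, applied with $b_n=\|\pi_{n-1}(t)\|_{op}$ requires the factor $\sum_n 2^{n/2}\|\pi_{n-1}(t)\|_{op}$ to be finite; but $\pi_{n-1}(t)\to t$, so $\|\pi_{n-1}(t)\|_{op}$ does \emph{not} decay in $n$, and that series diverges. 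Telescoping $\|\pi_{n-1}(t)\|_{op}\le\sum_{k<n}d_\infty(\pi_k(t),\pi_{k-1}(t))$ does not rescue it: the double sum $\sum_{k<n}2^{n}d_nd_k$ still carries an unbalanced $2^{n}$ weight and reduces only to a $\gamma_1(T,d_\infty)$-type quantity, not to $\gamma_{2,p}(T,d_\infty)\bigl(\gamma_{2,p}(T,d_\infty)+S\bigr)$. The appeal to Proposition~\ref{growth} does not fill the gap either: the growth condition controls the partition diameters $\Delta(A_n(t))$, not the operator norms of the chain points $\pi_{n-1}(t)$. As written, the ``chain-adaptive control'' step is the entire content of the hard part of the theorem and is asserted rather than proved; the known way to obtain the advertised bound is exactly the decoupling-plus-conditioning device the paper uses, which sidesteps the cross term altogether.
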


\begin{remark}
	Gaussian random variables are $\varphi$-sub-Gaussian with $\varphi(x)=x^2/2$. Throught this section, we write $\gamma_{2,p}(T,d)$ for $\gamma_{\varphi,p}$ when $\varphi(x)=x^2/2$.
\end{remark}

\begin{remark}
When $p=1$, Our result is  same as Krahmer, Mendelson and Rauhut \cite{KMR}.\end{remark}

The primary step of the proof of Theorem \ref{pchaos} consists of the following propositions.

\begin{proposition}\label{decouplingl}
	Consider independent standard Gaussian sequences $(g_i)_{i\geq1}$, $(g'_j)_{j\geq1}$ and a given collection $\mathcal{B}$ of double sequence $b=(b_{i,j})_{i,j\geq1}$. We have, for $p\geq1$,
	\begin{equation}\label{decoupling}
		\textsf{E}\sup_{b\in \mathcal{B}}|\sum_{i\neq j}b_{i,j}g_ig_j+\sum_{i\geq1}b_{i,i}(g^2_i-1)|^p\leq2^p\textsf{E}\sup_{b\in \mathcal{B}}|\sum_{i,j\geq1}b_{i,j}g_ig'_j|^p.
	\end{equation}
\end{proposition}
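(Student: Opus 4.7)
The plan is to derive the decoupling inequality via a symmetrization argument using an independent Gaussian copy, followed by an orthogonal change of variables that turns the difference of the coupled chaoses into a sum of two fully decoupled bilinear forms; the constant $2^p$ will then emerge from a single application of convexity, with no need for random-selector arguments.

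Writing $Y_b(g) := \sum_{i\ne j} b_{ij} g_i g_j + \sum_i b_{ii}(g_i^2 - 1)$, I first observe that $Y_b$ is centered in $g$, i.e.\ $\textsf{E}_{g'}Y_b(g') = 0$. Hence $Y_b(g) = \textsf{E}_{g'}[Y_b(g) - Y_b(g')]$ pointwise in $g$, and Jensen's inequality applied both to the absolute value and to $x \mapsto x^p$ (together with pushing the supremum past the expectation in $g'$) yields
\[
\textsf{E}\sup_{b\in\mathcal{B}}|Y_b(g)|^p \;\le\; \textsf{E}_{g,g'}\sup_{b\in\mathcal{B}}|Y_b(g) - Y_b(g')|^p.
\]
A key simplification is that the deterministic shift $-1$ in the diagonal term cancels in the difference, so that $Y_b(g) - Y_b(g') = \sum_{i,j\ge 1} b_{ij}(g_i g_j - g'_i g'_j)$, now an unrestricted sum over all pairs $(i,j)$.

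Next, I introduce the orthogonal substitution $u_i := (g_i+g'_i)/\sqrt{2}$ and $v_i := (g_i - g'_i)/\sqrt{2}$, under which $(u_i)_{i\ge 1}$ and $(v_j)_{j\ge 1}$ become two independent standard Gaussian sequences whose joint law coincides with that of $(g, g')$. Since $g_i g_j - g'_i g'_j = u_i v_j + v_i u_j$ for all $i,j$ (as a direct expansion shows, including on the diagonal where it reduces to $g_i^2 - g'^2_i = 2 u_i v_i$), the difference factorizes as
\[
Y_b(g) - Y_b(g') \;=\; \sum_{i,j\ge 1} b_{ij} u_i v_j \;+\; \sum_{i,j\ge 1} b_{ij} v_i u_j.
\]
Applying the triangle inequality inside the supremum together with the convexity bound $(a+b)^p \le 2^{p-1}(a^p + b^p)$, and then using $(u,v) \stackrel{d}{=} (v,u) \stackrel{d}{=} (g,g')$ to identify each of the two resulting expectations with $\textsf{E}\sup_{b\in\mathcal{B}}\bigl|\sum_{i,j} b_{ij} g_i g'_j\bigr|^p$, produces the stated bound with constant $2^{p-1}\cdot 2 = 2^p$.

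The argument is conceptually short once the Gaussian symmetrization is in place; the main delicate point is verifying the algebraic identity $g_i g_j - g'_i g'_j = u_i v_j + v_i u_j$ for all $i,j$, which guarantees that no correction terms survive on the diagonal and that the two resulting bilinear forms really do range over the full index set. This identity is what keeps the constant sharp and allows the entire decoupling to be executed without invoking selector or Bernoulli-randomization techniques.
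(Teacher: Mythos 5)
Your argument is correct and follows essentially the same route as the paper: both proofs express $Y_b(g)$ as a conditional expectation over an independent copy $g'$, apply Jensen's inequality to pull the expectation and supremum apart, and then exploit the orthogonal rotation $(u,v)=((g+g')/\sqrt{2},(g-g')/\sqrt{2})$ to produce independent standard Gaussian sequences. The only cosmetic difference is that the paper writes $Y_b(g)=\textsf{E}_{g'}\sum_{i,j}b_{ij}(g_i+g'_i)(g_j-g'_j)=\textsf{E}_{g'}\,2\sum_{i,j}b_{ij}u_iv_j$ and obtains the factor $2^p$ immediately by pulling the scalar $2$ out, whereas you use the difference representation $Y_b(g)-Y_b(g')=\sum_{i,j}b_{ij}(u_iv_j+v_iu_j)$ and recover the same constant via the triangle inequality and the bound $(a+b)^p\le 2^{p-1}(a^p+b^p)$; both are valid and yield identical conclusions.
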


\begin{proof}
	For $b=(b_{i,j})_{i,j\geq1}\in\mathcal{B}$ and $p\geq1$, we have
	\begin{align*}
		\textsf{E}\sup_{b\in \mathcal{B}}|\sum_{i\neq j}b_{i,j}g_ig_j+\sum_{i\geq1}b_{i,i}(g^2_i-1)|^p&=\textsf{E}_g\sup_{b\in \mathcal{B}}|\textsf{E}_{g'}\sum_{i,j}b_{i,j}(g_i+g'_i)(g_j-g'_j)|^p\\
		&\leq\textsf{E}_g\textsf{E}_{g'}\sup_{b\in\mathcal{B}}|\sum_{i,j}b_{i,j}(g_i+g'_i)(g_j-g'_j)|^p\\
		&=\textsf{E}\sup_{b\in\mathcal{B}}|\sum_{i,j}b_{i,j}(g_i+g'_i)(g_j-g'_j)|^p\\
		&=2^p\textsf{E}\sup_{b\in\mathcal{B}}\left|\sum_{i,j}b_{i,j}\frac{g_i+g'_i}{\sqrt{2}}\frac{g_j-g'_j}{\sqrt{2}}\right|^p\\
		&=2^p\textsf{E}\sup_{b\in\mathcal{B}}|\sum_{i,j\geq1}t_{i,j}g_ig'_j|.			
	\end{align*}
	Here, $\textsf{E}_g$ and $\textsf{E}_{g'}$ represents taking expectation in the random variables $g_i$ and $g'_i$, respectively. Jensen's inequality guarantees the inequality above. The last equality above holds since the families $(g_i+g'_i)/\sqrt{2}$ and $(g_i-g'_i)/\sqrt{2}$ are independent sequences of standard Gaussian random variables independent of each other.
\end{proof}

Define
\begin{equation*}
	Z_t=\sum_{i, j, k \geq 1} t_{i, j} t_{i, k} g_j g'_k=\left\langle t g, tg'\right\rangle.
\end{equation*}

\begin{proposition} \label{prop1}
	For $r\geq2$, let $U_r:=\textsf{E}\sup_{t \in T}||t g||_2^r$. Then, for $p\geq1$,
	\begin{equation}\label{prop1.0}
		\left(\textsf{E} \sup _{t \in T}\left|Z_t\right|^p\right)^{1/p} \leq L U_{2p}^{1/2p} \gamma_{2,p}\left(T, d_{\infty}\right) .
	\end{equation}
\end{proposition}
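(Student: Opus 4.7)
The plan is to condition on $g$ (respectively $g'$), view $(Z_t)_{t\in T}$ as a conditionally Gaussian process in the other sequence, apply a generic chaining argument in the spirit of Theorem \ref{dks}, and then integrate over the conditioned variable using H\"older's inequality to produce the factor $U_{2p}^{1/(2p)}$.

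The key algebraic observation is the identity
\begin{equation*}
Z_t - Z_s \;=\; \langle tg,\,(t-s)g'\rangle + \langle (t-s)g,\, sg'\rangle,
\end{equation*}
which cleanly isolates the displacement $t-s$ in each summand. With this split, the first term, conditional on $g$, is Gaussian in $g'$ with variance at most $\|t-s\|^{2}\,\|tg\|_2^{2}\le d_\infty(s,t)^{2} M_g^{2}$ by the operator-norm bound, where $M_g:=\sup_{r\in T}\|rg\|_2$; symmetrically, the second term, conditional on $g'$, is Gaussian in $g$ with variance at most $d_\infty(s,t)^{2} M_{g'}^{2}$. I stress this decomposition because other natural splittings leave a residual factor of the form $\|r\|\cdot\|(t-s)g\|_2$ that does not factorize into $d_\infty(s,t)$ times a quantity independent of $(s,t)$.

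With this in hand, I would anchor the admissible sequence at $0$ up to level $k_p$ (setting $T_n=\{0\}$ for $n\le k_p$ and choosing the optimal refinement for $n>k_p$), which forces $\pi_{k_p}(t)=0$ and hence $Z_{\pi_{k_p}(t)}=0$. Then $Z_t = \sum_{n>k_p}\alpha_n(t) + \sum_{n>k_p}\beta_n(t)$, where $\alpha_n,\beta_n$ are the two pieces of the identity above evaluated at $(\pi_n(t),\pi_{n-1}(t))$. Conditional on $g$, a union bound over the at most $2^{2^{n+1}}$ pairs $(\pi_n,\pi_{n-1})$ combined with Gaussian tails yields, for $u\ge 2$,
\begin{equation*}
\mathsf{P}\!\left(\sup_{t\in T}\Big|\sum_{n>k_p}\alpha_n(t)\Big|>Cu\, M_g\,\tilde\gamma_{2,p}(T,d_\infty)\,\Big|\,g\right)\le c\exp(-pu/4),
\end{equation*}
mirroring the tail estimate in the proof of Theorem \ref{dks}. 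Integrating in $u$ produces $\bigl(\textsf{E}_{g'}[\sup_{t\in T}|\sum_{n>k_p}\alpha_n(t)|^p\,|\,g]\bigr)^{1/p}\lesssim M_g\,\tilde\gamma_{2,p}(T,d_\infty)$. Taking $\textsf{E}_g$ and applying H\"older's inequality $(\textsf{E} M_g^p)^{1/p}\le (\textsf{E} M_g^{2p})^{1/(2p)}=U_{2p}^{1/(2p)}$ gives the bound on the $\alpha$-sum; the $\beta$-sum is handled symmetrically by conditioning on $g'$. Finally, Theorem \ref{eq1} allows replacing $\tilde\gamma_{2,p}$ by $\gamma_{2,p}$ up to universal constants, yielding the claim.

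The main obstacle is replaying the proof of Theorem \ref{dks} inside a conditional expectation so that the random prefactor $M_g$ enters only as a single multiplicative scalar, not as a chain-level-dependent quantity; this is precisely what the algebraic identity above buys us. A secondary technical point is the choice of admissible sequence: anchoring at $0$ up to level $k_p$ eliminates the base contribution $Z_{\pi_{k_p}(t)}$ and thus prevents extraneous terms (such as $\sup_t\|t\|_{HS}$) from appearing on the right-hand side of the proposition.
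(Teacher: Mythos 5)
Your proposal is sound and takes a genuinely different route from the paper's proof. The paper processes the two pieces of the increment decomposition \emph{together}: conditional on $g'$, one piece of $Z_{\pi_n(t)}-Z_{\pi_{n-1}(t)}$ is Gaussian in $g$ with standard deviation at most $\Delta(A_{n-1}(t))\,W'$, where $W'=\sup_t\|tg'\|_2$, and symmetrically with $W=\sup_t\|tg\|_2$. This yields an unconditional tail bound for the normalized variable $R=\sup_t|Z_t|/(W+W')$, to which Dirksen's moment Lemma \ref{EJPA.5} is applied, and the factor $U_{2p}^{1/(2p)}$ enters at the very end via Cauchy--Schwarz on $R\cdot(W+W')$. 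You instead separate the two pieces, condition on $g$ (resp.\ $g'$) so that each piece becomes a conditionally Gaussian chain with a single scalar random prefactor $M_g$ (resp.\ $M_{g'}$), replay the tail-to-moment integration of Theorem \ref{dks} inside the conditional expectation, and then pull out $M_g$ via the tower property together with the Lyapunov inequality $(\textsf{E}\,M_g^p)^{1/p}\le U_{2p}^{1/(2p)}$. Both routes hinge on the same algebraic split of $Z_t-Z_s$; the essential difference is whether one normalizes by $W+W'$ before chaining (paper) or conditions first and integrates out afterwards (you). Your anchoring of the chain at level $k_p$, so that $\pi_{k_p}(t)=0$, is also the cleaner choice and makes the truncation $n\ge k_p$ appear directly, whereas the paper anchors at $\pi_0=0$, states the intermediate tail bound in terms of $\gamma_2$, and then writes the moment conclusion in terms of $\gamma_{2,p}$; your version tracks this $p$-dependence more transparently. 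The paper's route is shorter (one union bound, one lemma application); yours is more modular and mirrors the structure of the proof of Theorem \ref{dks} more faithfully.
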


\begin{proof}
	Without loss of generality, it might be assumed that $T$ is finite.
	Consider an admissible sequence $\left(\mathcal{A}_n\right)$ with
	$$
	\sup _{t \in T} \sum_{n \geq \lfloor\log p/\log 2\rfloor} 2^{n / 2} \Delta\left(A_n(t)\right) \leq 2 \gamma_{2,p}\left(T, d_{\infty}\right),
	$$
	where the diameter $\Delta$ is for the distance $d_{\infty}$. For $A \in \mathcal{A}_n$, consider an element $t_{A, n} \in A$, and define as usual a chaining by $\pi_n(t)=t_{A_n(t), n}$. Since $0 \in T$, without loss of generality, we may assume that $\pi_0(t)=0$. We observe that
	\begin{align}\label{Zdecom}
		Z_{\pi_n(t)}-Z_{\pi_{n-1}(t)}=\left\langle\left(\pi_n(t)\right.\right. & \left.\left.-\pi_{n-1}(t)\right) g, \pi_n(t) g^{\prime}\right\rangle \\ \nonumber
		& +\left\langle\pi_{n-1}(t) g,\left(\pi_n(t)-\pi_{n-1}(t)\right) g^{\prime}\right\rangle .
	\end{align}
	
	Recalling that we consider each $t$ as an operator on $\ell^2$, let us denote by $t^*$ its adjoint. Thus,
	\begin{equation}\label{congaussian}
		\left\langle\left(\pi_n(t)-\pi_{n-1}(t)\right) g, \pi_n(t) g^{\prime}\right\rangle=\left\langle g,\left(\pi_n(t)-\pi_{n-1}(t)\right)^* \pi_n(t) g^{\prime}\right\rangle .
	\end{equation}
	Here, $\left(\pi_n(t)-\pi_{n-1}(t)\right)^* \pi_n(t) g^{\prime}$ is the element of $\ell^2$ obtained by applying the operator $\left(\pi_n(t)-\pi_{n-1}(t)\right)^*$ to the vector $\pi_n(t) g^{\prime}$. Let us now consider the r.v.s $W=\sup _{t \in T}||t g||_2$ and $W^{\prime}=\sup _{t \in T}\left|\left|t g'\right|\right|_2$. Then
	$$
	\begin{aligned}
		&\left|\left|\left(\pi_n(t)-\pi_{n-1}(t)\right)^* \pi_n(t) g^{\prime}\right|\right|_2 \leq\left|\left|\left(\pi_n(t)-\pi_{n-1}(t)\right)^*\right|\right|\left||\pi_n(t) g^{\prime}\right||_2 \\
		& \leq \Delta\left(A_{n-1}(t)\right) W^{\prime} .
	\end{aligned}
	$$
	It then follows from (\ref{congaussian}) that, conditionally on $g^{\prime}$, the quantity $\left\langle\left(\pi_n(t)-\pi_{n-1}(t)\right) g, \pi_n(t) g^{\prime}\right\rangle$ is a Gaussian r.v. $G$ with $\left(\textsf{E} G^2\right)^{1 / 2} \leq \Delta\left(A_{n-1}(t)\right) W^{\prime}$. Thus, we obtain that for $u \geq 1$,
	$$
	\textsf{P}\left(\left|\left\langle\left(\pi_n(t)-\pi_{n-1}(t)\right) g, \pi_n(t) g^{\prime}\right\rangle\right| \geq 2^{n / 2} u \Delta\left(A_{n-1}(t)\right) W^{\prime}\right) \leq \exp \left(-u^2 2^n / 2\right).
	$$
	Proceeding similarly for the second term in (\ref{Zdecom}), we get
	$$
	\textsf{P}\left(\left|Z_{\pi_n(t)}-Z_{\pi_{n-1}(t)}\right| \geq 2^{n / 2} u \Delta\left(A_{n-1}(t)\right)\left(W+W^{\prime}\right)\right) \leq 2 \exp \left(-u^2 2^n / 2\right) .
	$$
	
	Using that $Z_{\pi_0(t)}=0$, and proceeding just as in the proof of the generic chaining bound, we obtain that for $u \geq L$,
	\begin{equation}
		\textsf{P}\left(\sup _{t \in T}\left|Z_t\right| \geq Lu\gamma _{2}\left(T, d_{\infty}\right)\left(W+W^{\prime}\right)\right) \leq L \exp \left(-u^2\right).
	\end{equation}
	
	In particular, $R=\sup _{t \in T}\left|Z_t\right| /\left(W+W^{\prime}\right)$ satisfies 
	\begin{equation}
		\textsf{P}\left(R \geq Lu\gamma _{2}\left(T, d_{\infty}\right)\right) \leq L \exp \left(-u^2\right).
	\end{equation}
	As a consequence, according to Lemma \ref{EJPA.5}, we have, for any $q\geq 1$,
	\begin{equation}\label{RLp}
		\left(\textsf{E}{R^q}\right)^{1/q}\leq L\gamma_{2,p}(T,d_{\infty}).
	\end{equation}
	
	Notice that $\sup _{t \in T}\left|Z_t\right|=R\left(W+W^{\prime}\right)$ and $\textsf{E} W^{2p}=E W^{\prime 2p}=U_{2p}$, we know that, by Cauchy-Schwarz inequality and (\ref{RLp}), 
	\begin{align}
		\left(\textsf{E} \sup _{t \in T}\left|Z_t\right|^p\right)^{1/p}&=\left(\textsf{E}R^p(W+W')^p\right)^{1/p}\nonumber\\
		&\leq\left(\textsf{E} R^{2p}\right)^{1/2p}\left(\textsf{E}(W+W')^{2p}\right)^{1/2p}\nonumber\\
		&\leq L\gamma_{2,p}(T,d_{\infty})(2^{2p-1}(\textsf{E}W^{2p}+\textsf{E}W'^{2p}))^{1/2p}\nonumber\\
		&\leq L\gamma_{2,p}(T,d_{\infty})(\textsf{E}W^{2p})^{1/2p}=L\gamma_{2,p}(T,d_{\infty})(U_{2p})^{1/2p}.\nonumber\\
	\end{align}
	
	This finally yields Proposition \ref{prop1}.
\end{proof}

\begin{proposition} \label{prop2}
	We set $V=\sup _{t \in T}||t||_{H S}$. Then for $p\geq1$,
	\begin{equation}
		U_{2p}^{1/2p}\leq L\left(\gamma_{2,p}(T,d_{\infty})+V\right).
	\end{equation}
\end{proposition}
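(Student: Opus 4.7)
The plan is to set up a self-referential inequality for $U_{2p}^{1/(2p)}$ that we then resolve via an AM--GM step. The key algebraic link is the identity
\[
\|tg\|_2^2 = Y_t + \|t\|_{HS}^2,
\]
which is immediate from the definitions of $Y_t$ and $Y_t^*$. Splitting by the sign of $Y_t$ and using that $\|tg\|_2^2 \ge 0$ in both cases gives $\|tg\|_2 \le \sqrt{|Y_t|} + \|t\|_{HS} \le \sqrt{|Y_t|} + V$. Taking the supremum over $t \in T$ and applying Minkowski in $L_{2p}$,
\[
U_{2p}^{1/(2p)} \;\le\; \bigl(\mathsf{E}\sup_{t\in T}|Y_t|^p\bigr)^{1/(2p)} + V.
\]

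Next, I would invoke Proposition \ref{decouplingl} to replace $Y_t$ by the decoupled chaos $Z_t$, getting $(\mathsf{E}\sup_t|Y_t|^p)^{1/p} \le 2\,(\mathsf{E}\sup_t|Z_t|^p)^{1/p}$, and then feed this into Proposition \ref{prop1}, which yields $(\mathsf{E}\sup_t|Z_t|^p)^{1/p} \le L\,U_{2p}^{1/(2p)}\,\gamma_{2,p}(T,d_\infty)$. Taking a further square root and combining with the displayed inequality above, and writing $x := U_{2p}^{1/(2p)}$ and $\gamma := \gamma_{2,p}(T,d_\infty)$, we obtain the scalar inequality
\[
x \;\le\; V + \sqrt{2L\,\gamma\,x}.
\]

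The last step is to resolve this inequality. Applying Young's inequality in the form $\sqrt{2L\gamma\cdot x} \le x/2 + L\gamma$ yields $x/2 \le V + L\gamma$, hence $x \le 2V + 2L\gamma$, which is the desired conclusion (after absorbing constants into $L$). The resulting bound $U_{2p}^{1/(2p)} \le L(\gamma_{2,p}(T,d_\infty) + V)$ is exactly the statement of the proposition.

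The main obstacle is the self-referential nature of the argument: to legitimately solve $x \le V + \sqrt{2L\gamma x}$ for $x$, we must know a priori that $x = U_{2p}^{1/(2p)}$ is finite. I would handle this exactly as in the proof of Proposition \ref{prop1}, by reducing to the case when $T$ is finite (in which case all relevant quantities are manifestly bounded) and then passing to the general case through a supremum over finite subsets, using that all three constants in the final bound depend only on $\gamma_{2,p}(T,d_\infty)$ and $V = \sup_{t\in T}\|t\|_{HS}$, both of which are stable under taking finite approximations.
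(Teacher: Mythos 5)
Your proof is correct and follows the same overall strategy as the paper: exploit the identity $\|tg\|_2^2 = Y_t + \|t\|_{HS}^2$, bound $\mathsf{E}\sup_t|Y_t|^p$ via Propositions \ref{decouplingl} and \ref{prop1} in terms of $U_{2p}$ itself, and then resolve the resulting self-referential inequality for $U_{2p}^{1/2p}$. The only (cosmetic) difference is that you pass to $L^{2p}$ norms via the pointwise bound $\|tg\|_2 \le \sqrt{|Y_t|}+V$ and Minkowski and close with Young's inequality, whereas the paper stays at the level of $2p$-th powers using $(a+b)^p\le 2^{p-1}(a^p+b^p)$ and solves the quadratic in $U_{2p}^{1/2}$ directly; your remark about reducing to finite $T$ to justify the finiteness needed for the final step is a valid point that the paper leaves implicit.
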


\begin{proof}
	We have $V^2=\sup _{t \in T}||t||_{H S}^2=\sup _{t \in T} \sum_{i, j \geq 1} t_{i, j}^2=\sup _{t \in T} \textsf{E} Y_t^*$. For $t \in$ $T$, we have, for $p\geq1$,
	\begin{equation*}
		||t g||_2^{2p}=(Y_t^*)^{p}=\left(Y_t+E Y_t^*\right)^{p} \leq \left(|Y_t|+V^2\right)^{p}\leq\left(\sup_{t\in T}|Y_t|+V^2\right)^{p}.
	\end{equation*}
	Thus,
	\begin{equation}\label{pfprop2}
		U_{2p} \leq \textsf{E} \left(\sup _{t \in T}\left|Y_t\right|+V^2\right)^{p}\leq2^{p-1}\left(\textsf{E}\sup_{t\in T}|Y_t|^{p}+V^{2p}\right).
	\end{equation}
	By Proposition \ref{decouplingl} and Proposition \ref{prop1},  we have
	\begin{equation}\label{sth}
		\textsf{E} \sup _{t \in T}\left|Y_t\right|^{p} \leq 2^{p}\textsf{E} \sup _{t \in T}\left|Z_t\right|^{p}\leq L^p(\gamma_{2,p}(T,d_{\infty}))^{p} U_{2p}^{1/2}.
	\end{equation}
	Plug (\ref{sth}) into (\ref{pfprop2}), we know that
	$$U_{2p}\leq L^p((\gamma_{2,p}(T,d_{\infty}))^{p} U_{2p}^{1/2}+L^pV^{2p}.$$
	Hence, 
	$$U_{2p}^{1/2}\leq L^p(\gamma_{2,p}(T,d_{\infty}))^{p}+L^p V^{p},$$
	namely,
	$$U_{2p}^{1/2p}\leq L\left(\gamma_{2,p}(T,d_{\infty})+V\right).$$
	
	This proves Proposition \ref{prop2}.
\end{proof}

Now, we are well prepared for the proof of Theorem \ref{pchaos}.
\begin{proof}[Proof of Theorem \ref{pchaos}]
	A combination of Proposition \ref{decouplingl}, Proposition \ref{prop1}, and Proposition \ref{prop2} gives Theorem \ref{pchaos}.
\end{proof}

\subsection{Johnson-Lindenstrauss Lemma}\label{sec-Intro}
This section deals with the Johnson-Lindenstrauss lemma in $\varphi$-sub-Gaussian setting.

\begin{definition}
	A random vector $X$ is called a $\varphi$-sub-Gaussian random vector if all one-dimensional marginals of $X$, i.e., the random variables $\langle X, x\rangle$ for any $x \in \mathbb{R}^n$, are $\varphi$-sub-Gaussian. The $\varphi$-sub-Gaussian norm of $X$ is defined as
	$$
	\tau_\varphi(X):=\sup _{x \in \mathrm{S}^{n-1}}\tau_\varphi\left(\langle X, x\rangle\right), 
	$$
	where $S^{n-1}$ denotes the Euclidean unit sphere in $\mathbb{R}^n$.
\end{definition}

\begin{lemma}\label{KKVV}(Kozachenko and Troshki \cite{KOV})
	For $\xi\in Sub_{\varphi^*}(\Omega)$ with mean zero, $\eta=\xi^2\in Sub_{\psi^*}(\Omega)$ with $\psi(x)=\varphi(\sqrt{x})$.
\end{lemma}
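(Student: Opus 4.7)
The plan is to cycle twice through the duality between exponential moment bounds and tail bounds, sandwiching a squaring substitution in the middle. Starting from the hypothesis in its MGF form via Lemma \ref{l2}, namely $\textsf{E}\exp(\lambda\xi) \le \exp(\varphi^*(a\lambda))$ with $a = \tau_{\varphi^*}(\xi)$, I would (i) pass to a tail bound for $|\xi|$, (ii) substitute $v=u^2$ to obtain a tail bound for $\xi^2$ with rate function $\psi(x) = \varphi(\sqrt{x})$, and (iii) invert back to an MGF bound for the centered variable $\eta = \xi^2 - \textsf{E}\xi^2$ governed by $\psi^*$.

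For step (i), a Chernoff argument combined with the Fenchel biduality $(\varphi^*)^* = \varphi$ — which applies because $\varphi$ is a convex, even Orlicz $N$-function — gives $\textsf{P}(|\xi| > u) \le 2\exp(-\varphi(u/a))$. Step (ii) is then the direct change of variables $v=u^2$, yielding
$$\textsf{P}(\xi^2 > v) \le 2\exp(-\psi(v/a^2)) \qquad (v > 0).$$
At this juncture one should pause to confirm from Lemma \ref{book} and the $Q$-condition on $\varphi$ that $\psi$, extended as an even function on $\mathbb{R}$, is again a valid Orlicz $N$-function, so that $Sub_{\psi^*}(\Omega)$ makes sense in the framework of Section 2; convexity of $\psi$ follows from combining the convexity of $\varphi$ with the monotonicity of $y\mapsto \varphi(y)/y$ recorded in Lemma \ref{book}.

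Step (iii) is the substantive one. Integration by parts gives
$$\textsf{E}\exp(\lambda\xi^2) \le 1 + 2\int_0^{\infty}\lambda\exp(\lambda v - \psi(v/a^2))dv,$$
and the exponent inside the integrand is bounded above by $\psi^*(a^2\lambda)$ by the very definition of the Young-Fenchel transform. Multiplying through by $e^{-\lambda\textsf{E}\xi^2}$ absorbs the leading linear-in-$\lambda$ contribution so that the resulting bound applies to the centered $\eta$, while the residual polynomial pre-factor produced by the integration is hidden inside a rescaling $b > a^2$, exploiting a $\Delta_2$-type growth of $\psi^*$ akin to the one used throughout Section \ref{secT}.

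The principal obstacle is precisely this last tail-to-MGF conversion: integration by parts inevitably produces an extra factor that is not of the form $\exp(\psi^*(\cdot))$, and absorbing it into the scaling constant $b$ requires sufficiently fast growth of $\psi^*$ at infinity, which in turn must be checked against the assumptions on $\varphi$. Everything else — the Chernoff step, the squaring substitution, and the identification of $\psi$ as an $N$-function — is routine given the structural results already collected in Section 2.
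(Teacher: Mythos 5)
The paper does not prove this lemma at all; it is quoted verbatim from Kozachenko and Troshki \cite{KOV}, so there is no in-text argument to compare yours against, and the review concerns your outline alone. Steps (i) and (ii) are sound: Chernoff together with the biconjugate identity $(\varphi^*)^* = \varphi$ gives $\textsf{P}(|\xi| > u) \le 2\exp(-\varphi(u/a))$, which is exactly (2.1) specialized to $\varphi^*$-sub-Gaussian variables, and the substitution $v = u^2$ is immediate.

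The entire content of the lemma sits in step (iii), and the gap there is real and not closed by what you wrote. The pointwise inequality $\lambda v - \psi(v/a^2) \le \psi^*(a^2\lambda)$ only bounds the integrand by a constant; since the integration is over $(0,\infty)$ this gives nothing until you actually split the exponent (e.g.\ $\lambda v - \psi(v/a^2) \le \tfrac12\psi^*(2a^2\lambda) - \tfrac12\psi(v/a^2)$) and carry out the centering and the absorption of the resulting prefactor, none of which you do. More seriously, your preliminary claim that $\psi(x) = \varphi(\sqrt{x})$ is again an Orlicz $N$-function satisfying the $Q$-condition is false in the most important case $\varphi(x) = x^2/2$: there $\psi(x) = |x|/2$ is linear, so both limits $\psi(x)/x \to 0$ and $\psi(x)/x \to \infty$ fail, and $\psi^*$ is identically zero on a neighborhood of the origin, so $Sub_{\psi^*}(\Omega)$ as defined in Section 2 is not even meaningful. (This is visible already at the level of small $\lambda$: $\textsf{E}\exp(\lambda\eta) > 1$ for all small $\lambda \ne 0$ since $\textsf{E}\eta = 0$ and $\textsf{E}\eta^2 > 0$, whereas $\exp(\psi^*(b\lambda)) \equiv 1$ there.) Your appeal to Lemma \ref{book} for convexity of $\psi$ is also off: convexity of $x \mapsto \varphi(\sqrt{x})$ is equivalent to monotonicity of $y \mapsto \varphi'(y)/y$, not of $y \mapsto \varphi(y)/y$, and only the latter is recorded in Lemma \ref{book}. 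As stated, the route you propose cannot succeed within the paper's own definition of $Sub_\psi(\Omega)$; you would need to consult \cite{KOV} for the hypotheses and precise form of $\psi$ under which their version of the claim actually holds.
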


\begin{lemma}\label{finitegamma}
	Assume that $\Delta_2-$condition holds for $\varphi(\cdot)$. When $T$ is finite, for any $p\geq1$, $\gamma_{\varphi,p}(T, d) \leq K_\varphi\Delta(T) \varphi^{*(-1)}(\log\operatorname{card} T)$, where $K_{\varphi}$ denotes a constant with respect to $\varphi$.
\end{lemma}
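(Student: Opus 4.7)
The strategy is to exhibit a single, very simple admissible sequence witnessing the bound, and then to evaluate the resulting tail sum using the $\Delta_2$-condition. Set $N=\operatorname{card} T$; the case $N=1$ is trivial since $\Delta(T)=0$, so assume $N\ge 2$. Let $n_1$ be the smallest integer with $n_1\ge 1$ and $2^{2^{n_1}}\ge N$. If $n_1\ge 2$, then $2^{2^{n_1-1}}<N$, hence $2^{n_1-1}<\log_2 N$; if $n_1=1$, then $N\le 4$ and $2^{n_1-1}=1$.

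Define the admissible sequence
\[
\mathcal{A}_n=\{T\}\ \text{for } 0\le n<n_1,\qquad \mathcal{A}_n=\bigl\{\{t\}:t\in T\bigr\}\ \text{for } n\ge n_1.
\]
This sequence is increasing, satisfies $\operatorname{card}\mathcal{A}_0=1$, and for $n\ge n_1$ has $\operatorname{card}\mathcal{A}_n=N\le 2^{2^{n_1}}\le 2^{2^n}$, so it is admissible. With this choice $\Delta(A_n(t))\le \Delta(T)$ for $n<n_1$ and $\Delta(A_n(t))=0$ for $n\ge n_1$. Consequently, if $k_p\ge n_1$ the sum defining $\gamma_{\varphi,p}(T,d)$ is empty and the bound is immediate; otherwise
\[
\gamma_{\varphi,p}(T,d)\le \Delta(T)\sum_{n=k_p}^{n_1-1}\varphi^{*(-1)}(2^n).
\]

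To estimate the sum, apply the $\Delta_2$-condition (with $b=2$): for every $n\ge 1$ one has $\varphi^{*(-1)}(2^n)\le (1-M_2)\,\varphi^{*(-1)}(2^{n+1})$, and iterating backwards from $n_1-1$ yields the geometric bound
\[
\sum_{n=1}^{n_1-1}\varphi^{*(-1)}(2^n)\le \varphi^{*(-1)}(2^{n_1-1})\sum_{k\ge 0}(1-M_2)^k=\tfrac{1}{M_2}\,\varphi^{*(-1)}(2^{n_1-1}).
\]
The possible $n=0$ term is handled separately by noting that $\varphi^{*(-1)}(1)$ is a constant depending only on $\varphi$; since $\log N\ge \log 2$ for $N\ge 2$, it is absorbed into a $\varphi$-dependent multiple of $\varphi^{*(-1)}(\log N)$.

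Finally, convert $2^{n_1-1}\le \log_2 N=(\log N)/\log 2$ into $\log N$. Because $\varphi^*$ is itself an Orlicz $N$-function, Lemma \ref{inv} applied with $\beta=1/\log 2>1$ gives $\varphi^{*(-1)}(\log_2 N)\le (1/\log 2)\,\varphi^{*(-1)}(\log N)$. Combining these bounds produces $\gamma_{\varphi,p}(T,d)\le K_\varphi\Delta(T)\varphi^{*(-1)}(\log N)$ for a constant $K_\varphi$ depending only on $\varphi$ through $M_2$, $\log 2$, and $\varphi^{*(-1)}(1)$. The argument is largely bookkeeping; the only mildly delicate step is the boundary accounting for the $n=0$ term and the edge cases $N\in\{2,3,4\}$, which I would handle uniformly by slightly inflating $K_\varphi$ as above.
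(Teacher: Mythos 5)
Your proof is correct and follows essentially the same approach as the paper: choose the admissible sequence that collapses to singleton partitions once $2^{2^n}\ge\operatorname{card} T$, then bound the finitely many surviving terms $\sum_n \varphi^{*(-1)}(2^n)$ as a geometric series via the $\Delta_2$-condition. You are in fact somewhat more careful than the paper about the $n=0$ term (where $2^0<2$ falls outside the stated range of the $\Delta_2$-condition), the small-$N$ edge cases, and the base-$2$ to natural-log conversion, but these are the same bookkeeping steps the paper handles implicitly (with an apparent typo of $\varphi^{*(-1)}(\operatorname{card} T)$ for $\varphi^{*(-1)}(\log\operatorname{card} T)$ in its last lines).
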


\begin{proof}[Proof of Lemma \ref{finitegamma}]
	Since $\Delta(A_n(t))=0$ when $2^{2^n}\geq\operatorname{card}(T)$, we know that $2^{n^*}\leq C_1\log(\operatorname{card}(T))$ holds for some $n^*$ and $C_1>1$, and $\Delta(A_{n^*+1}(t))=0$.
	For that, we have for some $M_2\in(0,1)$, 
	\begin{align*}
		\gamma_{\varphi,p}(T, d)&\leq\sup_{t\in T}\sum_{n=0}^{n^*}\varphi^{*(-1)}(2^n)\Delta(A_{n}(t))\nonumber\\
		&\leq\Delta(T)\sum_{n=0}^{n^*}\varphi^{*(-1)}(2^n)\nonumber\\
		&\leq\Delta(T)\sum_{n=0}^{n^*}(1-M_2)^{(n^*-n)}\varphi^{*(-1)}(2^{n^*})\nonumber\\
		&\leq C_1\Delta(T)\varphi^{*(-1)}\left(\operatorname{card}(T)\right)\sum_{n=0}^{n^*}(1-M_2)^{n}\nonumber\\
		&= C_1\frac{1-(1-M_2)^{n^*+1}}{M_2}\Delta(T)\varphi^{*(-1)}\left(\operatorname{card}(T)\right)\nonumber\\
		&\leq \frac{C_1}{M_2}\Delta(T)\varphi^{*(-1)}\left(\operatorname{card}(T)\right).\nonumber
	\end{align*}
\end{proof}
\begin{theorem}\label{mcs}
	Let $X$ be a set of $N$ points in $\mathbb{R}^n$ and $\varepsilon \in(0,1)$. Consider an $m \times n$ matrix $A$ whose rows are independent, mean zero, isotropic, and $\varphi^*$-sub-Gaussian random vectors in $\mathbb{R}^n$.  Rescale $A$ by defining the random projection
		$$
	\mathrm{P}:=\frac{1}{\sqrt{m}} A.
	$$
	Assume that $\varphi^{*(-1)}(\cdot)$ satisfies  
	$$
	m \geqslant C_{\varphi} \varepsilon^{-2} \log(N),
	$$
	where $C_{\varphi}$ is an appropriately large constant with respect to $\varphi$. Then, with a probability of at least 
	$$\exp\left(-\frac{m {\varepsilon}^2}{c \Delta(T) }+C_{\varphi} \log (N)\right),$$
	with $c$ denoting a universal constant, the map $\mathrm{P}$ preserves the distances between all points in $X$ with error $\varepsilon$, that is
	\begin{equation}\label{JL1}\quad(1-\varepsilon)||x-y||_2 \leqslant||P x-P y||_2 \leqslant(1+\varepsilon)||x-y||_2 \quad\end{equation} for all $x, y \in X$.
\end{theorem}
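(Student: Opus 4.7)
The plan is to reduce the conclusion (\ref{JL1}) to controlling the supremum of a centered $\psi^*$-sub-Gaussian process on a finite index set, and then invoke the tail inequality of Theorem \ref{dks} together with the finite-set estimate of Lemma \ref{finitegamma}. Set
$$T := \left\{\frac{x-y}{||x-y||_2} : x,y \in X,\; x\neq y\right\} \subset S^{n-1},$$
so that $\operatorname{card}(T) \leq N^2$. Note that (\ref{JL1}) is equivalent, up to constants in $\varepsilon$, to $\sup_{t\in T}|\,||Pt||_2^2 - 1| \leq 2\varepsilon + \varepsilon^2$, so it suffices to bound the supremum of the centered process
$$Y_t := ||Pt||_2^2 - 1 = \frac{1}{m}\sum_{i=1}^{m}\bigl(\langle A_i, t\rangle^2 - 1\bigr),\qquad t\in T;$$
isotropy of the rows gives $\textsf{E} Y_t = 0$.

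Next I would verify that $(Y_t)_{t\in T}$ is a $\psi^*$-sub-Gaussian process with $\psi(x) = \varphi(\sqrt{x})$. Each marginal $\langle A_i, t\rangle$ is $\varphi^*$-sub-Gaussian with norm bounded by a universal constant for $t \in S^{n-1}$, so Lemma \ref{KKVV} shows that $\langle A_i, t\rangle^2 - 1$ is a centered $\psi^*$-sub-Gaussian random variable. Using the factorization
$$\langle A_i, t\rangle^2 - \langle A_i, s\rangle^2 = \langle A_i, t+s\rangle\,\langle A_i, t-s\rangle$$
to extract the factor $||t-s||_2$, combined with the Banach-space property of $Sub_{\varphi^*}(\Omega)$, row independence, and the standard $1/\sqrt{m}$ contraction of $\psi^*$-sub-Gaussian norms under averaging, yields a metric estimate of the form $d(s,t) = \tau_{\psi^*}(Y_t - Y_s) \lesssim ||t-s||_2/\sqrt{m}$; in particular $\Delta(T) \lesssim 1/\sqrt{m}$.

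Since $T$ is finite with at most $N^2$ elements, Lemma \ref{finitegamma} gives
$$\tilde\gamma_{\psi,p}(T, d) \lesssim \Delta(T)\,\psi^{*(-1)}(\log N).$$
Plugging this into the tail bound (\ref{t06}) of Theorem \ref{dks} and choosing $u$ so that $\Delta(T)\psi^*(u) \asymp \varepsilon$ drives the deviation threshold down to $\varepsilon$, while the chaining contribution $\tilde\gamma_{\psi,p}(T,d)$ is absorbed as soon as $m \geq C_\varphi\varepsilon^{-2}\log N$. Tracking the resulting exponent $\psi^*(u) \asymp m\varepsilon^2/\Delta(T)$ together with the entropy factor $\exp(C_\varphi\log N)$ coming from the finiteness of $T$ produces the claimed failure probability $\exp\bigl(-m\varepsilon^2/(c\Delta(T)) + C_\varphi\log N\bigr)$.

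The main obstacle will be controlling the $\psi^*$-sub-Gaussian norm of the squared increment $\langle A_i, t\rangle^2 - \langle A_i, s\rangle^2$ with the correct $||t-s||_2/\sqrt{m}$ scaling: the quadratic map changes the Orlicz $N$-function from $\varphi^*$ to $\psi^*$, so the classical shortcut of bounding $\textsf{E}\exp(\lambda \xi^2)$ directly is not available. This requires combining Lemma \ref{KKVV} with the product factorization above and a Cauchy--Schwarz-type estimate inside $Sub_{\varphi^*}(\Omega)$, and also checking that $\psi^*$ inherits the $\Delta_2$-condition needed by Lemma \ref{finitegamma}. Once this increment bound is in hand, the remainder of the argument is a direct application of the generic chaining machinery developed in Sections \ref{secT} and \ref{secR}.
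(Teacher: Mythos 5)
Your proposal follows essentially the same route as the paper's proof: reduce to the normalized difference set $T$ with $\operatorname{card}(T)\le N^2$, pass to the squared marginals and use Lemma \ref{KKVV} to transfer to $\psi^*$-sub-Gaussianity with $\psi(x)=\varphi(\sqrt{x})$, then combine the tail bound of Theorem \ref{dks} with the finite-set estimate of Lemma \ref{finitegamma} (via Theorem \ref{eq1}) and the condition $m\ge C_\varphi\varepsilon^{-2}\log N$. The increment estimate you flag as the main obstacle is in fact glossed over in the paper's own argument, so your plan is, if anything, more explicit about what needs to be checked.
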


\begin{proof}[Proof of Theorem \ref{mcs}]
	We first rewrite (\ref{JL1}) as
	\begin{equation}\label{JL2}
		1-\varepsilon \leqslant||P z||_2 \leqslant 1+\varepsilon \quad \text { for all } z \in \mathrm{T},
	\end{equation}
	where
	$$
	\mathrm{T}:=\left\{\frac{x-y}{||x-y||_2}: x, y \in X \text { distinct points }\right\} .
	$$
	
	To prove the (\ref{JL2}), it is enough to prove
	\begin{equation}\label{JL3}
		1-\varepsilon \leqslant||P z||_2^2 \leqslant 1+\varepsilon \quad \text { for all } z \in \mathrm{T}.
	\end{equation}
	
	Observe that
	\begin{equation}\label{JL4}
		\begin{aligned}
			\textsf{P}\left(\sup_{z\in T}\left|\frac{1}{m} \sum_{i=1}^m\left\langle X_i, z\right\rangle^2-1\right| \leqslant \varepsilon \right)&=\textsf{P}\left(\sup_{z\in T}\left|\sum_{i=1}^m\left\langle X_i, z\right\rangle^2-m\right| \leqslant m\varepsilon \right)\\
			&=\textsf{P}\left(\sup_{z\in T}\left|\sum_{i=1}^m\left\langle X_i, z\right\rangle^2-m \mathrm{E}\left\langle X_i, z\right\rangle^2\right| \leqslant m\varepsilon \right)\\
			&=\textsf{P}\left(\sup_{z\in T}\left|\sum_{i=1}^m\left(\left\langle X_i, z\right\rangle^2- \mathrm{E}\left\langle X_i, z\right\rangle^2\right)\right| \leqslant m\varepsilon \right),
		\end{aligned}
	\end{equation}
	where the second equality holds because of isotropicity.
	
	By assumption, the random variables $\left\langle X_i, z\right\rangle^2-1$ are independent. For $\left\langle X_i, z\right\rangle \in Sub_{\varphi^*}(\Omega)$, by Lemma \ref{KKVV}, we know that $\left\langle X_i, z\right\rangle^2- \mathrm{E}\left\langle X_i, z\right\rangle^2 \in \mathrm{Sub}_{\psi^*}(\Omega)$, where $\psi(x)=\varphi(\sqrt x)$.
	
	By Theorem \ref{dks}, for $u\geq \sqrt 2$, we have, for $\psi^*$-sub-Gaussian process $Z_t$, 
	$$\textsf{P}\left(\sup _{t \in T}\left|Z_t\right| \geq c (\Delta_\rho(T) u+ \tilde\gamma_{\varphi}(T, \rho))\right) \leq \exp \left(-u\right),$$ 
	for some universal constant $c>0$.
	
	Combining Lemma \ref{finitegamma} and Theorem \ref{eq1}, we know that
	$$\tilde\gamma_{\varphi,p}(T, \rho) \leq K_\varphi\Delta_{\rho}(T) \varphi^{*(-1)}(\log\operatorname{card} T).$$
	Hence, for any $z \in T$, 
	\begin{equation}\label{JL5}
		\begin{aligned}
			\textsf{P}\left\{\sup _{z \in \mathrm{T}}\left|\frac{1}{m}\sum_{i=1}^m\left\langle X_i, z\right\rangle^2-1\right|>\varepsilon\right\}&\leq \exp\left(-\frac{m \varepsilon -c \tilde\gamma_{\varphi}(T, \rho)}{c \Delta(T)}\right)\\
			&\leq \exp\left(-\frac{m \varepsilon-cK_{\varphi} \Delta(T) \varphi^{*(-1)}(\log \operatorname{card} (T))}{c \Delta(T)}\right)\\
			&\leq \exp\left(-\frac{m \varepsilon-c'K_{\varphi} \Delta(T) \varphi^{*(-1)}(\log N)}{c \Delta(T)}\right)\\
            &\leq \exp\left(-\frac{m {\varepsilon}^2-c'K_{\varphi} \Delta(T) \log N}{c \Delta(T)}\right),
		\end{aligned}
	\end{equation}
	where the second-to-last inequality holds because $\operatorname{card} (T)\leqslant N^2$ according to the initial setting of $T$ and the last inequality is based on the properties of $\varphi^{-1}(\cdot)$.
	
	Therefore, for $m \geqslant C_{\varphi} \varepsilon^{-2} \log(N)$ with sufficiently large constant $C_{\varphi}$, The proof is complete. 
	
\end{proof}

\subsection{Convex signal recovery from $\varphi$-sub-Gaussian measurements}
This section develops some problems in signal recovery. We demonstrate a universal error bound for $\varphi$-sub-Gaussian measurements. To achieve the bound, we obtain a lower bound for the minimum conic singular value for a random matrix $\mathbf{\Phi}$ that satisfies certain conditions, which is of independent interest.

Some background material on signal recovery is presented first, and then we give the main theorem.
\begin{definition}
	A set $\mathbf{C}\subseteq\mathbb{R}^n$ is called a cone if $\mathbf{C}=\theta \mathbf{C}$ for all $\theta>0$. For a proper convex function $f:\mathbb{R}^n\to\bar{\mathbb{R}}$, the descent cone $\mathcal{D}(f,\mathbf{x})$ of the function $f$ at a point $\mathbf{x}\in\mathbb{R}^n$ is defined as 
	$$\mathcal{D}(f,\mathbf{x})=\bigcup_{\theta>0}\{\mathbf{u}\in\mathbb{R}^n,\quad f(\mathbf{x+\theta u})\leq f(\mathbf{x})\}.$$
\end{definition}

\begin{remark}
	The descent cone of a convex function is always a convex cone, though not necessarily closed.
\end{remark}

\begin{definition}
	For a $m\times d$ matrix $\mathbf{\Phi}$ and a cone $\mathbf{C}\in\mathbb{R}^n$ not necessarily convex, the minimum singular value of $\mathbf{\Phi}$ with respect to the cone $\mathbf{C}$ is defined as 
	$$\lambda_{min}(\mathbf{\Phi};\mathbf{C})=\inf_{\mathbf{u}\in\mathbf{C}\cap S^{n-1}}||\mathbf{\Phi u}||_2,$$
	with $S^{n-1}$ denoting the Euclidean unit sphere in $\mathbb{R}^n$. Throughout this section, $||\cdot||_2$ denotes the Euclidean $\ell$-2 norm.
\end{definition}

We briefly recall the framework for many convex optimization methods for recovering a structured signal from linear measurements.

For an unknown but structured signal $\mathbf{x}^*\in\mathbb{R}^n$, suppose we have observed a vector $\mathbf{y}\in\mathbb{R}^m$ that consists of $m$ linear measurements $\mathbf{y}=\mathbf{\Phi x^*+e}$. It is assumed that $\mathbf{\Phi}$ is a known $m\times n$ sampling matrix of the form $\mathbf{\Phi}=(\mathbf{\varphi_1},\cdots,\mathbf{\varphi_m})^t$ with $\mathbf{\varphi_i}\in\mathbb{R}^n\,(i=1,\cdots,m)$ independent and identically distributed random vectors, and $\mathbf{e}\in\mathbb{R}^m$ is vector of unknown noises. 
It is expected to reconstruct the unknown $\mathbf{x}^*$ via convex optimization. For a proper convex function $f:\mathbb{R}^n\to\bar{\mathbb{R}}$, the following convex program is usually studied:
\begin{align}\label{signalre}
	minimize \quad f(\mathbf{x}) \quad subject \,\,\,to\quad  ||\mathbf{\Phi x-y}||_2\leq \eta
\end{align}
for a specified bound $\eta$ on the norm of the noise.

The following proposition in  Tropp \cite{TR} provides an error bound for convex recovery.
\begin{proposition}\label{sreb1}(Tropp \cite{TR})
	For any solution $\mathbf{x}_{\eta}$ to the the convex optimization problem described in (\ref{signalre}), we have
	\begin{align}
		||\mathbf{x_{\eta}-x^*}||_2\leq\frac{2\eta}{\lambda_{min}(\mathbf{\Phi},\mathcal{D}(f,\mathbf{x^*}))}.
	\end{align}
\end{proposition}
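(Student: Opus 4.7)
The plan is to reduce the bound to a direct application of the definition of the minimum conic singular value, where the relevant cone is the descent cone $\mathcal{D}(f,\mathbf{x}^*)$. The whole argument is essentially a two-line chain: (i) identify the error vector as a member of this cone, and (ii) bound its image under $\mathbf{\Phi}$ using the feasibility of both $\mathbf{x}^*$ and $\mathbf{x}_\eta$. Implicit in the setup is the assumption $\|\mathbf{e}\|_2 \le \eta$; otherwise $\mathbf{x}^*$ itself would not be feasible for the program \eqref{signalre} and the statement would be vacuous.

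First I would verify that $\mathbf{h} := \mathbf{x}_\eta - \mathbf{x}^*$ lies in $\mathcal{D}(f,\mathbf{x}^*)$. Because $\|\mathbf{\Phi}\mathbf{x}^* - \mathbf{y}\|_2 = \|\mathbf{e}\|_2 \le \eta$, the true signal $\mathbf{x}^*$ is feasible for \eqref{signalre}, so the optimality of $\mathbf{x}_\eta$ forces $f(\mathbf{x}_\eta) \le f(\mathbf{x}^*)$. Taking $\theta = 1$ in the definition of the descent cone (with $\mathbf{u} = \mathbf{h}$) places $\mathbf{h}$ in $\mathcal{D}(f,\mathbf{x}^*)$.

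Next I would bound $\|\mathbf{\Phi}\mathbf{h}\|_2$ from above by $2\eta$. Writing
\[
\mathbf{\Phi}\mathbf{h} = (\mathbf{\Phi}\mathbf{x}_\eta - \mathbf{y}) - (\mathbf{\Phi}\mathbf{x}^* - \mathbf{y})
\]
and applying the triangle inequality, each summand is at most $\eta$ (the first by the feasibility constraint of \eqref{signalre} satisfied by $\mathbf{x}_\eta$, the second by the noise assumption), yielding $\|\mathbf{\Phi}\mathbf{h}\|_2 \le 2\eta$.

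Finally, if $\mathbf{h} = \mathbf{0}$ the conclusion is trivial; otherwise $\mathbf{h}/\|\mathbf{h}\|_2 \in \mathcal{D}(f,\mathbf{x}^*) \cap S^{n-1}$, so by the definition of $\lambda_{\min}(\mathbf{\Phi};\mathcal{D}(f,\mathbf{x}^*))$ we obtain $\lambda_{\min}(\mathbf{\Phi};\mathcal{D}(f,\mathbf{x}^*))\,\|\mathbf{h}\|_2 \le \|\mathbf{\Phi}\mathbf{h}\|_2 \le 2\eta$, which rearranges to the advertised bound. There is no real obstacle here — the proof is purely structural; the substantive work of the section lies in controlling $\lambda_{\min}(\mathbf{\Phi};\mathcal{D}(f,\mathbf{x}^*))$ from below for $\varphi$-sub-Gaussian sampling matrices, which is handled separately.
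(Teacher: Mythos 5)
Your proof is correct, and it is the standard argument (it matches Tropp's Proposition~2.6 in \cite{TR}); the paper itself offers no proof here, merely citing Tropp, so there is nothing in the paper to compare against. The three steps — feasibility of $\mathbf{x}^*$ forcing $f(\mathbf{x}_\eta)\le f(\mathbf{x}^*)$ and hence $\mathbf{h}\in\mathcal{D}(f,\mathbf{x}^*)$ via $\theta=1$, the triangle inequality giving $\|\mathbf{\Phi h}\|_2\le 2\eta$, and normalizing $\mathbf{h}$ to invoke the definition of $\lambda_{\min}(\mathbf{\Phi};\mathbf{C})$ — are exactly right, and you correctly flag the hypothesis $\|\mathbf{e}\|_2\le\eta$ as load-bearing (it is in fact stated explicitly in Theorem~\ref{mcsr}). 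One cosmetic note: the final inequality $\lambda_{\min}\|\mathbf{h}\|_2\le 2\eta$ rearranges to the stated bound only when $\lambda_{\min}>0$; when $\lambda_{\min}=0$ the right-hand side is $+\infty$ and the claim is vacuous, which is consistent with the $\vee 0$ appearing in the denominator of Theorem~\ref{mcsr}.
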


From the proposition \ref{sreb1}, we know the task for bounding the error of convex signal recovery is to obtain the lower bound of $\lambda_{min}(\mathbf{\Phi},\mathcal{D}(f,x^*))$.
Mendelson \cite{M2015} obtained a lower bound of the minimum conic singular value as a nonnegative empirical process, providing a powerful tool.  Tropp \cite{TR} got the following result. 

\begin{proposition}\label{smallb} (Tropp \cite{TR})
	A cone $\mathbf{C}\subseteq\mathbb{R}^n$ is fixed. Suppose $\mathbf{\varphi_1},\cdots,\mathbf{\varphi_m}$ are independent copied of random vector $\mathbf{\varphi_0}\in\mathbb{R}^n$ and the sampling matrix $\mathbf{\Phi}$ is of the form $\mathbf{\Phi}=(\mathbf{\varphi_1},\cdots,\mathbf{\varphi_m})^t$. Then for any $\xi>0$ and $t>0$, with probability exceeding $1-e^{-t^2/2}$
	\begin{equation}
		\begin{aligned}
			\lambda_{min}(\mathbf{\Phi};\mathbf{C})&=\inf_{u\in\mathbf{C}\cap S^{n-1}}\left(\sum_{i=1}^m|\left\langle\mathbf{\varphi_i,u}\right\rangle|^2\right)^{1/2}\\
			&\geq\xi\sqrt{m}Q_{2\xi}(\mathbf{C}\cap S^{n-1};\mathbf{\varphi_0})-2W_m(\mathbf{C}\cap S^{n-1};\mathbf{\varphi_0})-\xi t
		\end{aligned}
	\end{equation}
	Here, $$Q_{2\xi}(\mathbf{C}\cap S^{n-1};\mathbf{\mathbf{\varphi_0}})=\displaystyle\inf_{u\in\mathbf{C}\cap S^{n-1}}\textsf{P}\{|\langle \mathbf{\varphi_0,u}\rangle|\geq\xi\}\quad\text{for}\quad\xi>0,$$ and 
	$$W_m(\mathbf{C}\cap S^{n-1};\mathbf{\varphi_0})=\textsf{E}\sup_{u\in\mathbf{C}\cap S^{n-1}}\langle \frac{1}{\sqrt{m}}\sum_{i=1}^m\varepsilon_i\mathbf{\varphi_i,u}\rangle,$$
	with $\varepsilon_i\,\,(i=1,\cdots,m)$ independent Rademacher random variables independent from $\mathbf{\varphi_i}\,\,(i=1,\cdots,m)$.
\end{proposition}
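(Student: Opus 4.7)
The plan is to reconstruct Tropp's argument via Mendelson's small-ball method. Abbreviate $E := \mathbf{C}\cap S^{n-1}$ and $Q := Q_{2\xi}(E;\mathbf{\varphi}_0)$. The core idea is to replace the hard event $\{|\langle\mathbf{\varphi}_i,u\rangle|\geq 2\xi\}$ by a Lipschitz surrogate, turn $\|\mathbf{\Phi}u\|_2$ into an $\ell_1$-style quantity via Cauchy--Schwarz, and then process the resulting empirical process by Rademacher symmetrization, the Ledoux--Talagrand contraction principle, and a bounded-differences inequality.

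\textbf{Step 1 (soft indicator and $\ell_1$ passage).} First I would introduce the trapezoidal bump $\chi_\xi(s) := \min\bigl(1,\,(|s|/\xi - 1)_+\bigr)$, which takes values in $[0,1]$, satisfies $\chi_\xi(0)=0$, is $(1/\xi)$-Lipschitz, and sandwiches the indicator: $\mathbf{1}\{|s|\geq 2\xi\}\leq \chi_\xi(s)\leq \mathbf{1}\{|s|\geq \xi\}$. The pointwise inequality $|s|\geq \xi\chi_\xi(s)$, combined with the Cauchy--Schwarz inequality $\sum_i a_i\leq \sqrt{m}\,(\sum_i a_i^2)^{1/2}$, yields
$$\|\mathbf{\Phi}u\|_2 \;\geq\; \frac{1}{\sqrt m}\sum_{i=1}^m |\langle \mathbf{\varphi}_i,u\rangle| \;\geq\; \frac{\xi}{\sqrt m}\sum_{i=1}^m \chi_\xi(\langle \mathbf{\varphi}_i,u\rangle).$$
Crucially, this $\ell_1$-to-$\ell_2$ passage produces the correct $\sqrt m\cdot Q$ scaling; the naive $\ell_2$ bound $\|\mathbf{\Phi}u\|_2\geq \xi\sqrt{\#\{i:|\langle\mathbf{\varphi}_i,u\rangle|\geq\xi\}}$ would only give the weaker $\xi\sqrt{mQ}$. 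Thus it remains to bound $Z := \inf_{u\in E}\sum_{i=1}^m \chi_\xi(\langle \mathbf{\varphi}_i,u\rangle)$ from below with high probability.

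\textbf{Steps 2 and 3 (expectation and concentration).} Since $\textsf{E}\chi_\xi(\langle\mathbf{\varphi}_0,u\rangle)\geq \textsf{P}\{|\langle\mathbf{\varphi}_0,u\rangle|\geq 2\xi\}\geq Q$ uniformly in $u\in E$, centering gives
$$\textsf{E} Z \;\geq\; mQ \;-\; \textsf{E}\sup_{u\in E}\Big|\sum_{i=1}^m \bigl(\chi_\xi(\langle\mathbf{\varphi}_i,u\rangle) - \textsf{E}\chi_\xi(\langle\mathbf{\varphi}_0,u\rangle)\bigr)\Big|.$$
Standard Rademacher symmetrization dominates this supremum by a constant multiple of $\textsf{E}\sup_u|\sum_i\varepsilon_i\chi_\xi(\langle\mathbf{\varphi}_i,u\rangle)|$, and the Ledoux--Talagrand contraction principle, applicable because $\chi_\xi$ is $(1/\xi)$-Lipschitz with $\chi_\xi(0)=0$, peels off $\chi_\xi$ at the cost of a factor $1/\xi$, leaving a constant multiple of $\xi^{-1}\sqrt m\,W_m$. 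Tracking constants carefully yields $\textsf{E} Z\geq mQ - 2\sqrt m W_m/\xi$. For the concentration, replacing any single $\mathbf{\varphi}_i$ alters $Z$ by at most $1$ since $0\leq\chi_\xi\leq 1$, so the one-sided McDiarmid inequality gives $\textsf{P}\{Z\leq \textsf{E} Z - s\}\leq \exp(-2s^2/m)$; choosing $s$ proportional to $t\sqrt m$ yields the required $e^{-t^2/2}$ tail. Combining the two estimates and multiplying through by $\xi/\sqrt m$ delivers the claimed inequality, with the $\xi t$ constant absorbing the McDiarmid normalization.

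\textbf{Main obstacle.} The delicate point is tracking the absolute values through the symmetrization--contraction chain: one must apply Ledoux--Talagrand to a process with absolute value on the outside, which hinges on the two structural features $\chi_\xi(0)=0$ and $(1/\xi)$-Lipschitz continuity, and which converts the Lipschitz constant into an exact factor $\xi^{-1}$ without generating stray $\sqrt m$ factors. A secondary concern is the measurability of the infimum defining $Z$, which is handled routinely by restricting to a countable dense subset of $E$ since $u\mapsto \chi_\xi(\langle\mathbf{\varphi}_i,u\rangle)$ is continuous.
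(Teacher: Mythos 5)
The paper simply cites this proposition from Tropp \cite{TR} and does not reprove it, so your reconstruction of Tropp's argument is itself the object under review, and it is essentially faithful to the original small-ball method: soft indicator, Cauchy--Schwarz to pass from $\ell_2$ to $\ell_1$, Rademacher symmetrization, Ledoux--Talagrand contraction, and a one-sided bounded-differences inequality. The sandwich $\mathbf{1}\{|s|\geq 2\xi\}\leq\chi_\xi(s)\leq\mathbf{1}\{|s|\geq\xi\}$, the pointwise bound $|s|\geq\xi\chi_\xi(s)$, and the use of $Q_{2\xi}$ all match Tropp's scheme, and your McDiarmid step (changing one $\mathbf{\varphi}_i$ moves $Z$ by at most $1$) gives the $e^{-t^2/2}$ tail correctly.

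One constant-tracking point in your ``Main obstacle'' paragraph is off and, if followed literally, would yield a weaker constant than $2W_m$. You propose to symmetrize to $\textsf{E}\sup_u\bigl|\sum_i\varepsilon_i\chi_\xi(\langle\mathbf{\varphi}_i,u\rangle)\bigr|$ and then apply the contraction principle to that quantity with absolute values on the outside. Symmetrization costs a factor $2$, and the absolute-value version of Ledoux--Talagrand contraction costs another factor $2$, giving $4\sqrt m W_m/\xi$, not $2\sqrt m W_m/\xi$; moreover, after contraction you land on $\textsf{E}\sup_u\bigl|\sum_i\varepsilon_i\langle\mathbf{\varphi}_i,u\rangle\bigr|$, which dominates $\sqrt m\,W_m$ only when $\mathbf{C}\cap S^{n-1}$ is symmetric, which is not assumed. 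The fix is to keep everything one-sided: you only need to bound $\textsf{E}\sup_u\sum_i\bigl(\textsf{E}\chi_\xi(\langle\mathbf{\varphi}_0,u\rangle)-\chi_\xi(\langle\mathbf{\varphi}_i,u\rangle)\bigr)$. One-sided symmetrization gives $\leq 2\,\textsf{E}\sup_u\sum_i\varepsilon_i\chi_\xi(\langle\mathbf{\varphi}_i,u\rangle)$, and the one-sided contraction principle (for contractions vanishing at $0$) has constant $1$, so applying it to $\xi\chi_\xi$ gives $\leq\tfrac{2}{\xi}\textsf{E}\sup_u\sum_i\varepsilon_i\langle\mathbf{\varphi}_i,u\rangle=\tfrac{2\sqrt m}{\xi}W_m$, which is exactly the constant $2$ that the statement requires and which avoids any symmetry assumption on the cone. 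With this adjustment your argument closes, and in fact the McDiarmid step as you set it up gives the slightly sharper $\xi t/2$ in place of $\xi t$, which of course still implies the claim.
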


With all these preliminaries, we are finally well prepared for the error bound for signal recovery under $\varphi$-sub-Gaussian measurements by Mendelson's Small Ball Method (see Mendelson\cite{M2015} and  Tropp \cite{TR} for more details about this technique). In this section, $T=\{1,\cdots,m\}$ is  a metric space endowed with the distance $d(i,j)=\tau_{\varphi}(\mathbf{\varphi_i,\varphi_j})$ for $1\leq i,j\leq m$. Some assumptions are described as follows. It is assumed that $\mathbf{\varphi_0}$ is a random vector in $\mathbb{R}^n$ satisfying:
\begin{itemize}
	\item $\textsf{E}\mathbf{\varphi_0}=0$; [Centering]
	\item For some constant $\alpha>0$, $\textsf{E}|\langle\mathbf{\varphi_0},\mathbf{u}\rangle|\geq\alpha$ for all $\mathbf{u}\in S^{n-1}$; [Nondegeneracy] 
	\item $\mathbf{\varphi_0}$ is a $\varphi$-sub-Gaussian random vector; [$\varphi$-sub-Gaussian marginals] 
	\item The eccentricity $\mu:= \Delta(T)/\alpha$ of the distribution should be small. [Low eccentricity] 
\end{itemize}

Under these assumptions, the following theorem demonstrates a lower bound for the minimum conic singular value of a random matrix $\mathbf{\Phi}$.

\begin{theorem}\label{minsingle}
	Suppose $\mathbf{\Phi=(\varphi_1\cdots\varphi_m)^t}$ is an $m\times d$ random matrix with $\mathbf{\varphi_i}\,\,(i=1,\cdots,m)$ independent and identically distributed copied of  $\mathbf{\varphi_0}$.  Random vector $\mathbf{\varphi_0}$ satisfies the above-mentioned conditions. For a cone $\mathbf{C}\subset\mathbb{R}^n$ not necessarily convex, with probability exceeding $1-e^{-t^2/2}$,
	$$\lambda_{min}(\mathbf{\Phi};\mathbf{C})\geq C_1\sqrt{m} \alpha\mu^{-2}-C_2\gamma_{\varphi,p}(T, d)+C_3\Delta(T)-\alpha t/3,$$
	with $C_1,C_2,C_3>0$ some universal constants.
\end{theorem}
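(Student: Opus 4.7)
The plan is to follow Mendelson's small-ball method by invoking Proposition \ref{smallb} with the specific choice $\xi = \alpha/3$. This immediately yields, with probability at least $1 - e^{-t^2/2}$,
$$\lambda_{min}(\mathbf{\Phi}; \mathbf{C}) \geq \xi\sqrt{m}\, Q_{2\xi}(\mathbf{C}\cap S^{n-1}; \mathbf{\varphi_0}) - 2W_m(\mathbf{C}\cap S^{n-1}; \mathbf{\varphi_0}) - \alpha t/3,$$
so that the $-\alpha t/3$ tail term of the theorem is already produced. The remaining work is to lower-bound $Q_{2\xi}$ and upper-bound $W_m$.

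For the small-ball probability, I would apply the Paley--Zygmund inequality to $|\langle \mathbf{\varphi_0}, \mathbf{u}\rangle|$ at each fixed $\mathbf{u}\in S^{n-1}$. The nondegeneracy assumption gives the first-moment lower bound $\textsf{E}|\langle \mathbf{\varphi_0}, \mathbf{u}\rangle| \geq \alpha$, while the $\varphi$-sub-Gaussian marginal property bounds the second moment by a universal multiple of $\tau_\varphi(\mathbf{\varphi_0})^2$, which in turn is controlled by $\Delta(T)^2$. Combining these via Paley--Zygmund and optimising yields
$$Q_{2\xi} \;\gtrsim\; \frac{\alpha^2}{\Delta(T)^2} \;=\; \mu^{-2},\qquad\text{so that}\qquad \xi\sqrt{m}\,Q_{2\xi} \;\gtrsim\; \alpha\sqrt{m}\,\mu^{-2},$$
which produces the positive leading term $C_1\sqrt{m}\,\alpha\,\mu^{-2}$.

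To control $W_m$, I would view the Rademacher-weighted process $Z_{\mathbf{u}} = m^{-1/2}\sum_{i=1}^m \varepsilon_i \langle \mathbf{\varphi}_i, \mathbf{u}\rangle$, $\mathbf{u}\in \mathbf{C}\cap S^{n-1}$, as a centered stochastic process. Each summand $\varepsilon_i \langle \mathbf{\varphi}_i, \mathbf{u}\rangle$ is symmetric and $\varphi$-sub-Gaussian; using that $\operatorname{Sub}_\varphi(\Omega)$ is a Banach space under $\tau_\varphi$ together with the square-additivity of $\tau_\varphi$ for independent symmetric variables from Buldygin--Kozachenko \cite{BK}, the increments $Z_{\mathbf{u}}-Z_{\mathbf{v}}$ inherit a $\tau_\varphi$ bound in the metric $d$ described before the theorem. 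One can then apply Theorem \ref{dks} at the parameter $p$ to obtain
$$W_m \;\lesssim\; \tilde\gamma_{\varphi,p}(T,d) \;+\; (\text{lower-order terms in }\Delta(T)),$$
and invoking Theorem \ref{eq1} to pass to $\gamma_{\varphi,p}$. Substituting the estimates of Steps 2 and 3 back into the inequality of Step 1 delivers the asserted bound after absorbing the constants into $C_1, C_2, C_3$.

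The main obstacle is Step 3: making the $\varphi$-sub-Gaussian structure of the vector-valued sample $\mathbf{\varphi}_i$ interact correctly with the Rademacher weights so that the increment hypothesis of Theorem \ref{dks} is satisfied in precisely the advertised metric $d(i,j)=\tau_\varphi(\mathbf{\varphi_i},\mathbf{\varphi_j})$ on $T=\{1,\dots,m\}$. In particular, one must verify that the chaining functional that drops out of the generic chaining bound for $W_m$ is $\gamma_{\varphi,p}(T,d)$ rather than a functional on $\mathbf{C}\cap S^{n-1}$, and one must keep careful track of the factors of $\sqrt{m}$ and of $\Delta(T)$ so that the coefficients and signs match the stated form $-C_2\gamma_{\varphi,p}(T,d)+C_3\Delta(T)$. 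The small-ball Paley--Zygmund step and the application of Proposition \ref{smallb} are comparatively routine.
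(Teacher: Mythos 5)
Your proposal follows essentially the same route as the paper: Proposition \ref{smallb} with $\xi=\alpha/3$, Paley--Zygmund combined with the nondegeneracy assumption and the second-moment bound from Corollary \ref{signalrlemma} to get $Q_{2\xi}\gtrsim\alpha^2/\Delta^2(T)$, and Theorem \ref{dks} applied to the Rademacher-weighted process to bound $W_m$ by $\gamma_{\varphi,p}+\Delta(T)$ terms. The one place you leave open --- verifying the increment hypothesis for $Z_{\mathbf{u}}=m^{-1/2}\sum_i\varepsilon_i\langle\boldsymbol{\varphi}_i,\mathbf{u}\rangle$ --- is handled in the paper not by square-additivity but by a direct moment-generating-function computation giving $\tau_\varphi(\langle\mathbf{h},\mathbf{u}-\mathbf{v}\rangle)\le m^{-1/2}\sum_i\tau_\varphi(\langle\boldsymbol{\varphi}_i,\mathbf{u}-\mathbf{v}\rangle)$ via the triangle inequality for the norm $\tau_\varphi$, and the index-set ambiguity you flag (chaining over $\mathbf{C}\cap S^{n-1}$ versus the advertised $T=\{1,\dots,m\}$) is present in the paper's own writing as well, so it is not a defect of your argument relative to theirs.
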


\begin{proof}
	For a fixed cone $\mathbf{C}\in\mathbb{R}^n$, Proposition \ref{smallb} indicated that with probability at least $1-e^{-t^2/2}$, for all $\xi>0$ and $t>0$,
	\begin{equation}\label{basic}
		\begin{aligned}
			\lambda_{min}(\mathbf{\Phi};\mathbf{C})\geq\xi\sqrt{m}Q_{2\xi}(\mathbf{C}\cap S^{n-1};\mathbf{\varphi_0})-2W_m(\mathbf{C}\cap S^{n-1};\mathbf{\varphi_0})-\xi t
		\end{aligned}
	\end{equation}
	
	By Paley-Zygmund inequality, we know that for any $\mathbf{u}\in\mathbf{C}\cap S^{n-1}$,
	\begin{equation*}
		\textsf{P}\{|\langle\mathbf{\varphi_0},\mathbf{u}\rangle|\geq2\xi\}\geq\frac{\left[(\textsf{E}{|\langle\mathbf{\varphi_0},\mathbf{u}\rangle|}-2\xi)\vee0\right]^2}{{\textsf{E}|\langle\mathbf{\varphi_0},\mathbf{u}\rangle|}^2}.
	\end{equation*}
	
	By $\varphi$-sub-Gaussian marginal condition, setting $p=2$ in Corollary \ref{signalrlemma}, we know that
	\begin{equation*}
		\textsf{E}{|\langle\mathbf{\varphi_0},\mathbf{u}\rangle|}^2\leq C_4\Delta^2(T). 
	\end{equation*}
	for some universal constant $C_4>0$.
	
	With the assumption that $\textsf{E}|\langle\mathbf{\varphi_0},\mathbf{u}\rangle|\geq\alpha$, we know that for $\xi<\alpha/2$,
	\begin{equation}\label{QQ}
		Q_{2\xi}(\mathbf{C}\cap S^{n-1};\mathbf{\varphi_0})=\displaystyle\inf_{u\in\mathbf{C}\cap S^{n-1}}\textsf{P}\{|\langle \mathbf{\varphi_0,u}\rangle|\geq\xi\}\geq C_5\frac{(\alpha-2\xi)^2}{\Delta^2(T)}.
	\end{equation}
	
	The following calculation implies that for random vector $\mathbf{h}=\frac{1}{\sqrt{m}}\sum_{i=1}^ m\varepsilon_i\mathbf{\varphi_i}$, $\langle\mathbf{h,u}\rangle$ is a $\varphi$-sub-Gaussian random variable for each $\mathbf{u}\in\mathbb{R}^n$. For $\lambda>0$,
	\begin{align*}
		\textsf{E}\exp{(\lambda\langle\mathbf{h,u}\rangle)}&=\textsf{E}\exp{\left(\frac{\lambda}{\sqrt{m}}\sum_{i=1}^m\varepsilon_i\langle\mathbf{\varphi_i,u}\rangle\right)}\\
		&=\textsf{E}_X\textsf{E}_{\varepsilon}\exp{\left(\frac{\lambda}{\sqrt{m}}\sum_{i=1}^m\varepsilon_i\langle\mathbf{\varphi_i,u}\rangle\right)}\\
		&=\frac{1}{2}\textsf{E}_X\exp{\left(\frac{\lambda}{\sqrt{m}}\sum_{i=1}^m\langle\mathbf{\varphi_i,u}\rangle\right)}+\frac{1}{2}\textsf{E}_X\exp{\left(-\frac{\lambda}{\sqrt{m}}\sum_{i=1}^m\langle\mathbf{\varphi_i,u}\rangle\right)}\\
		&\leq \frac{1}{2}\exp{\left(\varphi\left(\frac{\lambda}{\sqrt{m}}\tau_{\varphi}(\sum_{i=1}^m\langle\mathbf{\varphi_i,u}\rangle)\right)\right)}+\frac{1}{2}\exp{\left(\varphi\left(\frac{\lambda}{\sqrt{m}}\tau_{\varphi}(-\sum_{i=1}^m\langle\mathbf{\varphi_i,u}\rangle)\right)\right)}\\
		&=\exp{\left(\varphi\left(\frac{\lambda}{\sqrt{m}}\tau_{\varphi}(\sum_{i=1}^m\langle\mathbf{\varphi_i,u}\rangle)\right)\right)}\leq\exp{\left(\varphi\left(\frac{\lambda}{\sqrt{m}}\sum_{i=1}^m\tau_{\varphi}(\langle\mathbf{\varphi_i,u}\rangle)\right)\right)},
	\end{align*}
	with the last two inequalities using the basic property of the norm. The subscripts $\varepsilon$ and $X$ of expectation are intended to remind us about the sources of randomness used in taking these expectations.
	
	We can also know from the calculation above that
	$$\tau_{\varphi}(\langle\mathbf{h,u}\rangle)=\tau_{\varphi}\left(\frac{1}{\sqrt{m}}\sum_{i=1}^m\varepsilon_i\langle\mathbf{\varphi_i,u}\rangle\right)\leq\frac{1}{\sqrt{m}}\sum_{i=1}^m\tau_{\varphi}\left(\langle\mathbf{\varphi_i,u}\rangle\right).$$
	
	Then we get for $\mathbf{u,v}\in\mathbb{R}^n$,
	\begin{align*}
		\tau_{\varphi}(\langle\mathbf{h,u-v}\rangle)\leq\frac{1}{\sqrt{m}}\sum_{i=1}^m\tau_{\varphi}\left(\langle\mathbf{\varphi_i,u-v}\rangle\right).
	\end{align*}
	
	According to Theorem \ref{dks}, we know that 
	\begin{align}\label{Wm}
		W_m(\mathbf{C}\cap S^{n-1};\mathbf{\varphi_0})=\textsf{E}\sup_{u\in\mathbf{C}\cap S^{n-1}}\langle\mathbf{h,u}\rangle\leq  C_6\gamma_{\varphi,p}(T, d)+C_7\Delta(T),
	\end{align}
	for some constant $C_6, C_7>0$.
	
	Combing (\ref{basic}), (\ref{QQ}) and (\ref{Wm}), we obtain that for $\xi<\alpha/2$, with probability at least $1-e^{-t^2/2}$,
	\begin{equation*}
		\lambda_{min}(\mathbf{\Phi};\mathbf{C})\geq C_8\xi\sqrt{m} \frac{(a-2\xi)^2}{\Delta^2(T)}-C_9\gamma_{\varphi,p}(T, d)+C_{10}\Delta(T)-\xi t
	\end{equation*}
	for some for some constant $C_8, C_9, C_{10}>0$. Select $\xi=\alpha/3$, and then we get ,with probability at least $1-e^{-t^2/2}$,
	\begin{equation}
		\lambda_{min}(\mathbf{\Phi};\mathbf{C})\geq C_{11}\sqrt{m} \frac{\alpha^3}{\Delta^2(T)}-C_{12}\gamma_{\varphi,p}(T, d)+C_{13}\Delta(T)-\alpha t/3.
	\end{equation}
	Simplify this result by eccentricity $\mu=\Delta(T)/\alpha$, and the desired theorem is proved.
\end{proof}

With Theorem \ref{minsingle} in hand, we immediately have an error bound for signal recovery from $\varphi$-sub-Gaussian measurements.

\begin{theorem}\label{mcsr}
	Suppose $\mathbf{x}^*$ is a signal in $\mathbb{R}^n$, and $\mathbf{\Phi=(\varphi_1\cdots\varphi_m)^t}$ is an $m\times d$ random matrix with $\mathbf{\varphi_i}\,\,(i=1,\cdots,m)$ independent and identically distributed copied of $\mathbf{\varphi_0}$.  Random vector $\mathbf{\varphi_0}$ satisfies the four conditions, namely centering, nondegeneracy, $\varphi$-sub-Gaussian marginals, and low eccentricity. $\mathbf{y=\Phi x^*+e}$ is a vector of measurements in $\mathbb{R}^m$, where $\mathbf{e}$ is noise.  Suppose $||\mathbf{e}||\leq\eta$ and $\mathbf{x_\eta}$ is any solution to the problem (\ref{signalre}), then with probability more than $1-e^{-t^2/2}$, we have
	\begin{equation*}
		||\mathbf{x_\eta-x^*}||_2\leq\frac{2\eta}{\left(C\sqrt{m} \frac{\alpha^3}{\Delta^2(T)}-C'\gamma_{\varphi,p}(T, d)+C''\Delta(T)-\alpha t/3\right)\vee0},
	\end{equation*}
	with $C,C',C''$ some universal constants.
\end{theorem}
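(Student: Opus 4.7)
The plan is to combine the two deterministic/probabilistic ingredients already assembled in this subsection, namely Proposition \ref{sreb1} (Tropp's deterministic error bound for the convex program) and Theorem \ref{minsingle} (the lower bound on the minimum conic singular value in the $\varphi$-sub-Gaussian setting), and simply specialize the cone to the descent cone $\mathcal{D}(f,\mathbf{x}^*)$.

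First, I would invoke Proposition \ref{sreb1} directly: since $\mathbf{x}_\eta$ is any solution to the constrained program \eqref{signalre} with $\|\mathbf{e}\|_2 \leq \eta$, the deterministic inequality
\begin{equation*}
\|\mathbf{x}_\eta - \mathbf{x}^*\|_2 \leq \frac{2\eta}{\lambda_{\min}(\mathbf{\Phi};\mathcal{D}(f,\mathbf{x}^*))}
\end{equation*}
holds on the full probability space. This step is purely deterministic and uses only the feasibility of $\mathbf{x}^*$ (via $\|\mathbf{y}-\mathbf{\Phi}\mathbf{x}^*\|_2 = \|\mathbf{e}\|_2 \leq \eta$) together with the optimality of $\mathbf{x}_\eta$, both already packaged in Tropp's statement.

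Next, I would apply Theorem \ref{minsingle} with the specific cone $\mathbf{C} := \mathcal{D}(f,\mathbf{x}^*)$. The hypotheses on $\mathbf{\varphi}_0$ required there (centering, nondegeneracy, $\varphi$-sub-Gaussian marginals, low eccentricity) are precisely the four bulleted assumptions imposed in the statement of Theorem \ref{mcsr}, so they transfer verbatim. Consequently, on an event of probability at least $1-e^{-t^2/2}$,
\begin{equation*}
\lambda_{\min}(\mathbf{\Phi};\mathcal{D}(f,\mathbf{x}^*)) \geq C\sqrt{m}\,\frac{\alpha^3}{\Delta^2(T)} - C'\gamma_{\varphi,p}(T,d) + C''\Delta(T) - \alpha t/3.
\end{equation*}

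Finally, on the same event, I would substitute this lower bound into the deterministic inequality from the first step, taking the positive part in the denominator to guarantee the bound is meaningful (when the lower bound on $\lambda_{\min}$ is nonpositive the statement is vacuous). This yields exactly the claimed inequality. There is no genuine obstacle here beyond bookkeeping: the heavy lifting — the small-ball/Mendelson argument and the chaining control of the Rademacher width $W_m$ via $\gamma_{\varphi,p}(T,d)$ — was already carried out in the proof of Theorem \ref{minsingle}, so this final theorem is a one-line corollary modulo the intersection of events.
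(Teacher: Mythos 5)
Your proposal is correct and matches the paper's argument exactly: the paper also proves Theorem \ref{mcsr} by combining Proposition \ref{sreb1} with Theorem \ref{minsingle} applied to the descent cone $\mathcal{D}(f,\mathbf{x}^*)$. Nothing more is required.
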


\begin{proof}
	A combination of Proposition \ref{sreb1} and Theorem \ref{minsingle} completes the proof.
\end{proof}

\section{Discussions}
This work presents the first generic chaining argument for $\varphi$-sub-Gaussian processes as a generalization of Gaussian processes, and it has been demonstrated that the results obtained through the chaining method are superior to the Dudley bound in this space. We also attempt to extend the recent advances building on Talagrand's functional in this paper, for instance, the truncated version of the $\gamma$-functional attributed to Krahmer, Mendelson and Rauhut \cite{KMR}. Moreover, we attempt to verify the growth condition and explore partition schemes for the distribution-dependent Talagrand-type $\gamma$-functional. On the other hand, studying the lower bound for the expectation of the supremum of $\varphi$-sub-Gaussian processes is interesting. Recently, the so-called Bernoulli conjecture has been solved in  Bednorz and  Latała \cite{bernoulli}, and later,  Bednorz and Martynek \cite{cbernoulli} obtained a more general result.  These authors established the decomposition theorems for the lower bound of the Bernoulli process,  empirical processes, and so on.  After these great works,  it will be fascinating to study the decomposition theorem for the canonical process $\sum_{i\ge 1}t_i\xi_i$, where $\{\xi_i\}_{i\ge 1}$ is a sequence of $\varphi$-sub-Gaussian random variables.

\section *{Acknowledgments}

This work was supported by the National Natural Science Foundation of China (No. 12071257 and No. 11971267 );   National Key R$\&$D Program of China (No. 2018YFA0703900 and No. 2022YFA1006104); Shandong Provincial Natural Science Foundation (No. ZR2019ZD41).

\vskip3mm

\bibliographystyle{amsplain}

\end{document}